    \newcommand{\Dh}{\Delta_h}
    \newcommand{\nabh}{\nabla_{\! h}}
    \newcommand{\hf}{\nicefrac{1}{2}}
    \newcommand{\nrm}[1]{\left\| #1 \right\|}
    \newcommand{\cip}[2]{\left( #1 \middle| #2 \right)}
    \newcommand{\eipns}[2]{\left[ #1 \middle\| #2 \right]_{\rm ns}}
    \newcommand{\eipew}[2]{\left[ #1 \middle\| #2 \right]_{\rm ew}}
    \newcommand{\ciptwo}[2]{\left( #1 \middle\| #2 \right)}
    \def\x{\mbox{\boldmath $x$}}
    \newtheorem{rem}[theorem]{Remark}
\begin{document}

	\title{Convergence Analysis of a Second Order Convex Splitting Scheme for the Modified Phase Field Crystal Equation}
	
	\author{
A. Baskaran\thanks{Department of Mathematics; The University of California; Irvine, CA, USA ({\tt baskaran@math.uci.edu})}
	\and
J.S. Lowengrub\thanks{Department of Mathematics; The University of California; Irvine, CA, USA ({\tt lowengrb@math.uci.edu})}	
	\and
C. Wang\thanks{Mathematics Department; The University of Massachusetts; North Dartmouth, MA, USA ({\tt cwang1@umassd.edu})}
	\and
S.M. Wise\thanks{Mathematics Department; The University of Tennessee; Knoxville, TN, USA ({\tt swise@math.utk.edu})}
	}

	\maketitle

	\begin{abstract}
In this paper we provide a detailed convergence analysis for an unconditionally energy stable, second-order accurate convex splitting scheme for the Modified Phase Field Crystal equation, a generalized damped wave equation for which the usual Phase Field Crystal equation is a special degenerate case.  The fully discrete, fully second-order finite difference scheme in question was derived in a recent work \cite{baskaran12}. 
An introduction of a new variable $\psi$, corresponding to the temporal derivative of the phase variable $\phi$, could bring an accuracy reduction in the formal consistency estimate, because of the hyperbolic nature of the equation. A higher order 
consistency analysis by an asymptotic expansion is performed to overcome this difficulty. In turn, second order convergence in both time and space is established in a discrete $L^\infty \left(0,T; H^3\right)$ norm.  
	\end{abstract}

	\begin{keywords} 
phase field crystal, modified phase field crystal, pseudo energy, convex splitting, energy stability, second order convergence
	\end{keywords}

	\begin{AMS}
35G25, 65M06, 65M12
	\end{AMS}

\pagestyle{myheadings}
\thispagestyle{plain}
\markboth{A. BASKARAN,  J.S. LOWENGRUB, C. WANG, AND S.M. WISE}{SECOND ORDER SCHEME FOR THE MPFC EQUATION}

	\section{Introduction}

The modified phase field crystal (MPFC) equation is given by~\cite{stefanovic06}
	\begin{equation}
\beta\partial_{tt}\phi+\partial_t\phi = \Delta\left(\phi^3 + \alpha \phi +2\Delta\phi +\Delta^2\phi \right),
	\label{MPFC-eq}
	\end{equation}
where $\beta\ge 0$ and $\alpha>0$. Equation~(\ref{MPFC-eq}) is a generalized damped wave equation. The parabolic phase field crystal (PFC) equation is recovered in the degenerate case when $\beta=0$.  See~\cite{baskaran12, stefanovic06, stefanovic09, wang11a, wang10b} and references therein for the physical motivation for the MPFC equation.  The existence and uniqueness of global smooth solutions of the MPFC equation was established in our recent article~\cite{wang10b}, assuming that the initial data are smooth.  Very recently, we devised and implemented a second-order convex splitting scheme for the MPFC equation~\cite{baskaran12}.  The solver for the discrete equations was based on a nearly optimally efficient nonlinear multigrid method.  While we proved \emph{a priori} unconditional stability and  unconditional solvability results for the scheme, we did not perform a convergence analysis.  The goal of this paper is to provide a detailed convergence analysis of the second order convex-splitting scheme for MPFC equation (\ref{MPFC-eq}) proposed in \cite{baskaran12}.  To our knowledge no second order convergence analysis exists for scheme for either the PFC or the MPFC equation.

Because of the close relationship between the MPFC and PFC models, methods for the latter equation can be adapted and applied to the former.  See, for example, \cite{backofen07,cheng08,elder04,hu09,mellenthin08,wise09}  for some recent approximation methods specifically for the PFC model.  Methods specifically designed for the MPFC equation can be found in~\cite{baskaran12,galenko11,stefanovic09,wang10b,wang11a}.   Stefanovic~\emph{et al.},~\cite{stefanovic09} employed a semi-implicit finite difference discretization, with a multigrid algorithm for solving the algebraic equations.  They provide no numerical analysis for their scheme, which is significantly different from schemes we propose and analyze.  The MPFC scheme in~\cite{galenko11} is more or less the same as the first-order convex-splitting that we devised earlier in~\cite{wang10b, wang11a}.

The MPFC equation (\ref{MPFC-eq}) may be viewed as a perturbed gradient flow with respect to an energy.  Specifically, consider a dimensionless spatial energy of the form~\cite{elder02, swift77}
	\begin{equation}
E(\phi) = \int_\Omega\left\{\frac{1}{4}\phi^4+ \frac{\alpha}{2}\phi^2 -\left|\nabla\phi\right|^2 +\frac{1}{2}\left(\Delta\phi\right)^2 \right\}d{\bf x}  ,
	\label{energy}
	\end{equation} 
where $\phi:\Omega \subset \mathbb{R}^2 \to \mathbb{R}$ is the ``atom" density field, and $\alpha>0$ is a constant.  Suppose that $\Omega = (0,L_x)\times(0,L_y)$ and $\phi$ is periodic on $\Omega$.  Define $\mu$ to be the chemical potential with respect to $E$:
	\begin{equation}
\mu := \delta_\phi E = \phi^3 + ( 1 - \epsilon) \phi +2\Delta\phi +\Delta^2\phi  ,
	\label{chem-pot}
	\end{equation}
where $\delta_\phi E$ denotes the variational derivative with respect to $\phi$.  Clearly, the MPFC equation may be redefined as
	\begin{equation}
\beta\partial_{tt}\phi+\partial_t\phi = \Delta\mu  ,
	\label{dyn-conserve}
	\end{equation}
where $\beta\ge 0 $.  As mentioned, when $\beta =  0$ the PFC equation is recovered.  Herein we will restrict ourselves to the case that $\beta >0$ to avoid degeneracy. See the discussion in~\cite{wang10b} for some equations in the literature that are closely related to (\ref{MPFC-eq}). 

First, note that the energy (\ref{energy}) is not necessarily non-increasing in time along the solution trajectories of Eq.~(\ref{dyn-conserve}). However, solutions of the MPFC equation do dissipate a pseudo energy, as we show momentarily.  Also observe that Eq.~(\ref{dyn-conserve}) is not precisely a mass conservation equation due to the term $\beta\partial_{tt}\phi$.  However, it is easy to show that if $\displaystyle{\int_\Omega \partial_t\phi({\bf x},0) \, d{\bf x} = 0}$, then  $\displaystyle{\int_\Omega \partial_t\phi\, d{\bf x} = 0}$ for all time~\cite{wang10b,wang11a}.  Herein we assume $\partial_t\phi({\bf x},0) \equiv 0$, for simplicity, which trivially satisfies the condition for mass conservation.

We now recast the MPFC equation (\ref{dyn-conserve}) as the following system of equations:
	\begin{equation}
\beta\partial_t\psi =  \Delta\mu - \psi  , \quad \partial_t\phi = \psi  .
	\label{first-order-system}
	\end{equation}
And we introduce the pseudo energy
	\begin{equation}
{\mathcal E}(\phi,\psi) := E(\phi) + \frac{\beta}{2} \nrm{\psi}^2_{H^{-1}} .
	\label{energy-plus-time}
	\end{equation} 
See~\cite{wang11a,wang10b} for precise definitions of the $H^{-1}$ inner product and norm.  For well-definedness of the $H^{-1}$  norm, we requires that $\int_\Omega\psi\, d{\bf x} =0$.  This is the case since we use the initial data
	\begin{equation}
\psi(\, \cdot \, ,0) = \partial_t\phi(\, \cdot \, ,0) \equiv 0\qquad\mbox{in}\  \Omega  .
	\end{equation}
A simple calculation~\cite{wang11a,wang10b} shows that sufficiently regular solutions dissipate the pseudo-energy at the rate
	\begin{equation}
d_t{\mathcal E}	= -\left( \psi, \psi \right)_{H^{-1}} \le 0  .
	\label{pde-psuedo-energy-decrease}
	\end{equation}
In other words, the pseudo energy is non-increasing in time.  The primary motivation in the convex splitting framework is to design fully and semi-discrete schemes that mimic this pseudo-energy dissipation~\cite{baskaran12, wang11a, wang10b}.

The first order convex splitting scheme for (\ref{MPFC-eq}) was proposed and analyzed in a recent article~\cite{wang11a}, as we have mentioned. However, the extension to the second order convergence analysis is highly non-trivial, mainly due to an $O(s^2)$ numerical error between the centered difference of $\phi$ and the mid-point average of $\psi$. As observed in \cite{baskaran12}, the introduction of the variable $\psi$ greatly facilitates the numerical implementation.  However, if one is not careful, the above-mentioned $O (s^2)$ numerical error might seem to introduce a reduction of temporal accuracy, because of the second order time derivative involved in the equation.  To overcome this difficulty in the paper, we have to perform a higher order consistency analysis by an asymptotic expansion; as a result, the constructed approximate solution satisfies the numerical scheme with a higher order truncation error.  A projection of the exact solution onto the Fourier space is taken so that an optimal regularity requirement is obtained. 

Second order convergence analysis has always been very challenging  for nonlinear hyperbolic equation with second order temporal derivative involved.  The nonlinear error term must be carefully expanded, and a discrete Sobolev inequality is needed to bound the discrete $L^\infty$ and $W^{1,4}$ norms of the numerical error function. In addition, we need to take inner product with the error equation by the (discrete) time derivative of the numerical error, because of the hyperbolic nature of MPFC equation. In the end, a full second order convergence in a discrete $L^\infty \left(0,T; H^3\right)$ norm is established.  

In Sec.~\ref{sec-scheme} we define the second-order convex splitting scheme and restate some solvability and stability results from~\cite{baskaran12}.  In Sec.~\ref{sec-main-results} we present the convergence analysis for the second order scheme. We give some concluding remarks in Sec.~\ref{sec-conclusions}.  Moreover, some technical details of the forthcoming analysis are provided in two appendices.  In App.~\ref{app-tools} we give the finite difference background for the analysis, including our notation, some of the necessary difference operators, and the some useful inequalities. In a second appendix, App.~\ref{app-consistency}, we give the details of the consistency analyses related to our scheme.

	\section{The Second-Order Scheme and its Properties}
	\label{sec-scheme}
	
Here we redefine our second-order convex splitting scheme from~\cite{baskaran12}.  We also restate some of the unconditional solvability and stability results for this scheme.  We note that we used a different non-dimensional scaling of the MPFC equation~\eqref{MPFC-eq} in~\cite{baskaran12} than we do here, and some of the restated results below will be in a slightly modified form.  However, this difference is only superficial.  The reader is directed to App.~\ref{app-tools} for an introduction to the notation, as well as some of the standard tools from cell centered finite differences, that are used below.
	
	\subsection{Discrete Energy and the Convex-Splitting Scheme}

We first introduce a  fully discrete energy that is consistent with the continuous space energy (\ref{energy}).  In particular, define the discrete energy $F:{\mathcal C}_{\overline{m}\times\overline{n}}\rightarrow \mathbb{R}$ to be
	\begin{equation}
F(\phi) := \frac{1}{4}\nrm{\phi}_4^4+\frac{\alpha}{2} \nrm{\phi}_2^2 -\nrm{\nabh\phi}_2^2+ \frac{1}{2}\nrm{\Dh\phi}_2^2. \label{discrete-energy}
	\end{equation}
The discrete analogue to (\ref{energy-plus-time}) is 
	\begin{equation}\label{energy-plus-time-discrete}
{\mathcal F}\left(\phi,\psi\right) := F\left(\phi\right) + \frac{\beta}{2}\nrm{\psi}_{-1}^2,
	\end{equation}
defined for any $\phi\in{\mathcal C}_{\overline{m}\times\overline{n}}$ and any $\psi\in H$.  The norms above, including the  ``$-1$" norm, are defined in App.~\ref{app-tools}.

Note that if $\phi\in {\mathcal C}_{\overline{m}\times\overline{n}}$ is periodic, then it is easy to see that the energies 
	\begin{equation}
F_c(\phi) = \frac{1}{4} \nrm{\phi}_4^4 +\frac{\alpha}{2} \nrm{\phi}_2^2 
+ \frac{1}{2}\nrm{\Dh\phi}_2^2 \quad\mbox{and}\quad  F_e(\phi) =   \nrm{\nabh\phi}_2^2
	\end{equation}
are convex~\cite{wang11a,wise09}.  Hence $F$, as defined in (\ref{discrete-energy}), admits  the convex splitting $F = F_c-F_e$.  Our second-order scheme will exploit this decomposition of $F$.  Eyre~\cite{eyre98} is often credited with popularizing the idea that the numerical scheme should respect the convexity structure of the energy for the purposes of numerical stability and solvability.  His original scheme was first-order accurate in time and was restricted to non-conserved gradient flows.  But this approach has been extended to craft schemes for a number of gradient-flow equations of parabolic type; see for example~\cite{chen11, shen11, wang10a, wise09, wise10}.  The convex splitting framework was extended for the hyperbolic MPFC equation (\ref{MPFC-eq}) in~\cite{wang11a,wang10b}.  We extended the framework for second-order schemes in~\cite{baskaran12, hu09, shen11}.

The following second-order convex splitting scheme for the MPFC equation is from our recent paper~\cite{baskaran12}: given $\phi^{k-1},\, \phi^k, \, \psi^k \in {\mathcal C}_{\overline{m}\times\overline{n}}$ periodic, find $\phi^{k+1}, \, \psi^{k+1}, \, \mu^{k+1/2} \in {\mathcal C}_{\overline{m}\times\overline{n}}$ periodic such that
         \begin{eqnarray}
\beta\left(\psi^{k+1} - \psi^k\right) &=&  s\Delta_h \mu^{k+1/2} - s \psi^{k+1/2}  , 
    \label{s-2nd-1}
	\\
\mu^{k+1/2} &=&  \chi\left(\phi^{k+1},\phi^k \right) + \alpha \phi^{k+1/2}  + 2 \Delta_h \hat{\phi}^{k+1/2}   + \Delta_h^2 \phi^{k+1/2}  ,
	\label{s-2nd-2}
	\\
\phi^{k+1} - \phi^k &=&  s\psi^{k+1/2}  ,
	\label{s-2nd-3}
	\end{eqnarray}
where
	\begin{displaymath}
\phi^{k+\frac{1}{2}} := \frac{\phi^{k+1} + \phi^k }{2}, \quad \chi(\phi,\psi) :=  \frac{\phi^2+\psi^2}{2}\phi^{k+\frac{1}{2}} ,\quad \hat{\phi}^{k+\frac{1}{2}} := \frac{3 \phi^k - \phi^{k-1}}{2} .
	\end{displaymath}
It is obvious that $\chi(\phi,\phi) = \phi^3$.  In~\cite{baskaran12}, we used the initial data
	\begin{equation}
\phi^{-1} \equiv \phi^0  , \quad \psi^0 \equiv 0 .
	\end{equation}
Note that $\phi^{-1} \equiv \phi^0$ is an $O (s^2)$ approximation to the phase variable at the time ``ghost" point $k=-1$; such an initial error does not affect the order of numerical accuracy. 

By simple manipulations we obtain the following equivalent formulation~\cite{baskaran12}:
	\begin{eqnarray}
\left(1+\frac{2\beta}{s}\right) \phi^{k+1}- s \Dh\mu^{k+1/2} &=& \left(1+\frac{2\beta}{s}\right) \phi^k + 2\beta \psi^k  ,
	\label{full-disc-2nd-a-1}
	\\
\psi^{k+1} &=& \psi^k +\frac{2}{s}\left(\phi^{k+1}-\phi^k \right)  ,
	\label{full-disc-2nd-a-2}
	\end{eqnarray}
which shows that the equations may be decoupled.  In fact, we can obtain $\phi^{k+1}$ first by solving (\ref{full-disc-2nd-a-1}) and then update $\psi^{k+1}$ using (\ref{full-disc-2nd-a-2}).  Clearly the solvability of the scheme rests on the solvability of Eq.~(\ref{full-disc-2nd-a-1}).

	\subsection{Mass Conservation, Unique Solvability and Unconditional Energy Stability}

Mass conservation, unconditional unique solvability, and unconditional psuedo-energy stability were established in~\cite{baskaran12}.  We recall these facts here, though the reader is directed to the reference for details for the details.	  There are two modifications below from what is in~\cite{baskaran12}. First, our non-dimensional scaling of~\eqref{MPFC-eq} is slightly different, and, second, we use different initializations for our multistep, convex splitting  scheme.
 
	\begin{theorem}
The second order MPFC scheme (\ref{full-disc-2nd-a-1}) -- (\ref{full-disc-2nd-a-2}) is uniquely solvable for any time step-size $s>0$ and, moreover, solutions are mass-conservative, \emph{i.e}, $\ciptwo{\phi^k}{{\bf 1}} = \ciptwo{\phi^0}{{\bf 1}}$, for all $k = 1, 2, \ldots$.
	\end{theorem}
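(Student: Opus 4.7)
The plan has two parts: an induction establishing mass conservation, and a convex--minimization reformulation establishing unique solvability at each step.

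For mass conservation, I would induct on $k$, maintaining the auxiliary invariant $\ciptwo{\psi^k}{{\bf 1}} = 0$, which holds at $k=0$ by the initialization $\psi^0 \equiv 0$. Pairing (\ref{full-disc-2nd-a-1}) with the constant grid function ${\bf 1}$ via the discrete inner product and using discrete summation by parts (so that $s\ciptwo{\Dh \mu^{k+1/2}}{{\bf 1}} = 0$ by periodicity) yields
\[
\left(1 + \tfrac{2\beta}{s}\right)\ciptwo{\phi^{k+1}}{{\bf 1}} = \left(1 + \tfrac{2\beta}{s}\right)\ciptwo{\phi^k}{{\bf 1}} + 2\beta\,\ciptwo{\psi^k}{{\bf 1}} = \left(1+\tfrac{2\beta}{s}\right)\ciptwo{\phi^k}{{\bf 1}},
\]
and hence $\ciptwo{\phi^{k+1}}{{\bf 1}} = \ciptwo{\phi^k}{{\bf 1}}$. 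Pairing (\ref{full-disc-2nd-a-2}) with ${\bf 1}$ then gives $\ciptwo{\psi^{k+1}}{{\bf 1}} = 0$, closing the induction and simultaneously ensuring that $\nrm{\psi^k}_{-1}$ is well-defined at every step.

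For unique solvability of (\ref{full-disc-2nd-a-1}), I would recast it as the Euler--Lagrange equation of a strictly convex functional on the finite-dimensional affine admissible set $\mathcal{S} := \{\phi \in \mathcal{C}_{\overline{m}\times\overline{n}} : \ciptwo{\phi}{{\bf 1}} = \ciptwo{\phi^0}{{\bf 1}}\}$. A key observation is that the non-convex piece $-\nrm{\nabh\phi}_2^2$ of $F$ enters the scheme only through the term $2\Dh \hat{\phi}^{k+1/2}$, with $\hat{\phi}^{k+1/2} = (3\phi^k - \phi^{k-1})/2$ fully explicit, so it contributes only a linear source and does not threaten convexity. I would define
\[
\mathcal{J}(\phi) := \tfrac{1}{2s}\left(1+\tfrac{2\beta}{s}\right)\nrm{\phi - f}_{-1}^2 + \tfrac{\alpha}{4}\nrm{\phi}_2^2 + \tfrac{1}{4}\nrm{\Dh \phi}_2^2 + \mathcal{G}(\phi; \phi^k) + \mathcal{L}(\phi),
\]
where $f := \phi^k + \tfrac{2\beta/s}{1+2\beta/s}\psi^k$ (which lies in $\mathcal{S}$ by the mass-conservation step), $\mathcal{L}$ collects the linear contributions from the explicit data, and $\mathcal{G}(\,\cdot\,;\phi^k)$ is the grid-pointwise primitive, in the first argument, of $\chi(\phi,\phi^k) = \tfrac{\phi^2+(\phi^k)^2}{2}\cdot\tfrac{\phi+\phi^k}{2}$. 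Taking variations in the mean-zero tangent space of $\mathcal{S}$ recovers (\ref{full-disc-2nd-a-1}).

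The main obstacle is verifying strict convexity of $\mathcal{J}$, and the only delicate piece is $\mathcal{G}$. A direct differentiation gives
\[
\frac{\partial}{\partial\phi}\chi(\phi,\phi^k) = \tfrac{1}{4}\left(3\phi^2 + 2\phi\,\phi^k + (\phi^k)^2\right),
\]
a quadratic in $\phi$ with positive leading coefficient and discriminant $4(\phi^k)^2 - 12(\phi^k)^2 = -8(\phi^k)^2 \le 0$, hence pointwise non-negative; this shows $\mathcal{G}(\,\cdot\,;\phi^k)$ is convex. Combined with strict convexity of the three quadratic pieces on the mean-zero tangent space (where $\nrm{\cdot}_{-1}$ is well-defined and positive definite), $\mathcal{J}$ is strictly convex and coercive on $\mathcal{S}$ and admits a unique minimizer, which is the unique $\phi^{k+1}$ solving (\ref{full-disc-2nd-a-1}); the value $\psi^{k+1}$ is then uniquely and explicitly determined by (\ref{full-disc-2nd-a-2}).
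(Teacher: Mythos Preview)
Your argument is correct and is precisely the standard convex-splitting solvability proof; the paper does not give an in-text proof of this theorem but cites~\cite{baskaran12}, where essentially the same variational/minimization argument is used. One cosmetic slip: from \eqref{full-disc-2nd-a-1} the correct source is $f = \phi^k + \dfrac{2\beta}{1+2\beta/s}\,\psi^k$ (your expression is off by a factor of $s$), but since you only use $\ciptwo{\psi^k}{\mathbf 1}=0$ to place $f\in\mathcal S$, this does not affect the argument.
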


Before we state the next result, which is proved in~\cite{baskaran12}, we introduce a third fully discrete energy: for each time step $k \ge 1$, set
	\begin{equation}
\tilde{\mathcal{F}}\left(\phi^k,\phi^{k-1},\psi^k\right) := \mathcal{F}\left(\phi^k,\psi^k\right) +\frac{1}{2}\nrm{\nabh\left(\phi^k-\phi^{k-1}\right)}_2^2 \ .
	\end{equation}

	\begin{theorem}
	\label{energy-stable}
The second order MPFC scheme (\ref{full-disc-2nd-a-1}) -- (\ref{full-disc-2nd-a-2}) (or equivalently (\ref{s-2nd-1}) -- (\ref{s-2nd-3})) is unconditionally energy stable.  In particular, suppose that  $\phi^k$, $\psi^k$, $\phi^{k-1}\in \mathcal{C}_{\overline{m}\times\overline{n}}$ are periodic, and that $\phi^{k+1}$, $\mu^{k+\hf}$, $\psi^{k+1} \in \mathcal{C}_{\overline{m}\times\overline{n}}$ is a periodic solution triple to (\ref{s-2nd-1}) -- (\ref{s-2nd-3}).  Then, for any $k\ge 0$, 
	\begin{eqnarray}
\tilde{\mathcal{F}}\left(\phi^{k+1},\phi^{k},\psi^{k+1}\right) + s\nrm{\psi^{k+\hf}}_{-1}^2 + \frac{s^4}{2}\nrm{\nabh\left(D^2_s\phi^k\right)}_2^2 = \tilde{\mathcal{F}}\left(\phi^k,\phi^{k-1},\psi^k\right) ,
	\nonumber
	\\
&&
	\label{primary-stability-result}
	\end{eqnarray}
where
	\begin{equation}
D^2_s\phi^k := \frac{1}{s^2}\left(\phi^{k+1}-2\phi^k+\phi^{k-1} \right)  .
	\end{equation}
	\end{theorem}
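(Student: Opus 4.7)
The plan is to establish \eqref{primary-stability-result} as a pure equality via the convex-splitting energy argument, pairing the scheme equations with suitable test functions; unconditional stability then follows immediately from the manifest non-negativity of the residual terms. First I would test \eqref{s-2nd-1} in the discrete $L^2$ inner product against $(-\Dh)^{-1}\psi^{k+\hf}$, which is well-defined because $\psi^{k+\hf}$ has mean zero by the mass-conservation property already established. The left side telescopes via the Crank--Nicolson identity into $\frac{\beta}{2}\bigl(\nrm{\psi^{k+1}}_{-1}^2 - \nrm{\psi^k}_{-1}^2\bigr)$, while the right side yields the dissipation $-s\nrm{\psi^{k+\hf}}_{-1}^2$ together with a coupling term $-s\ciptwo{\mu^{k+\hf}}{\psi^{k+\hf}}$. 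Invoking \eqref{s-2nd-3} to replace $s\psi^{k+\hf}$ by $\phi^{k+1}-\phi^k$, the coupling collapses to $-\ciptwo{\mu^{k+\hf}}{\phi^{k+1}-\phi^k}$, which is the discrete variation of the spatial energy.

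Next I would substitute \eqref{s-2nd-2} for $\mu^{k+\hf}$ and treat its four contributions separately. Three of them behave in the textbook Crank--Nicolson manner: the nonlinear term telescopes via the pointwise factorization $(a^2+b^2)(a+b)(a-b) = a^4 - b^4$ to give $\frac{1}{4}\bigl(\nrm{\phi^{k+1}}_4^4 - \nrm{\phi^k}_4^4\bigr)$; the $\alpha$ term yields $\frac{\alpha}{2}\bigl(\nrm{\phi^{k+1}}_2^2 - \nrm{\phi^k}_2^2\bigr)$; and the biharmonic term yields $\frac{1}{2}\bigl(\nrm{\Dh\phi^{k+1}}_2^2 - \nrm{\Dh\phi^k}_2^2\bigr)$ after summation by parts. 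Together these assemble into $F_c(\phi^{k+1}) - F_c(\phi^k)$, the telescope of the convex part of the discrete energy.

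The main obstacle is the concave contribution $\ciptwo{2\Dh\hat{\phi}^{k+\hf}}{\phi^{k+1}-\phi^k}$, where the explicit extrapolation $\hat{\phi}^{k+\hf} = (3\phi^k-\phi^{k-1})/2$ would at first sight break the telescoping structure. The resolution hinges on the algebraic identity
\[ \hat{\phi}^{k+\hf} = \phi^{k+\hf} - \frac{s^2}{2}\,D_s^2\phi^k , \]
so that the term splits, after summation by parts, into the clean midpoint piece $-\bigl(\nrm{\nabh\phi^{k+1}}_2^2 - \nrm{\nabh\phi^k}_2^2\bigr)$ plus a correction $s^2\ciptwo{\nabh D_s^2\phi^k}{\nabh(\phi^{k+1}-\phi^k)}$. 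Writing $s^2 D_s^2\phi^k = (\phi^{k+1}-\phi^k) - (\phi^k-\phi^{k-1})$ and applying the identity $(u, u-v) = \frac{1}{2}(\nrm{u}^2 - \nrm{v}^2 + \nrm{u-v}^2)$ to $u=\nabh(\phi^{k+1}-\phi^k)$, $v=\nabh(\phi^k-\phi^{k-1})$, the correction becomes
\[ \tfrac{1}{2}\nrm{\nabh(\phi^{k+1}-\phi^k)}_2^2 - \tfrac{1}{2}\nrm{\nabh(\phi^k-\phi^{k-1})}_2^2 + \tfrac{s^4}{2}\nrm{\nabh D_s^2\phi^k}_2^2 . \]
The first two pieces furnish precisely the telescoping of the extra gradient-jump term distinguishing $\tilde{\mathcal F}$ from $\mathcal F$, while the third is the residual appearing on the left side of \eqref{primary-stability-result}. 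Collecting all contributions yields the claimed identity, and unconditional stability follows from the non-negativity of the two residual terms, with no restriction on $s$.
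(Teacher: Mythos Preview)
Your argument is correct and is exactly the standard convex-splitting energy identity one expects here; the paper itself does not reproduce a proof but cites \cite{baskaran12}, and your computation is precisely the canonical one that appears there. The key algebraic observation you isolate---rewriting $\hat{\phi}^{k+\hf}=\phi^{k+\hf}-\tfrac{s^2}{2}D_s^2\phi^k$ and then applying $(u,u-v)=\tfrac12(\nrm{u}^2-\nrm{v}^2+\nrm{u-v}^2)$---is the heart of the matter, and your bookkeeping of the remaining Crank--Nicolson telescopes is accurate.
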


This next result follows by summing Eq.~(\ref{primary-stability-result}) from $k = 0$ to $k=\ell-1$.

	\begin{corollary}
With the same assumptions as in Thm.~\ref{energy-stable} we have
	\begin{eqnarray}
\tilde{\mathcal{F}}\left(\phi^\ell,\phi^{\ell-1},\psi^\ell\right) +s\sum_{k = 0}^{\ell-1}\nrm{\psi^{k+\hf}}_{-1}^2 +\frac{s^4}{2}\sum_{k = 0}^{\ell-1}\nrm{\nabh\left(D^2_s\phi^k\right)}_2^2 &=& \tilde{\mathcal{F}}\left(\phi^0,\phi^{-1},\psi^0\right) 
	\nonumber
	\\
& =& F\left(\phi^0\right).
	\end{eqnarray}
	\end{corollary}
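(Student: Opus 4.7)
The proof is essentially an exercise in telescoping, together with the specific initial conditions imposed on $\phi^{-1}$ and $\psi^0$. Since Theorem~\ref{energy-stable} gives the single-step identity
\[
\tilde{\mathcal{F}}\!\left(\phi^{k+1},\phi^{k},\psi^{k+1}\right) - \tilde{\mathcal{F}}\!\left(\phi^k,\phi^{k-1},\psi^k\right) = -s\nrm{\psi^{k+\hf}}_{-1}^2 - \frac{s^4}{2}\nrm{\nabh\left(D^2_s\phi^k\right)}_2^2,
\]
the plan is simply to sum both sides over $k = 0, 1, \ldots, \ell-1$. The left-hand side collapses as a telescoping sum, yielding $\tilde{\mathcal{F}}(\phi^\ell,\phi^{\ell-1},\psi^\ell) - \tilde{\mathcal{F}}(\phi^0,\phi^{-1},\psi^0)$. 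Rearranging gives the first equality in the stated identity directly, with the two nonnegative sum terms appearing on the left.

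For the second equality, I would substitute the prescribed initial data $\phi^{-1} \equiv \phi^0$ and $\psi^0 \equiv 0$ into the definition of $\tilde{\mathcal{F}}$. By construction,
\[
\tilde{\mathcal{F}}\!\left(\phi^0,\phi^{-1},\psi^0\right) = \mathcal{F}\!\left(\phi^0,\psi^0\right) + \frac{1}{2}\nrm{\nabh\left(\phi^0-\phi^{-1}\right)}_2^2,
\]
and the last term vanishes because $\phi^0-\phi^{-1} \equiv 0$. Expanding $\mathcal{F}(\phi^0,\psi^0) = F(\phi^0) + \frac{\beta}{2}\nrm{\psi^0}_{-1}^2$ and using $\psi^0 \equiv 0$ reduces the initial discrete energy to exactly $F(\phi^0)$, completing the second equality.

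There is no real obstacle here: the statement is a direct corollary and does not require any new estimates beyond what Theorem~\ref{energy-stable} already delivers. The only point worth checking carefully is that $\psi^0 \equiv 0$ has zero mean so that $\nrm{\psi^0}_{-1}$ is well-defined (trivially true), and that the telescoping identity is legitimate for all $k \ge 0$ — which is precisely the range covered by Theorem~\ref{energy-stable}. Thus the result follows in two short lines.
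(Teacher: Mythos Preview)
Your proposal is correct and matches the paper's approach exactly: the paper states only that the result follows by summing Eq.~(\ref{primary-stability-result}) from $k=0$ to $k=\ell-1$, which is precisely the telescoping argument you describe, and the second equality follows from the initial data $\phi^{-1}\equiv\phi^0$, $\psi^0\equiv 0$ as you note.
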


Using Lems.~\ref{sobolev-2d-final} and \ref{embedding}, we find
	\begin{lemma}
	\label{H2-stable}
Suppose that $\phi \in{\mathcal C}_{\overline{m}\times\overline{n}}$ is periodic. Then the following estimates hold:
	\begin{eqnarray}
F(\phi) &\ge& C_5 \nrm{\phi}^2_{2,2} - \frac{L_xL_y}{4}  , \label{H2-energy-estimate}
	\\
F(\phi) &\ge& C_6 \nrm{\phi}_\infty^2 - \frac{L_xL_y}{4} , \quad C_6 := \frac{C_5}{C_2}  , \label{infty-energy-estimate}
	\\
F(\phi) &\ge& C_7 \nrm{\nabh\phi}_4^2 - \frac{L_xL_y}{4} , \quad C_7 := \frac{C_5}{C_4}  , \label{W14-energy-estimate}
	\end{eqnarray}
where $C_5>0$ and only depends upon $\alpha$.
	\end{lemma}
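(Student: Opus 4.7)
My plan is to prove \eqref{H2-energy-estimate} first, and then deduce \eqref{infty-energy-estimate} and \eqref{W14-energy-estimate} as immediate corollaries using the discrete Sobolev embeddings provided by Lems.~\ref{sobolev-2d-final} and \ref{embedding}. The only quantity to be carefully controlled is the indefinite term $-\nrm{\nabh\phi}_2^2$ appearing in $F$.

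\textbf{Step 1 (quartic dominates the mass term).} Apply the pointwise inequality $\frac{1}{4}x^4 + \frac{1}{4} \ge \frac{1}{2} x^2$ (equivalent to $(x^2-1)^2\ge 0$) at every grid point and sum. Since the total volume of the periodic cell is $L_xL_y$, this yields
\begin{equation*}
\frac{1}{4}\nrm{\phi}_4^4 + \frac{L_xL_y}{4} \ \ge\ \frac{1}{2}\nrm{\phi}_2^2.
\end{equation*}

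\textbf{Step 2 (absorbing the negative gradient term).} Using summation by parts for periodic grid functions, $\nrm{\nabh\phi}_2^2 = -\ciptwo{\phi}{\Dh\phi}$, followed by Cauchy--Schwarz and Young's inequality with a free parameter $\eta>0$,
\begin{equation*}
\nrm{\nabh\phi}_2^2 \ \le\ \eta\,\nrm{\phi}_2^2 + \frac{1}{4\eta}\nrm{\Dh\phi}_2^2.
\end{equation*}
Combining with Step~1 gives
\begin{equation*}
F(\phi) + \frac{L_xL_y}{4} \ \ge\ \left(\frac{1+\alpha}{2}-\eta\right)\nrm{\phi}_2^2 + \left(\frac{1}{2}-\frac{1}{4\eta}\right)\nrm{\Dh\phi}_2^2.
\end{equation*}
Since $\alpha>0$, I can pick $\eta$ in the nonempty interval $(\frac{1}{2},\frac{1+\alpha}{2})$, making both coefficients strictly positive and depending only on $\alpha$. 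This produces an estimate of the form $F(\phi)+\frac{L_xL_y}{4} \ge c(\alpha)\bigl(\nrm{\phi}_2^2+\nrm{\Dh\phi}_2^2\bigr)$.

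\textbf{Step 3 (passage to the discrete $H^2$ norm).} Applying the same summation-by-parts bound with $\eta=\frac{1}{2}$ gives $\nrm{\nabh\phi}_2^2 \le \frac{1}{2}\bigl(\nrm{\phi}_2^2+\nrm{\Dh\phi}_2^2\bigr)$, so $\nrm{\phi}_{2,2}^2 \le \frac{3}{2}\bigl(\nrm{\phi}_2^2+\nrm{\Dh\phi}_2^2\bigr)$. Combining with Step~2 yields \eqref{H2-energy-estimate} with $C_5=\frac{2}{3}c(\alpha)$, which depends only on $\alpha$. The pointwise estimate \eqref{infty-energy-estimate} then follows by inserting the discrete Sobolev inequality $\nrm{\phi}_\infty^2 \le C_2\nrm{\phi}_{2,2}^2$ from Lem.~\ref{sobolev-2d-final}, and \eqref{W14-energy-estimate} follows from the analogous discrete embedding $\nrm{\nabh\phi}_4^2 \le C_4\nrm{\phi}_{2,2}^2$ furnished by Lem.~\ref{embedding}.

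The only delicate point is the parameter selection in Step~2: the argument requires the interval $(\frac{1}{2},\frac{1+\alpha}{2})$ to be nonempty, which is precisely where the hypothesis $\alpha>0$ (and no intrinsic smallness assumption) enters. Everything else is a straightforward combination of discrete Young's inequality, summation by parts, and the stated embedding lemmas.
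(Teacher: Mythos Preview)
Your argument is correct and is precisely the approach the paper has in mind: the paper does not give a detailed proof but simply writes ``Using Lems.~\ref{sobolev-2d-final} and \ref{embedding}, we find'' before stating the lemma, so you have filled in exactly the intended details --- the pointwise quartic bound, the interpolation $\nrm{\nabh\phi}_2^2\le \eta\nrm{\phi}_2^2+\tfrac{1}{4\eta}\nrm{\Dh\phi}_2^2$, and then the two embeddings. Two cosmetic points: in Step~2 the identity should carry the $h^2$ weight, $\nrm{\nabh\phi}_2^2=-h^2\ciptwo{\phi}{\Dh\phi}$, and the embedding from Lem.~\ref{embedding} reads $\nrm{\nabh\phi}_4\le C_4\nrm{\Dh\phi}_2$, so upon squaring the constant in \eqref{W14-energy-estimate} is naturally $C_5/C_4^2$ rather than $C_5/C_4$; neither affects the validity of the proof.
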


Using the last two results and the simple estimate
	\begin{equation} 
F\left(\phi^k\right) \le \tilde{\mathcal{F}}\left(\phi^k, \phi^{k-1}, \psi^k\right)  ,
	\label{energy-stab}
	\end{equation} 
for any $0\ge 1$, we obtain
	\begin{theorem}
Let $\Phi$ be a sufficiently regular, periodic solution to \eqref{MPFC-eq} on $\Omega_T = (0,L_x)\times (0\times L_y)\times(0,T)$, with $\partial_t\Phi(\, \cdot\, ,\, \cdot\, ,0) \equiv 0$ and $\phi^0_{i,j} = \phi^{-1}_{i,j} := \Phi(p_i,p_j,0)$, $\psi^0 \equiv 0$. Suppose $E$ is the continuous energy (\ref{energy}) and $F$ is the discrete energy (\ref{discrete-energy}).  Let $\phi^k_{i,j}\in {\mathcal C}_{\overline{m}\times\overline{n}}$ be the $k^{\rm th}$ periodic solution of  (\ref{full-disc-2nd-a-1}) and (\ref{full-disc-2nd-a-2}) for $1\le k\le \ell$. Set 
	\begin{equation}
M_0 := E\big(\Phi(\, \cdot\, ,\, \cdot\, ,0)\big) + C_8 L_xL_y  ,
	\end{equation}
where $C_8>0$ is a constant that does not depend on either $s$ or $h$.  Then we have the following estimates:
	\begin{eqnarray}
s\sum_{k = 0}^{\ell-1}\nrm{\psi^{k+\hf}}_{-1}^2 + \frac{s^4}{2}\sum_{k = 0}^{\ell-1}\nrm{\nabh\left(D^2_s\phi^k\right)}_2^2  &\le& M_0  ,
	\\	
\max_{0\le k\le \ell}\nrm{\phi^k}_{2,2} &\le& \sqrt{\frac{M_0}{C_5}} \ =: C_9 , \label{h2-uniform-bound}
	\\
\max_{0\le k\le \ell}\nrm{\phi^k}_\infty &\le& \sqrt{\frac{M_0}{C_6}} \ =: C_{10}  , \label{infty-uniform-bound}
	\\
\max_{0\le k\le \ell}\nrm{\nabh\phi^k}_4 &\le& \sqrt{\frac{M_0}{C_7}} \ =: C_{11}  . \label{W14-uniform-bound}
	\end{eqnarray} 
	\end{theorem}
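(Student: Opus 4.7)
The plan is to combine the discrete dissipation identity from the Corollary with the coercivity bounds of Lem.~\ref{H2-stable}. First I would observe that, with $\phi^{-1}\equiv\phi^0$ and $\psi^0\equiv 0$, the initial pseudo-energy collapses:
$\tilde{\mathcal F}(\phi^0,\phi^{-1},\psi^0) = F(\phi^0) + \frac{\beta}{2}\nrm{\psi^0}_{-1}^2 + \frac{1}{2}\nrm{\nabh(\phi^0-\phi^{-1})}_2^2 = F(\phi^0)$,
so this is the right-hand side of the identity supplied by the Corollary.

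Next I would compare the discrete initial energy $F(\phi^0)$ against the continuous energy $E(\Phi(\,\cdot\,,\,\cdot\,,0))$. Because $\Phi(\,\cdot\,,\,\cdot\,,0)$ is smooth and $\phi^0_{i,j} = \Phi(p_i,p_j,0)$ is its grid sampling, a standard quadrature/consistency argument (viewing the sums in (\ref{discrete-energy}) as midpoint rules approximating the integrals in (\ref{energy}), combined with a summation-by-parts identification of $\nrm{\nabh\phi^0}_2$ and $\nrm{\Dh\phi^0}_2$ with discrete approximations of $\nrm{\nabla\Phi(\,\cdot\,,\,\cdot\,,0)}_{L^2}$ and $\nrm{\Delta\Phi(\,\cdot\,,\,\cdot\,,0)}_{L^2}$) yields $F(\phi^0) \le E(\Phi(\,\cdot\,,\,\cdot\,,0)) + C\, L_xL_y$, for some $C$ independent of $s$ and $h$. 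Choosing $C_8$ large enough to exceed $C + \hf$, so that it also absorbs the $L_xL_y/4$ losses in each of (\ref{H2-energy-estimate}), (\ref{infty-energy-estimate}), and (\ref{W14-energy-estimate}), gives $F(\phi^0) + \tfrac{L_xL_y}{4} \le M_0$.

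With these preliminaries the four asserted estimates follow directly. Dropping the nonnegative term $\tilde{\mathcal F}(\phi^\ell,\phi^{\ell-1},\psi^\ell)$ in the Corollary produces the summation estimate
$s\sum_{k=0}^{\ell-1}\nrm{\psi^{k+\hf}}_{-1}^2 + \frac{s^4}{2}\sum_{k=0}^{\ell-1}\nrm{\nabh(D^2_s\phi^k)}_2^2 \le F(\phi^0) \le M_0$.
For each of the three pointwise-in-time bounds, the chain $F(\phi^k) \le \tilde{\mathcal F}(\phi^k,\phi^{k-1},\psi^k) \le \tilde{\mathcal F}(\phi^0,\phi^{-1},\psi^0) = F(\phi^0) \le M_0$ (obtained by combining (\ref{energy-stab}) with the Corollary) is paired in turn with (\ref{H2-energy-estimate}), (\ref{infty-energy-estimate}), and (\ref{W14-energy-estimate}), and the square root is extracted to recover (\ref{h2-uniform-bound}), (\ref{infty-uniform-bound}), and (\ref{W14-uniform-bound}) with constants $C_9$, $C_{10}$, $C_{11}$ respectively.

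The main obstacle is purely bookkeeping: verifying that a single constant $C_8$ can be selected once and for all to absorb both the quadrature error in passing from $F(\phi^0)$ to $E(\Phi(\,\cdot\,,\,\cdot\,,0))$ and the additive $L_xL_y/4$ terms coming from each coercivity inequality. No new analytical estimate is needed beyond this; the proof is an assembly of results already established in the excerpt, with the Corollary providing the dynamics and Lem.~\ref{H2-stable} converting the energy bound into the three norm bounds.
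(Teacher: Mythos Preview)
Your approach is essentially the paper's: the theorem is stated there as a direct consequence of the Corollary, Lem.~\ref{H2-stable}, and the estimate $F(\phi^k)\le\tilde{\mathcal F}(\phi^k,\phi^{k-1},\psi^k)$, with no further detail given, and you have filled in precisely those steps.

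One small slip: you write that $\tilde{\mathcal F}(\phi^\ell,\phi^{\ell-1},\psi^\ell)$ is nonnegative and can simply be dropped. That is not quite true, since $F$ contains the term $-\nrm{\nabh\phi}_2^2$ and can be negative. What you do have from (\ref{H2-energy-estimate}) is $\tilde{\mathcal F}(\phi^\ell,\phi^{\ell-1},\psi^\ell)\ge F(\phi^\ell)\ge -\tfrac{L_xL_y}{4}$, so the summation estimate becomes $s\sum\nrm{\psi^{k+\hf}}_{-1}^2+\tfrac{s^4}{2}\sum\nrm{\nabh(D_s^2\phi^k)}_2^2\le F(\phi^0)+\tfrac{L_xL_y}{4}$, and this extra $\tfrac{L_xL_y}{4}$ is absorbed into $M_0$ exactly as you already arranged for the other three bounds. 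With that correction the argument is complete.
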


	\begin{theorem}
Suppose that $\Phi(x,y,t)$ is a periodic solution of the MPFC equation (\ref{dyn-conserve}), with the regularity assumed in Thm.~\ref{thm-local-truncation-error} below, such that $\partial_t\Phi(x,y,0) = 0$.  Then we have the following estimates:
	\begin{eqnarray}
\nrm{\Phi}_{L^\infty\left(0,T;H^2\left(\Omega\right)\right)} &\le& \sqrt{ C_{12}\left( E\left(\Phi\left(x,y,0\right)\right) +\frac{L_xL_y}{4}\right)} =: C_{13} \ ,
	\\
\nrm{\Phi}_{L^\infty\left(0,T;L^\infty\left(\Omega\right)\right)} &\le& \sqrt{ C_{14}\left( E\left(\Phi\left(x,y,0\right)\right) +\frac{L_xL_y}{4}\right)} =: C_{15} \ ,
	\\
\nrm{\Phi}_{L^\infty\left(0,T;W^{1,4}\left(\Omega\right)\right)} &\le&   \sqrt{ C_{16}\left( E\left(\Phi\left(x,y,0\right)\right) +\frac{L_xL_y}{4}\right)} =: C_{17} \ ,
	\end{eqnarray}
for any $T\ge 0$, where $C_{12},\, C_{14},\, C_{16} >0$ are constants that are independent of $T$.
	\end{theorem}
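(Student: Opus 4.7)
The plan is to establish a continuous analogue of the machinery that yielded the discrete bounds (\ref{h2-uniform-bound})--(\ref{W14-uniform-bound}), using the PDE-level pseudo-energy dissipation (\ref{pde-psuedo-energy-decrease}) in place of the discrete one.

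\textbf{Step 1: Pseudo-energy dissipation at the continuous level.} Integrating (\ref{pde-psuedo-energy-decrease}) in time and using the initial condition $\psi(\,\cdot\,,0)=\partial_t\Phi(\,\cdot\,,0)\equiv 0$, I obtain
\begin{equation*}
E\bigl(\Phi(\,\cdot\,,t)\bigr) + \frac{\beta}{2}\nrm{\psi(\,\cdot\,,t)}_{H^{-1}}^2 \;=\; {\mathcal E}\bigl(\Phi(\,\cdot\,,t),\psi(\,\cdot\,,t)\bigr) \;\le\; {\mathcal E}\bigl(\Phi(\,\cdot\,,0),0\bigr) \;=\; E\bigl(\Phi(\,\cdot\,,0)\bigr),
\end{equation*}
so that $E(\Phi(\,\cdot\,,t)) \le E(\Phi(\,\cdot\,,0))$ uniformly for $t\in[0,T]$.

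\textbf{Step 2: Continuous analogue of Lem.~\ref{H2-stable}.} I need lower bounds of the form $E(\phi) \ge C\nrm{\phi}_{H^2}^2 - C' L_xL_y$, and similarly for $\nrm{\phi}_\infty$ and $\nrm{\nabla\phi}_4$. The only indefinite contribution is $-\nrm{\nabla\phi}_{L^2}^2$; for periodic $\phi$, integration by parts yields $\nrm{\nabla\phi}_{L^2}^2 = -(\phi,\Delta\phi)_{L^2}$, so by Cauchy--Schwarz and Young's inequality,
\begin{equation*}
\nrm{\nabla\phi}_{L^2}^2 \;\le\; \tfrac{1}{4}\nrm{\Delta\phi}_{L^2}^2 + \nrm{\phi}_{L^2}^2 .
\end{equation*}
Absorbing this into the $\tfrac12\nrm{\Delta\phi}_{L^2}^2$ term and the quartic, the integrand is bounded below by $\tfrac14\phi^4 + (\tfrac{\alpha}{2}-1)\phi^2 + \tfrac14(\Delta\phi)^2$; completing the square on the $\phi$-dependent part shows
\begin{equation*}
\tfrac14\phi^4 + \bigl(\tfrac{\alpha}{2}-1\bigr)\phi^2 \;\ge\; -\tfrac14 L_xL_y \cdot \bigl(\max(2-\alpha,0)\bigr)^2
\end{equation*}
pointwise (up to adjusting the generic constant). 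This yields $E(\phi) \ge C\bigl(\nrm{\phi}_{L^2}^2 + \nrm{\Delta\phi}_{L^2}^2\bigr) - C' L_xL_y \ge C\nrm{\phi}_{H^2}^2 - C'L_xL_y$ after one more use of the Poincaré-type bound $\nrm{\nabla\phi}_{L^2}^2 \le \tfrac12(\nrm{\phi}_{L^2}^2 + \nrm{\Delta\phi}_{L^2}^2)$.

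\textbf{Step 3: Sobolev embeddings in 2D.} Combining Steps 1 and 2 gives
\begin{equation*}
C\,\nrm{\Phi(\,\cdot\,,t)}_{H^2}^2 - \tfrac14 L_xL_y \cdot C'' \;\le\; E\bigl(\Phi(\,\cdot\,,0)\bigr),
\end{equation*}
which is the $H^2$ bound with a suitable $C_{12}$. The $L^\infty$ and $W^{1,4}$ bounds then follow from the 2D Sobolev embeddings $H^2(\Omega)\hookrightarrow L^\infty(\Omega)$ and $H^2(\Omega)\hookrightarrow W^{1,4}(\Omega)$ (on the periodic cell), with the embedding constants absorbed into $C_{14}$ and $C_{16}$. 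Note all constants are $T$-independent because the energy bound in Step 1 is.

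\textbf{Main obstacle.} Nothing is genuinely hard here; the only nontrivial point is managing the negative $-\nrm{\nabla\phi}_{L^2}^2$ term in the energy, which must be tamed by interpolating between the $\phi^4$, $\phi^2$, and $(\Delta\phi)^2$ contributions with just the right Young's inequality coefficients so that a strictly positive multiple of each of $\nrm{\phi}_{L^2}^2$ and $\nrm{\Delta\phi}_{L^2}^2$ survives, plus an $O(L_xL_y)$ additive deficit. This is precisely the continuous mirror of Lem.~\ref{H2-stable}, whose discrete proof rests on the analogous discrete summation-by-parts identity.
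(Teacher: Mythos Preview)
The paper does not give an explicit proof of this theorem; it is stated as the evident continuous analogue of the preceding discrete results (the pseudo-energy dissipation \eqref{pde-psuedo-energy-decrease}, a continuous version of Lem.~\ref{H2-stable}, and the 2D Sobolev embeddings). Your three-step plan is exactly this route and is the intended one.

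There is, however, a small gap in your Step~2. After you absorb the negative quadratic via the pointwise bound $\tfrac14\phi^4+(\tfrac{\alpha}{2}-1)\phi^2\ge -\text{const}$, the surviving coercive piece is only $\tfrac14\nrm{\Delta\phi}_{L^2}^2$; there is no $\nrm{\phi}_{L^2}^2$ left, so the stated conclusion $E(\phi)\ge C\bigl(\nrm{\phi}_{L^2}^2+\nrm{\Delta\phi}_{L^2}^2\bigr)-C'L_xL_y$ does not follow from what you wrote. The clean repair, which also produces the precise additive constant $\tfrac{L_xL_y}{4}$ appearing in the statement, is to use the pointwise inequality $\tfrac14\phi^4\ge\tfrac12\phi^2-\tfrac14$ first. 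Integrating gives $\tfrac14\nrm{\phi}_{L^4}^4\ge\tfrac12\nrm{\phi}_{L^2}^2-\tfrac{L_xL_y}{4}$, and hence
\[
E(\phi)\ \ge\ \frac{1+\alpha}{2}\,\nrm{\phi}_{L^2}^2-\nrm{\nabla\phi}_{L^2}^2+\frac12\,\nrm{\Delta\phi}_{L^2}^2-\frac{L_xL_y}{4}.
\]
Now apply your interpolation $\nrm{\nabla\phi}_{L^2}^2\le \epsilon\,\nrm{\Delta\phi}_{L^2}^2+\tfrac{1}{4\epsilon}\,\nrm{\phi}_{L^2}^2$, but with $\epsilon$ chosen in the nonempty interval $\bigl(\tfrac{1}{2(1+\alpha)},\tfrac12\bigr)$ (your fixed choice $\epsilon=\tfrac14$ would require $\alpha>1$, which is not assumed). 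This yields $E(\phi)\ge C_5\bigl(\nrm{\phi}_{L^2}^2+\nrm{\Delta\phi}_{L^2}^2\bigr)-\tfrac{L_xL_y}{4}$ with $C_5>0$ depending only on $\alpha$, exactly the continuous counterpart of \eqref{H2-energy-estimate}. Steps~1 and~3 then finish the argument as you wrote them.
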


	\section{Error Estimate for the Second Order Scheme}
	\label{sec-main-results}

We now prove an error estimate for the second order scheme  (\ref{full-disc-2nd-a-1}) -- (\ref{full-disc-2nd-a-2}) for the MPFC equation. The following estimate, proved in~\cite{wise09}, shows control of the backward diffusion term.
	
	\begin{lemma}\label{special-lemma}
Suppose that $\phi\in {\mathcal C}_{\overline{m}\times\overline{n}}$ is periodic and that $\Dh\phi\in {\mathcal C}_{\overline{m}\times\overline{n}}$ is also periodic. Then 
	\begin{equation}
\nrm{\Dh \phi}_2^2 \le \frac{1}{3\epsilon^2}\nrm{\phi}_2^2 +\frac{2\epsilon}{3} \nrm{\nabh\left(\Dh \phi \right)}_2^2,
	\end{equation}
valid for arbitrary $\epsilon>0$.
	\end{lemma}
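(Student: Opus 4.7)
The plan is to establish this as a discrete Gagliardo--Nirenberg interpolation estimate of the form $\nrm{\Dh\phi}_2 \lesssim \nrm{\phi}_2^{1/3}\nrm{\nabh(\Dh\phi)}_2^{2/3}$, after which a weighted Young's inequality produces the desired weighted sum of squared norms.

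First I would apply the discrete Green's identity (summation by parts from App.~\ref{app-tools}) twice. Periodicity of $\phi$ kills the boundary contributions in the standard identity $\nrm{\nabh\phi}_2^2 = -(\phi,\Dh\phi)$, while periodicity of $\Dh\phi$ (the second hypothesis in the statement, which is precisely what is needed so that the boundary contributions vanish in the second use) gives $\nrm{\Dh\phi}_2^2 = -(\nabh\phi,\nabh(\Dh\phi))$. Applying Cauchy--Schwarz to each identity and chaining them together yields
\begin{equation*}
\nrm{\Dh\phi}_2^4 \le \nrm{\nabh\phi}_2^2 \, \nrm{\nabh(\Dh\phi)}_2^2 \le \nrm{\phi}_2\,\nrm{\Dh\phi}_2\,\nrm{\nabh(\Dh\phi)}_2^2 .
\end{equation*}
Dividing by $\nrm{\Dh\phi}_2$ (the estimate is trivial if $\Dh\phi\equiv 0$) and then raising to the power $2/3$ produces the interpolation bound $\nrm{\Dh\phi}_2^2 \le \nrm{\phi}_2^{2/3}\nrm{\nabh(\Dh\phi)}_2^{4/3}$.

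Finally, I would invoke the weighted Young's inequality $a^{1/3}b^{2/3} \le \tfrac{1}{3}a + \tfrac{2}{3}b$ in the rescaled form
\begin{equation*}
\bigl(\epsilon^{-2}X\bigr)^{1/3}\bigl(\epsilon Y\bigr)^{2/3} \le \frac{1}{3\epsilon^2}X + \frac{2\epsilon}{3}Y ,
\end{equation*}
with $X = \nrm{\phi}_2^2$ and $Y = \nrm{\nabh(\Dh\phi)}_2^2$; this gives exactly the stated inequality with the asserted constants.

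I do not anticipate any significant obstacle. The argument is essentially the discrete mirror of the continuous Fourier-side proof, where the pointwise bound $|\xi|^4 \le \tfrac{1}{3\epsilon^2} + \tfrac{2\epsilon}{3}|\xi|^6$ follows from the same Young's inequality; the only point to flag carefully is that the second summation by parts genuinely requires $\Dh\phi$ to be periodic, which is why this appears as an independent hypothesis rather than being subsumed into the periodicity of $\phi$.
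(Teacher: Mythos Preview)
Your argument is correct. The two summation-by-parts identities you use,
\[
\nrm{\nabh\phi}_2^2 = -h^2\ciptwo{\phi}{\Dh\phi}, \qquad \nrm{\Dh\phi}_2^2 = -\left(h^2\eipew{D_x\phi}{D_x(\Dh\phi)}+h^2\eipns{D_y\phi}{D_y(\Dh\phi)}\right),
\]
both follow from (\ref{green1stthm-2d}) and (\ref{green2ndthm-2d}) exactly as you say, with the second hypothesis (periodicity of $\Dh\phi$) eliminating the boundary terms in the second application. Chaining the two Cauchy--Schwarz bounds gives $\nrm{\Dh\phi}_2^3 \le \nrm{\phi}_2\,\nrm{\nabh(\Dh\phi)}_2^2$, and your weighted Young inequality with exponents $3$ and $3/2$ then reproduces the constants $\tfrac{1}{3\epsilon^2}$ and $\tfrac{2\epsilon}{3}$ on the nose.

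As for comparison: the paper does not actually supply a proof of this lemma here; it simply cites \cite{wise09}. The argument given there is essentially the one you have written (integration by parts twice, Cauchy--Schwarz, then Young), so your approach matches the intended one.
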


In addition, a control of the error related to the nonlinear term in the second order scheme is needed. 
	\begin{lemma}
	\label{control-nonlinear}
Suppose $\Phi^k,\, \Phi^{k+1} , \, \phi^k , \, \phi^{k+1} \in {\mathcal C}_{\overline{m}\times\overline{n}}$ are periodic and denote their differences by $\tilde{\phi}^k :=\Phi^k-\phi^k$ and $\tilde{\phi}^{k+1} :=\Phi^{k+1}-\phi^{k+1}$.   Then we have 
	\begin{eqnarray} 
&& \hspace{-0.2in} \nrm{ \Dh \left( \chi\left(\Phi^{k+1},\Phi^k\right) - \chi\left(\phi^{k+1},\phi^k\right) \right) }_2  \nonumber 
\\
  &\le& C_{18}  \biggl\{ K_1^2  \cdot  \left( \nrm{\Dh \tilde{\phi}^{k+1}  }_2 + \nrm{\Dh \tilde{\phi}^k }_2 \right)
  +  K_1 K_4 \left( \nrm{ \nabla_h \tilde{\phi}^{k+1} }_4 + \nrm{ \nabla_h \tilde{\phi}^k }_4  \right)  
	\nonumber
	\\
&&+ \left( K_1 K_3 + K_4^2 \right) \cdot \left( \nrm{  \tilde{\phi}^{k+1} }_\infty  +  \nrm{  \tilde{\phi}^k }_\infty \right) 
   + \left(  K_5^2  + K_1 K_2  \right)  \cdot \left( \nrm{  \tilde{\phi}^{k+1} }_2  +  \nrm{  \tilde{\phi}^k }_2 \right)   \biggr\} \ , 
	\nonumber
	\\
&& 
	\label{(6.52)}
	\end{eqnarray}
with 
	\begin{eqnarray} 
K_1 &=& \nrm{ \Phi^{k+1}}_\infty  + \nrm{ \Phi^k }_\infty  + \nrm{ \phi^{k+1} }_\infty  + \nrm{ \phi^k }_\infty  ,
	\nonumber 
	\\
K_2 &=&  \nrm{ \Dh^x \Phi^{k+1} }_\infty + \nrm{ \Dh^x \Phi^k }_\infty + \nrm{ \Dh^y \Phi^{k+1} }_\infty + \nrm{ \Dh^y \Phi^k }_\infty  ,  
	\nonumber 
	\\
K_3 &=&  \nrm{ \Dh^x \phi^{k+1} }_2 + \nrm{ \Dh^x \phi^k }_2 + \nrm{ \Dh^y \phi^{k+1} }_2 + \nrm{ \Dh^y \phi^k }_2  ,
	\nonumber    
	\\
K_4 &=& \nrm{ \nabla_h \Phi^{k+1} }_4  + \nrm{ \nabla_h \Phi^k }_4  + \nrm{ \nabla_h \phi^{k+1} }_4  + \nrm{ \nabla_h \phi^k }_4  , 
	\nonumber
	\\
K_5 &=& \nrm{ \nabla_h \Phi^{k+1} }_\infty  + \nrm{ \nabla_h \Phi^k }_\infty  , 
	\end{eqnarray}
and $C_{18}$ is a positive constant that is independent of $h$.
	\end{lemma}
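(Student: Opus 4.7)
Set $A = \Phi^{k+1}$, $B = \Phi^k$, $a = \phi^{k+1}$, $b = \phi^k$, $\tilde{A} = \tilde{\phi}^{k+1}$, $\tilde{B} = \tilde{\phi}^k$. My first step is purely algebraic. Since $4\chi(u,v) = u^3 + u^2 v + u v^2 + v^3$ is cubic, telescoping each of the four cubic differences using identities such as $X^3 - Y^3 = (X^2+XY+Y^2)(X-Y)$ and $X^2 Y - Z^2 W = (X+Z)Y(X-Z) + Z^2(Y-W)$ yields the exact identity
\[
\chi(\Phi^{k+1},\Phi^k) - \chi(\phi^{k+1},\phi^k) = P_A\, \tilde{A} + P_B\, \tilde{B},
\]
in which $P_A$ and $P_B$ are quadratic polynomials in $\{A,B,a,b\}$, each a sum of monomials $XY$ with $X,Y \in \{A,B,a,b\}$ and each of $L^\infty$-norm $\le CK_1^2$. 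I then sort the monomials into ``exact'' ones (both factors in $\{A,B\}$) and ``mixed'' ones (at least one factor in $\{a,b\}$), writing $P_\star = P_\star^{\mathrm{exact}} + P_\star^{\mathrm{mixed}}$.

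Next, I would apply $\Dh$ to each product $P_\star \tilde{Q}$ by the discrete Leibniz rule $\Dh(PQ) = (\Dh P)Q + P(\Dh Q) + 2\,\nabh P \cdot \nabh Q$ (modulo averaging for cell-centered differences, handled as in App.~\ref{app-tools}), and bound each resulting term in $L^2$ by a discrete H\"older inequality. First, $\nrm{P_\star \Dh\tilde{\phi}}_2 \le \nrm{P_\star}_\infty \nrm{\Dh\tilde{\phi}}_2 \le CK_1^2 \nrm{\Dh\tilde{\phi}}_2$ produces the first term of (\ref{(6.52)}). Second, $\nrm{\nabh P_\star \cdot \nabh\tilde{\phi}}_2 \le \nrm{\nabh P_\star}_4 \nrm{\nabh\tilde{\phi}}_4 \le CK_1K_4 \nrm{\nabh\tilde{\phi}}_4$, via the first-order Leibniz bound $\nrm{\nabh(XY)}_4 \le \nrm{X}_\infty \nrm{\nabh Y}_4 + \nrm{\nabh X}_4 \nrm{Y}_\infty$ combined with $\nrm{\nabh W}_4 \le K_4$ for any $W\in\{A,B,a,b\}$, produces the second term. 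Third, the cross term $(\Dh P_\star)\tilde{\phi}$ is handled separately on each bin: for a mixed monomial $P = XY$, at least one factor is numerical, so only $L^2$/$L^4$ bounds on derivatives are available (through $K_3$ and $K_4$), yielding $\nrm{\Dh P_\star^{\mathrm{mixed}}}_2 \le C(K_1K_3 + K_4^2)$, which is paired with $\nrm{\tilde{\phi}}_\infty$; for an exact monomial $P = XY$ with $X,Y\in\{A,B\}$, the exact solution supplies $L^\infty$ control on up to two derivatives, giving $\nrm{\Dh P_\star^{\mathrm{exact}}}_\infty \le C(K_1 K_2 + K_5^2)$, which is paired with $\nrm{\tilde{\phi}}_2$. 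Summing over $\tilde{Q}\in\{\tilde{A},\tilde{B}\}$ and absorbing universal constants into a single $C_{18}$ yields (\ref{(6.52)}).

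The principal obstacle is organizational rather than analytical: one must ensure that every monomial of $P_A$ and $P_B$ is assigned to the correct bin so that the asymmetric $L^2$-$L^\infty$ versus $L^\infty$-$L^2$ H\"older pairings in the third step are applied under the proper norm inventory. The asymmetry is essential, because the numerical solution enjoys only the finite-energy ($H^2$ and $W^{1,4}$) regularity supplied by the energy estimates of the previous section, whereas the exact solution carries $L^\infty$ control on up to two derivatives. A secondary technicality is that the cell-centered discrete Leibniz rule produces averaged rather than exact products, but these averages only contribute harmless universal constants, which are absorbed into $C_{18}$.
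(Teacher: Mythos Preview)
Your approach is essentially the paper's: decompose the cubic difference algebraically into triple products of the form $(\text{quadratic in }\Phi,\phi)\cdot\tilde\phi$, apply a discrete Leibniz expansion to $\Dh$, and bound each resulting term by a suitable H\"older pairing. The only cosmetic difference is that the paper writes down an explicit triple-product identity for $\Dh^x(fgh)$ and handles each of its six terms directly, whereas you iterate a two-factor Leibniz rule, first on $P\tilde\phi$ and then on $P$.

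There is, however, one organizational slip in your binning. You sort the quadratic coefficients $P=XY$ into ``exact'' and ``mixed'' and then estimate $\nrm{\Dh P^{\mathrm{mixed}}}_2$ \emph{as a whole}, pairing it with $\nrm{\tilde\phi}_\infty$. But for a genuinely mixed monomial such as $P=\Phi^{k+1}\phi^{k+1}$, the Leibniz expansion of $\Dh P$ contains the summand $\phi^{k+1}\,\Dh\Phi^{k+1}$, in which the Laplacian falls on the \emph{exact} factor. That piece contributes $K_1K_2$ (not $K_1K_3$) to $\nrm{\Dh P}_2$, and pairing it with $\nrm{\tilde\phi}_\infty$ would produce $K_1K_2\nrm{\tilde\phi}_\infty$, which is not dominated by the right-hand side of \eqref{(6.52)}. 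The fix is the one you nearly state yourself: bin at the level of individual Leibniz-expansion terms rather than at the monomial level. Whenever the second derivative lands on an exact factor $\Phi$, pair with $\nrm{\tilde\phi}_2$ (yielding the $K_1K_2$ contribution); whenever it lands on a numerical factor $\phi$, pair with $\nrm{\tilde\phi}_\infty$ (yielding $K_1K_3$); and analogously $K_5^2$ versus $K_4^2$ for the gradient-gradient cross terms. The paper's triple-product formula achieves this automatically because each term is already isolated before any norm is taken.
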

	
	\begin{proof} 
First, careful expansions yeild the following nonlinear error decompositions: 
	\begin{eqnarray}  
\left(\Phi^{k+1}\right)^3 - \left(\phi^{k+1}\right)^3 &=&  \left(  \left(\Phi^{k+1}\right)^2 + \Phi^{k+1} \phi^{k+1} + \left(\phi^{k+1}\right)^2 \right) \tilde{\phi}^{k+1} \ ,
	\label{nonlinear-1-1}
	\\
\left(\Phi^{k+1}\right)^2 \Phi^k - \left(\phi^{k+1}\right)^2 \phi^k &=&  \left(  \Phi^{k+1} + \phi^{k+1} \right) \Phi^k \tilde{\phi}^{k+1} + \left(\phi^{k+1}\right)^2 \tilde{\phi}^k \ ,
	\label{nonlinear-1-2}  
	\\
\Phi^{k+1} \left(\Phi^k\right)^2 - \phi^{k+1} \left(\phi^k\right)^2 &=&  \left(  \Phi^k + \phi^k \right) \Phi^{k+1} \tilde{\phi}^k  + \left(\phi^k\right)^2 \tilde{\phi}^{k+1} \ ,
	\label{nonlinear-1-3}    
	\\
\left(\Phi^k\right)^3 - \left(\phi^k\right)^3 &=&  \left(  \left(\Phi^k\right)^2 + \Phi^k \phi^k + \left(\phi^k\right)^2 \right) \tilde{\phi}^k \ .
	\label{nonlinear-1-4}
	\end{eqnarray}
Meanwhile, a detailed calculation yields the following finite difference expansion:
	\begin{eqnarray}
\Dh^x \left( f g h \right)_{i,j} &=&  f_{i,j} g_{i,j} ( \Delta_h^x) h_{i,j} + f_{i,j} h_{i,j} ( \Delta_h^x) g_{i,j}  +  g_{i,j} h_{i,j} ( \Delta_h^x) f_{i,j}
	\nonumber 
	\\
&&  + f_{i,j}  \left(  D_x g_{i+\hf,j} D_x h_{i+\hf,j}  + D_x g_{i-\hf,j} D_x h_{i-\hf,j}   \right)
	\nonumber 
	\\
&&  + g_{i,j}  \left(  D_x f_{i+\hf,j} D_x h_{i+\hf,j}  + D_x f_{i-\hf,j} D_x h_{i-\hf,j}   \right)
	\nonumber   
	\\
&& + h_{i+1,j}  D_x f_{i+\hf,j} D_x g_{i+\hf,j}  + h_{i-1,j} D_x f_{i-\hf,j} D_x g_{i-\hf,j} \  . 
	\label{(6.50)}
	\end{eqnarray}
An analogous formula for $\Dh^y\left( f g h \right)_{i,j}$ holds by symmetry.   First, we bound all of the terms in the expansion of $\Dh^x \left( (\Phi)^3 - (\phi)^3 \right)$.  For brevity, we only show how this is done for one term, namely, $\Delta_h^x\left(\left(\phi^k\right)^2  \tilde{\phi}^k\right)$.  The expansion is given by 
	\begin{equation} 
\Delta_h^x  \left( \left(\phi^k\right)^2  \tilde{\phi}^k \right)_{i,j}  = N^{(1)}_{i,j} + 2 N^{(2)}_{i,j} + 2 N^{(3)}_{i,j} +  N^{(4)}_{i,j}  ,   
	\end{equation}
with
	\begin{eqnarray}
N^{(1)}_{i,j} &=& \left( \phi^k_{i,j} \right)^2  \Delta_h^x \tilde{\phi}^k_{i,j} ,  \quad 
N^{(2)}_{i,j}  =  \phi^k_{i,j} \tilde{\phi}^k_{i,j}  \Delta_h^x \phi^k_{i,j} \ ,
	\\
N^{(3)}_{i,j} &=&  \phi^k_{i,j}  \left(  D_x \phi^k_{i+1/2,j} D_x \tilde{\phi}^k_{i+1/2,j} + D_x \phi^k_{i-1/2,j} D_x \tilde{\phi}^k_{i-1/2,j}   \right) \ ,
	\\
N^{(4)}_{i,j} &=&  \tilde{\phi}^k_{i+1,j}  \left( D_x \phi^k_{i+1/2,j} \right)^2 + \tilde{\phi}^k_{i-1,j} \left( D_x \phi^k_{i-1/2,j} \right)^2 \ .
	\label{nonlinear-2}
	\end{eqnarray}
Discrete H\"older's inequalities can be applied to bound all of the above terms as follows:
	\begin{eqnarray}
  && 
  \left\| N^{(1)} \right\|_2 
  \le  \left\| \phi^k  \right\|_{\infty}^2 \cdot \left\|   \Delta_h^x \tilde{\phi}^k  \right\|_2 
  \le  \left\| \phi^k  \right\|_{\infty}^2 \cdot \left\|   \Delta_h \tilde{\phi}^k  \right\|_2 ,  
  \label{nonlinear-3-1}
\\
  && 
  \left\| N^{(2)} \right\|_2   
  \le  \left\| \phi^k  \right\|_{\infty} \cdot \left\|   \Delta_h \phi^k  \right\|_2 
  \cdot \left\|   \tilde{\phi}^k  \right\|_\infty  ,   \label{nonlinear-3-2}  
\\
  && 
  \left\| N^{(3)} \right\|_2   
  \le  2 \left\| \phi^k  \right\|_{\infty} \cdot \left\|   \nabla_h \phi^k  \right\|_4 
  \cdot \left\|   \nabla_h \tilde{\phi}^k  \right\|_4  ,   \label{nonlinear-3-3}   
\\
  && 
  \left\| N^{(4)} \right\|_2   
  \le  2 \left\| \nabla_h \phi^k  \right\|_4^2 
  \cdot \left\|   \tilde{\phi}^k  \right\|_\infty  ,   \label{nonlinear-3-4}   
\end{eqnarray} 
with repeated application of Lem.~\ref{laplacian-estimate}.  The nonlinear error term $\Dh^y \left( (\Phi)^3 - (\phi)^3 \right)$ can be analyzed in exactly the same way. Combining the estimates using the triangle inequality gives the result (\ref{(6.52)}) and  the lemma is proven.
	\end{proof} 

We now establish an error estimate for the fully discrete second order convex splitting scheme for the MPFC equation.  We do this in three steps.  First, we derive a local truncation error for a finite Fourier projection of the exact solution to the MPFC equation (\ref{MPFC-eq}).  Second, we derive an estimate of the difference between our numerical solution to the scheme (\ref{s-2nd-1}) -- (\ref{s-2nd-3}) and this finite Fourier projection.  Third, we use the triangle inequality to derive our global error estimate.

In the rest of the paper, for notational simplicity only, we will assume $L_x=L_y=L$, and hence $\Omega = (0,L)^2$.  As a consequence we have $m=n=N$, where we may assume $N$ is even.  The more general rectangular case can be handled straightforwardly.  Now, suppose that $\Phi$ has the following Fourier series representation on $\Omega$:
	\begin{equation} 
\Phi (x,y,t) =   \sum_{k,l=-\infty}^{\infty}  \widehat{\Phi}_{k,l} (t) {\rm e}^ {\frac{2 \pi {\rm i}}{L} \left(k\, x +l\, y\right)} \ ,
	\end{equation} 
with
	\begin{equation} 
\widehat{\Phi}_{k,l} (t) =  \frac{1}{| \Omega |} \int_{\Omega}  \Phi (x,y,t) {\rm e}^{-\frac{2 \pi {\rm i}}{L} \left(k\, x +l\, y\right)} \,  dx \, dy \ .   
	\label{consistency-Phi-1}
	\end{equation}
The (finite Fourier) projection of $\Phi$ onto the space ${\cal B}^{N/2}$, consisting of all trigonometric polynomials in $x$ and $y$ of degree up to $N/2$, is defined as 
	\begin{equation} 
\Phi_N (x,y,t) := {\cal P}_N \Phi(x,y,t)  := \sum_{k,l=-N/2+1}^{N/2} \widehat{\Phi}_{k,l} (t) {\rm e}^{\frac{2 \pi {\rm i}}{L} \left(k\, x +l\, y\right)} \  .  
	\label{consistency-Phi-2}
	\end{equation}  
Define
	\begin{equation} 
\Psi_N (x,y,t) := \partial_t \Phi_N (x,y,t) - \frac{s^2}{12} \partial_t^3 \Phi_N (x,y,t) \ .  
	\label{consistency-Phi-3}
	\end{equation}
	
For any function $G=G(x,y,t)$, given $s>0$ and $k>0$, we define $G^k(x,y) := G(x,y,s\cdot k)$.

	\begin{theorem}
	\label{thm-local-truncation-error}
Suppose the unique periodic solution for the MPFC equation (\ref{dyn-conserve}) is given by 
	\begin{eqnarray} 
\Phi \in H^4 \left(0,T; L^2 \left( \Omega \right)\right)  &\cap& L^\infty \left(0,T; H^{8} \left(\Omega\right)\right)
	\nonumber
	\\
&\cap& W^{2,\infty} \left(0,T; H^{2} \left(\Omega\right)\right) \cap H^{2} \left(0,T; H^{6} \left(\Omega\right)\right) \ ,
	\label{MPFC-regularity}
	\end{eqnarray} 
for $T< \infty$. Set $\Psi := \partial_t\Phi$.  Then 
	\begin{eqnarray}
\beta \, \frac{\Psi_N^{k+1} - \Psi_N^k}{s} &=& \Dh \left( \chi\left( \Phi_N^{k+1},\Phi_N^k \right) + \frac{\alpha}{2}\left(\Phi_N^{k+1} + \Phi_N^k\right) + \Delta_h \left( 3 \Phi_N^k -  \Phi_N^{k-1} \right)  \right)  
	\nonumber 
	\\
&& + \frac{1}{2}\Delta_h^3 \left(\Phi_N^{k+1} + \Phi_N^k\right)  -  \frac{\Phi_N^{k+1} - \Phi_N^k}{s}  +  \tau_1^k \ ,
	\label{truncation-equation-1}
	\\
\frac{\Phi_N^{k+1} - \Phi_N^k}{s} &=&  \frac{1}{2}\left(\Psi_N^{k+1} + \Psi_N^k\right) + s \tau_2^k \ ,
	\label{truncation-equation-2}
	\end{eqnarray}
where $\tau_1^k$ and $\tau_2^k$ satisfy
	\begin{equation}
\nrm{\tau_i}_{L_s^2\left(0,T; L_h^2(\Omega)\right)} := \sqrt{s \sum_{k=0}^{T/s} \nrm{\tau_i^{k+1}}^2_2\ } \le M \left(s^2 + h^2 \right) \ ,
	\label{discrete-norm-truncation}  
	\end{equation} 
for $i=1,2$, with
	\begin{eqnarray}  
M  &\le& C \Bigl(\nrm{\Phi}_{H^4 \left(0,T; L^2 \left( \Omega \right)\right)} 
+ \nrm{\Phi}_{W^{2,\infty} \left(0,T; H^2 \left( \Omega \right)\right)} 
+ \nrm{\Phi}_{H^{2} \left(0,T; H^{6}\left(\Omega\right)\right)}	
	\nonumber 
\\
  && 
    + \nrm{\Phi}_{L^\infty \left(0,T; H^{4}\left(\Omega\right)\right)}^{3}   \cdot 
  \left( 1 + \nrm{\Phi}_{H^2 \left(0,T; H^{2}\left(\Omega\right)\right)}^{2}  \right) 	
+ \nrm{\Phi}_{L^\infty \left(0,T; H^{8} \left(\Omega\right)\right)} \Bigr) \ .
	\label{truncation-error} 
	\end{eqnarray}
	\end{theorem}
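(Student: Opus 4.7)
The plan is to substitute $\Phi_N^k := \Phi_N(\cdot,ks)$ and the modified velocity $\Psi_N^k := \partial_t \Phi_N^k - \tfrac{s^2}{12}\,\partial_t^3 \Phi_N^k$ into the scheme rewritten as (\ref{truncation-equation-1})--(\ref{truncation-equation-2}), Taylor-expand every discrete difference and midpoint average about $t^{k+\hf}$, and match the leading-order terms against the MPFC equation evaluated on $\Phi_N$. Spatial truncation enters only through second-order consistency estimates such as $\nrm{(\Delta_h - \Delta)F}_2 \le Ch^2\,\nrm{F}_{H^4}$ and analogues for $\Delta_h^3$ applied to the smooth, band-limited projection $\Phi_N$. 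The projection error $\Phi - \Phi_N$ in any Sobolev norm is spectrally small under the regularity (\ref{MPFC-regularity}) and is therefore absorbed into the $O(h^2)$ contribution.

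For the scalar relation (\ref{truncation-equation-2}), Taylor expansion about $t^{k+\hf}$ gives
\begin{align*}
\frac{\Phi_N^{k+1}-\Phi_N^k}{s} &= \partial_t\Phi_N^{k+\hf} + \tfrac{s^2}{24}\partial_t^3\Phi_N^{k+\hf} + R_1, \\
\tfrac{1}{2}\bigl(\Psi_N^{k+1}+\Psi_N^k\bigr) &= \partial_t\Phi_N^{k+\hf} + \bigl(\tfrac{s^2}{8}-\tfrac{s^2}{12}\bigr)\partial_t^3\Phi_N^{k+\hf} + R_2,
\end{align*}
with $R_1,R_2$ of order $s^3$ controlled by $\partial_t^4\Phi_N$. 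The $\partial_t^3$ coefficients agree (which is the precise reason the $-\tfrac{s^2}{12}\partial_t^3\Phi_N$ correction was placed in $\Psi_N$), so $s\tau_2^k$ is pointwise of order $s^4$; squaring and summing in $k$ yields the desired $L_s^2(0,T;L_h^2)$ bound in terms of $\nrm{\Phi}_{H^4(0,T;L^2)}$, which appears in $M$.

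For (\ref{truncation-equation-1}), I would expand each discrete object about $t^{k+\hf}$: the forward difference of $\Psi_N$ gives $\beta\partial_t^2\Phi_N^{k+\hf} - \tfrac{\beta s^2}{12}\partial_t^4\Phi_N^{k+\hf} + O(s^2)$; the extrapolation $\hat{\Phi}_N^{k+\hf}$ and the midpoint averages equal $\Phi_N^{k+\hf}$ up to $O(s^2)$; and the cubic term satisfies the algebraic identity
\[ \chi\bigl(\Phi_N^{k+1},\Phi_N^k\bigr) = \bigl(\Phi_N^{k+\hf}\bigr)^3 + \tfrac{1}{4}\bigl(\Phi_N^{k+1}-\Phi_N^k\bigr)^2 \Phi_N^{k+\hf}, \]
whose last piece is $O(s^2)$ in $L_h^2$ after writing $\Phi_N^{k+1}-\Phi_N^k = s\int_0^1 \partial_t\Phi_N(\cdot,t^k+\sigma s)\,d\sigma$. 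Matching against the PDE $\beta\partial_{tt}\Phi_N + \partial_t\Phi_N = \Delta\mu_N$ removes the leading-order terms; what remains are temporal remainders of order $s^2$ (controlled by $\partial_t^2$ and $\partial_t^4$ derivatives of $\Phi_N$) and spatial remainders of order $h^2$ produced by $(\Delta_h-\Delta)$ acting on $\Phi_N$, $\Delta\Phi_N$, and $\Delta^3\Phi_N$. Summation in $k$ then yields the $L_s^2(0,T;L_h^2)$ bound of $\tau_1$ with precisely the norms displayed in (\ref{truncation-error}).

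The main obstacle will be the nonlinear truncation: its remainder must be pushed through the discrete Laplacian $\Delta_h$ and then bounded in the $L_s^2(0,T;L_h^2)$ norm so as to reproduce \emph{exactly} the trilinear $\nrm{\Phi}_{L^\infty(0,T;H^4)}^3$ dependence coupled with the $\nrm{\Phi}_{H^2(0,T;H^2)}^2$ factor in (\ref{truncation-error}). This calls for the finite-difference product expansion of a cubic in the spirit of (\ref{nonlinear-1-1})--(\ref{nonlinear-1-4}) together with (\ref{(6.50)}), combined with the fundamental theorem of calculus in $t$ to convert pointwise $s^2$ factors into square-integrated ones.
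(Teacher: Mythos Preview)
Your proposal is correct and follows essentially the same strategy as the paper: Taylor-expand each discrete object about $t^{k+\hf}$, verify that the $-\tfrac{s^2}{12}\partial_t^3\Phi_N$ correction in $\Psi_N$ cancels the $O(s^2)$ discrepancy in (\ref{truncation-equation-2}), and then match all leading-order terms against the MPFC equation for $\Phi_N$, with spatial $(\Delta_h-\Delta)$ errors controlled by $O(h^2)$ estimates and the projection error $\Phi-\Phi_N$ absorbed by spectral approximation.

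One cosmetic difference is worth noting for the nonlinear term. You propose to write $\chi(\Phi_N^{k+1},\Phi_N^k)=(\Phi_N^{k+\hf})^3+\tfrac14(\Phi_N^{k+1}-\Phi_N^k)^2\Phi_N^{k+\hf}$ and then push the remainder through $\Delta_h$ via the discrete product expansion (\ref{(6.50)}). The paper instead first applies the spatial consistency estimate (Prop.~\ref{Prop-A.2}) to replace $\Delta_h\chi(\Phi_N^{k+1},\Phi_N^k)$ by $\Delta\chi(\Phi_N^{k+1},\Phi_N^k)$ at the cost of $Ch^2\nrm{\Phi_N}_{H^4}^3$, and then uses the \emph{continuous} product rule for $\Delta(fgh)$ together with the identity (\ref{truncation-est-2nd-3-2}) to bound the temporal remainder in $H^2$. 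Both routes lead to the same constants in (\ref{truncation-error}); the paper's route separates the spatial and temporal contributions more cleanly and avoids the somewhat heavier discrete-product bookkeeping you anticipate. A minor slip: with only $H^4(0,T)$ regularity your remainders $R_1,R_2$ are $O(s^3)$ (as you say), so $s\tau_2^k=O(s^3)$, not $O(s^4)$; this still gives $\tau_2^k=O(s^2)$, which is all that is claimed.
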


The details of the proof are technical and are contained in the appendix.

\begin{rem} 
The constructed solution (\ref{consistency-Phi-2}) comes from the Fourier projection 
of the exact solution $\Phi$. 
The reason for the choice of $\Phi_N$ instead of $\Phi$ is the fact that 
$\Phi_N \in {\cal B}^{N/2}$, which in turn gives a local truncation error estimate 
without involving with an aliasing error, as can be seen in the appendix. 
Meanwhile, an $O (s^2)$ correction term is added in the construction 
(\ref{consistency-Phi-3}) for $\Psi_N$ so that a higher order consistency is 
obtained in (\ref{truncation-equation-2}). Such a correction term is based on an 
asymptotic expansion of the numerical scheme and the resulting higher order 
consistency is crucial in the stability and convergence analysis. 
Finally, a numerical convergence of the numerical solution to $(\Phi_N,\Psi_N)$ 
is equivalent to its convergence to the exact solution $(\Phi, \Psi)$, since 
$\Phi$ is a spectrally accurate approximation, and $\Psi_N$ is an $O (s^2)$ 
approximation to $\Psi$.  
\end{rem}

	\begin{theorem}
	\label{thm-projection-error-estimate}
Suppose $\Phi$, $\Phi_N$, $\Psi$ and $\Psi_N$ are as in the last theorem.  Define $\tilde{\phi}_{i,j}^k := \Phi_N^k\left(h\cdot i,h\cdot j\right)-\phi_{i,j}^k$ and $\tilde{\psi}_{i,j}^k := \Psi_N^k\left(h\cdot i,h\cdot j\right)-\psi_{i,j}^k$, where $\phi^k_{i,j},\, \psi^k_{i,j} \in {\mathcal C}_{\overline{m}\times\overline{n}}$ are the $k^{\rm th}$ periodic solutions of (\ref{s-2nd-1}) -- (\ref{s-2nd-3}),  or equivalently,  (\ref{full-disc-2nd-a-1}) -- (\ref{full-disc-2nd-a-2}), with $\phi^0_{i,j} := \Phi^0_{i,j}$,  $\phi^{-1}_{i,j}=\phi^0_{i,j}$ and $\psi^0_{i,j}=0$.  Then
	\begin{equation}\label{(6.39)}
\nrm{\tilde{\phi}^k}_2 +  \nrm{\nabh \left(\Dh \tilde{\phi}^k\right)}_2  
\le C\left( s^2 + h^2 \right) \ ,
	\end{equation} 
provided $s$ is sufficiently small, for some $C>0$ that is independent of $h$ and $s$.
	\end{theorem}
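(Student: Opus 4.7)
The plan is to subtract the numerical scheme from the consistency identities to obtain error equations, run a discrete energy estimate at the $H^3$ level, bound the cubic nonlinearity using Lem.~\ref{control-nonlinear}, and close with a discrete Gronwall argument under a standard bootstrap. Starting from (\ref{s-2nd-1})--(\ref{s-2nd-3}) and (\ref{truncation-equation-1})--(\ref{truncation-equation-2}), and using the numerical identity $(\phi^{k+1}-\phi^k)/s = \psi^{k+1/2}$, the $\tilde{\psi}$-error equation takes the form
\begin{equation*}
\beta\,\frac{\tilde{\psi}^{k+1}-\tilde{\psi}^k}{s} + \frac{\tilde{\phi}^{k+1}-\tilde{\phi}^k}{s} = \Dh\mathcal{N}^{k+1/2} + \alpha\,\Dh\tilde{\phi}^{k+1/2} + \Dh^2\bigl(3\tilde{\phi}^k-\tilde{\phi}^{k-1}\bigr) + \Dh^3\tilde{\phi}^{k+1/2} + \tau_1^k,
\end{equation*}
with $\mathcal{N}^{k+1/2} := \chi(\Phi_N^{k+1},\Phi_N^k) - \chi(\phi^{k+1},\phi^k)$. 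The second equation $\tilde{\phi}^{k+1}-\tilde{\phi}^k = s\tilde{\psi}^{k+1/2} + s^2\tau_2^k$ couples $\tilde{\phi}$ and $\tilde{\psi}$, and the initial data give $\tilde{\phi}^0 = \tilde{\phi}^{-1} = 0$ and $\tilde{\psi}^0 = 0$.

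Next I take the discrete inner product of the $\tilde{\psi}$-error equation with $\tilde{\psi}^{k+1/2}$ and integrate by parts repeatedly. The $\beta$-derivative yields $\frac{\beta}{2s}\bigl(\|\tilde{\psi}^{k+1}\|_2^2 - \|\tilde{\psi}^k\|_2^2\bigr)$; the $\Dh^3$-term, after substituting $\nabh\Dh\tilde{\psi}^{k+1/2} = (\nabh\Dh\tilde{\phi}^{k+1}-\nabh\Dh\tilde{\phi}^k)/s - s\nabh\Dh\tau_2^k$, yields $\frac{1}{2s}\bigl(\|\nabh\Dh\tilde{\phi}^{k+1}\|_2^2 - \|\nabh\Dh\tilde{\phi}^k\|_2^2\bigr)$ plus a consistency remainder $s\langle\nabh\Dh\tilde{\phi}^{k+1/2},\nabh\Dh\tau_2^k\rangle$; the $\alpha\Dh$-term analogously produces $\frac{\alpha}{2s}\bigl(\|\nabh\tilde{\phi}^{k+1}\|_2^2-\|\nabh\tilde{\phi}^k\|_2^2\bigr)$ plus a lower-order remainder; and the $(\tilde{\phi}^{k+1}-\tilde{\phi}^k)/s$-piece contributes the dissipation $\|\tilde{\psi}^{k+1/2}\|_2^2$. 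The backward-diffusion term is handled analogously to Thm.~\ref{energy-stable}: after the decomposition $3\tilde{\phi}^k - \tilde{\phi}^{k-1} = 2\tilde{\phi}^{k+1/2} - (\tilde{\phi}^{k+1}-2\tilde{\phi}^k+\tilde{\phi}^{k-1})$, the leading part produces a $-\frac{1}{s}\Delta\|\Dh\tilde{\phi}\|_2^2$ contribution that is absorbed into a small fraction of the $\frac{1}{2s}\Delta\|\nabh\Dh\tilde{\phi}\|_2^2$ energy via Lem.~\ref{special-lemma} at the cost of a low-order $\|\tilde{\phi}\|_2^2$ penalty, while the second-difference remainder is accommodated by augmenting the error energy with an auxiliary piece $\|\nabh(\tilde{\phi}^k-\tilde{\phi}^{k-1})\|_2^2$ mirroring the $\tilde{\mathcal{F}}$ construction in Thm.~\ref{energy-stable}.

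For the nonlinear contribution I would bound $\langle\Dh\mathcal{N}^{k+1/2}, \tilde{\psi}^{k+1/2}\rangle$ by Cauchy--Schwarz together with Lem.~\ref{control-nonlinear}. The coefficients $K_1, K_2, K_4, K_5$ are uniformly bounded by combining the regularity (\ref{MPFC-regularity}) on $\Phi$ (and hence on $\Phi_N$) with the discrete energy-stability bounds (\ref{infty-uniform-bound})--(\ref{W14-uniform-bound}) on $\phi^k$, while $K_3 \le C_9$ follows from (\ref{h2-uniform-bound}). The error-norm factors $\|\tilde{\phi}^j\|_\infty$, $\|\nabh\tilde{\phi}^j\|_4$, $\|\Dh\tilde{\phi}^j\|_2$, $\|\tilde{\phi}^j\|_2$ appearing in (\ref{(6.52)}) are each controlled by $C\bigl(\|\tilde{\phi}^j\|_2 + \|\nabh\Dh\tilde{\phi}^j\|_2\bigr)$ through the two-dimensional discrete Sobolev embedding $H^3 \hookrightarrow L^\infty \cap W^{1,4}$ (Lems.~\ref{sobolev-2d-final}, \ref{embedding}) together with Lem.~\ref{special-lemma}. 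A $\delta$-weighted Young inequality then absorbs a small $\delta\|\tilde{\psi}^{k+1/2}\|_2^2$ into the dissipation, leaving a $Cs$-times-energy term. Assembling everything produces a three-step recursion $G^{k+1} - G^k \le Cs\bigl(G^{k+1} + G^k + G^{k-1}\bigr) + Cs\bigl(\|\tau_1^k\|_2^2 + \|\nabh\Dh\tau_2^k\|_2^2\bigr)$ with
\begin{equation*}
G^k := \beta\|\tilde{\psi}^k\|_2^2 + \|\nabh\Dh\tilde{\phi}^k\|_2^2 + \alpha\|\nabh\tilde{\phi}^k\|_2^2 + \|\tilde{\phi}^k\|_2^2 + \|\nabh(\tilde{\phi}^k-\tilde{\phi}^{k-1})\|_2^2,
\end{equation*}
and a discrete Gronwall argument, combined with vanishing initial data and the truncation bound (\ref{discrete-norm-truncation}) (strengthened to the $H^3$ seminorm by the same asymptotic expansion that produces (\ref{truncation-error})), delivers $G^k \le C(s^2+h^2)^2$, hence (\ref{(6.39)}).

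The main obstacle is closing the nonlinear estimate at the right order: Lem.~\ref{control-nonlinear} requires discrete $L^\infty$ and $W^{1,4}$ control on $\tilde{\phi}^j$ that is only available through the very $\|\nabh\Dh\tilde{\phi}^j\|_2$-quantity we are simultaneously estimating. I would resolve this by a standard a priori / bootstrap argument: assume inductively that $\|\tilde{\phi}^j\|_\infty + \|\nabh\tilde{\phi}^j\|_4 \le 1$ for all $j \le k$, perform the energy estimate to obtain $G^{k+1} \le C(s^2+h^2)^2$, then close the bootstrap by taking $s$ and $h$ small enough that the 2D Sobolev embedding yields $\|\tilde{\phi}^{k+1}\|_\infty + \|\nabh\tilde{\phi}^{k+1}\|_4 \le 1$. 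The multistep dependence on $k-1$ is accommodated by the three-term version of discrete Gronwall, and the $\nabh\Dh\tau_2^k$ corrections --- which would otherwise threaten an accuracy reduction --- are controlled precisely because of the higher-order construction (\ref{consistency-Phi-3}) of $\Psi_N$.
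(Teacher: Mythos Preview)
Your outline is broadly sound and follows the same energy-estimate architecture as the paper, but it diverges from the paper's proof in two places where the paper's choices are simpler.

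\textbf{Test function.} You pair the $\tilde{\psi}$-equation with $\tilde{\psi}^{k+1/2}$ and then substitute $\tilde{\psi}^{k+1/2} = (\tilde{\phi}^{k+1}-\tilde{\phi}^k)/s - s\tau_2^k$ inside the diffusion terms; this forces you to control $\|\nabh\Dh\tau_2^k\|_2$, which requires roughly $\Phi\in H^4(0,T;H^3)$, more than is assumed in (\ref{MPFC-regularity}). The paper instead takes the inner product directly with $\tilde{\phi}^{k+1}-\tilde{\phi}^k$. Then the convex diffusion terms telescope \emph{exactly}, e.g.\ $-h^2\ciptwo{\tilde{\phi}^{k+1}-\tilde{\phi}^k}{\Dh^3\tilde{\phi}^{k+1/2}} = \tfrac12\bigl(\|\nabh\Dh\tilde{\phi}^{k+1}\|_2^2 - \|\nabh\Dh\tilde{\phi}^k\|_2^2\bigr)$, with no $\tau_2$-correction at the $H^3$ level. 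The $s\tau_2^k$ discrepancy appears only when $\tilde{\phi}^{k+1}-\tilde{\phi}^k$ is rewritten back in terms of $\tilde{\psi}^{k+1/2}$ in the $\beta$-term and the source, where the $L^2$ bound (\ref{discrete-norm-truncation}) on $\tau_2$ suffices. The paper's error energy is also augmented with $\tfrac12\|\Dh(\tilde{\phi}^k-\tilde{\phi}^{k-1})\|_2^2$ (not $\|\nabh(\tilde{\phi}^k-\tilde{\phi}^{k-1})\|_2^2$) via the exact telescoping identity (\ref{(6.48)}).

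\textbf{Bootstrap.} The induction in your last paragraph is unnecessary. You already correctly observe that the coefficients $K_1,\ldots,K_5$ in Lem.~\ref{control-nonlinear} are bounded \emph{uniformly} by the energy-stability estimates (\ref{h2-uniform-bound})--(\ref{W14-uniform-bound}) on $\phi^k$ together with the regularity of $\Phi_N$. With those constants fixed, the bound (\ref{(6.52)}) is \emph{linear} in the error factors $\|\tilde{\phi}^j\|_\infty$, $\|\nabh\tilde{\phi}^j\|_4$, $\|\Dh\tilde{\phi}^j\|_2$, $\|\tilde{\phi}^j\|_2$, and each of these is dominated by $C\bigl(\|\tilde{\phi}^j\|_2 + \|\nabh\Dh\tilde{\phi}^j\|_2\bigr)$ via Lems.~\ref{sobolev-2d-final}, \ref{embedding}, \ref{special-lemma}; the paper records this as (\ref{(6.57)}). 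Hence $\|\Dh\mathcal{N}^{k+1/2}\|_2^2$ is bounded by a fixed constant times the error energy, and the Gronwall loop closes directly. The paper then defines successive energies $F_1,F_2,F_3$, uses Lem.~\ref{special-lemma} with $\epsilon=\tfrac38$ together with a summation bound $\|\tilde{\phi}^\ell\|_2^2\le C s\sum_k\|\tilde{\psi}^k\|_2^2 + O(h^4)$ to absorb the concave part, and applies discrete Gronwall to reach (\ref{(6.39)}).
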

	\begin{proof}
Subtracting (\ref{s-2nd-1}) -- (\ref{s-2nd-3}) from  (\ref{truncation-equation-1}), (\ref{truncation-equation-2}) yields
	\begin{eqnarray}
\beta \, \frac{\tilde{\psi}^{k+1} - \tilde{\psi}^k}{s} &=&  \Dh \biggl( \chi\left(\Phi_N^{k+1},\Phi_N^k\right) - \chi\left(\phi^{k+1},\phi^k\right) + \alpha \tilde{\phi}^{k+1/2}    
	\nonumber 
	\\
&&  +  \Dh \left( 3 \tilde{\phi}^k - \tilde{\phi}^{k-1} \right)  + \Delta_h^2 \tilde{\phi}^{k+1/2}  \biggr) -  \frac{\tilde{\phi}^{k+1} - \tilde{\phi}^k}{s}  +  \tau_1^k \ ,
	\label{error-equation-1}
	\\
\frac{\tilde{\phi}^{k+1} - \tilde{\phi}^k}{s} &=&  \tilde{\psi}^{k+1/2} + s \tau_2^k \ ,
	\label{error-equation-2}
	\end{eqnarray}
where $\tilde{\phi}^{k+1/2} := \frac12 \left( \tilde{\phi}^{k+1} + \tilde{\phi}^k\right)$ and  $\tilde{\psi}^{k+1/2} := \frac12 \left( \tilde{\psi}^{k+1} + \tilde{\psi}^k\right)$.  Taking the inner product with the error difference function $h^2\left(\tilde{\phi}^{k+1} - \tilde{\phi}^k\right)$ gives 
	\begin{eqnarray}
&& \hspace{-0.7in} h^2\, \ciptwo{\tilde{\phi}^{k+1} - \tilde{\phi}^k}{\tau_1^k} + h^2\, \ciptwo{\tilde{\phi}^{k+1} - \tilde{\phi}^k}{ \Dh \left( \chi\left(\Phi_N^{k+1},\Phi_N^k\right) - \chi\left(\phi^{k+1},\phi^k\right) \right)}
 	\nonumber 
	\\
 &=& \frac{\beta h^2}{s} \ciptwo{\tilde{\psi}^{k+1} - \tilde{\psi}^k } {\tilde{\phi}^{k+1} - \tilde{\phi}^k}  + \frac{h^2}{s} \ciptwo{\tilde{\phi}^{k+1} - \tilde{\phi}^k }{\tilde{\phi}^{k+1} - \tilde{\phi}^k}
 	\nonumber 
	\\
&&-  \alpha h^2\, \ciptwo{\tilde{\phi}^{k+1} - \tilde{\phi}^k}{\Dh \tilde{\phi}^{k+1/2}} -   h^2\, \ciptwo{\tilde{\phi}^{k+1} - \tilde{\phi}^k}{\Dh^3  \tilde{\phi}^{k+1/2}}     
	\nonumber 
	\\
& & - h^2\, \ciptwo{\tilde{\phi}^{k+1} - \tilde{\phi}^k} {\Dh^2  \left( 3 \tilde{\phi}^k - \tilde{\phi}^{k-1} \right)} \ . 
	\label{(6.42)}
	\end{eqnarray}
The first term on the right-hand-side of (\ref{(6.42)}) can be rewritten and estimated as follows.  With the help of (\ref{error-equation-2}) and an application of Cauchy's inequality we have
	\begin{eqnarray} 
  &&
\frac{h^2}{s} \ciptwo{\tilde{\phi}^{k+1} - \tilde{\phi}^k}{\tilde{\psi}^{k+1} - \tilde{\psi}^k }  = h^2\, \ciptwo{ \frac{\tilde{\psi}^{k+1} + \tilde{\psi}^k }{2} + s \tau_2^k }
{ \tilde{\psi}^{k+1} - \tilde{\psi}^k }	
	\nonumber
	\\
&=&  \frac12 \left(\nrm{\tilde{\psi}^{k+1}}_2^2 - \nrm{\tilde{\psi}^k}_2^2  \right) +  s h^2 \,  \ciptwo{\tau_2^k}{ \tilde{\psi}^{k+1} - \tilde{\psi}^k }
	\nonumber
	\\
&\ge&  \frac12 \left(\nrm{\tilde{\psi}^{k+1}}_2^2 - \nrm{\tilde{\psi}^k}_2^2  \right) - \frac12 s \nrm{\tau_2^k}_2^2 
 - s \left( \nrm{\tilde{\psi}^{k+1}}_2^2  + \nrm{\tilde{\psi}^k}_2^2 \right) \ .  
	\label{(6.43)} 
	\end{eqnarray} 
The second term on the right-hand-side of (\ref{(6.42)}) is obviously non-negative: 
	\begin{equation}
\frac{h^2}{s} \ciptwo{\tilde{\phi}^{k+1} - \tilde{\phi}^k}{\tilde{\phi}^{k+1} - \tilde{\phi}^k } = s \nrm{ \frac{\tilde{\phi}^{k+1} - \tilde{\phi}^k}{s} }_2^2 \ge 0 \ .
	\label{(6.44)}
	\end{equation}
The first term on the left-hand-side of (\ref{(6.42)}) can be controlled using Cauchy's inequality:  
	\begin{eqnarray}
h^2\, \ciptwo{\tilde{\phi}^{k+1} - \tilde{\phi}^k}{\tau_1^k}  &=&  s h^2\, \ciptwo{ \frac{\tilde{\psi}^{k+1} + \tilde{\psi}^k }{2} + s \tau^k_2}{\tau_1^k}  
	\nonumber 
	\\
&\le&  \frac{s}{4} \left( \nrm{\tilde{\psi}^{k+1} }_2^2 + \nrm{\tilde{\psi}^k }_2^2  \right)  +  s \left( \nrm{\tau_1^k}_2^2 + \frac{s^2}{2} \nrm{\tau_2^k}_2^2  \right) \ . 
	\label{(6.45)}
	\end{eqnarray}
The analysis of the convex diffusion terms can be carried out with the help of the discrete Green's identities~(\ref{green1stthm-2d}) and (\ref{green2ndthm-2d}):
	\begin{equation} 
-h^2\, \ciptwo{\tilde{\phi}^{k+1} - \tilde{\phi}^k}{\Dh \tilde{\phi}^{k+1/2}} = \frac12 \left( \nrm{\nabla_h \tilde{\phi}^{k+1} }_2^2  - \nrm{\nabla_h \tilde{\phi}^k }_2^2  \right)
	\label{(6.46)}  
	\end{equation} 
and 
	\begin{equation}  
-h^2\, \ciptwo{\tilde{\phi}^{k+1} - \tilde{\phi}^k}{\Dh^3 \tilde{\phi}^{k+1/2} } 	=  \frac12 \left( \nrm{\nabla_h\left( \Dh \tilde{\phi}^{k+1}\right) }_2^2 -  \nrm{\nabla_h\left( \Dh \tilde{\phi}^k\right)  }_2^2  \right) \ .
	\label{(6.47)}   
	\end{equation}
The concave diffusion term can be handled with the identity 
	\begin{eqnarray} 
- h^2\, \ciptwo{\tilde{\phi}^{k+1} - \tilde{\phi}^k }{\Dh^2 \left( 3 \tilde{\phi}^k - \tilde{\phi}^{k-1} \right) }  &=&  -  \nrm{\Dh \tilde{\phi}^{k+1} }_2^2 +  \frac12  \nrm{\Dh \left( \tilde{\phi}^{k+1} - \tilde{\phi}^k  \right) }_2^2 
	\nonumber 
	\\
&&  + \nrm{\Dh \tilde{\phi}^k }_2^2  -  \frac12\nrm{\Dh \left( \tilde{\phi}^k - \tilde{\phi}^{k-1}  \right) }_2^2 
	\nonumber
	\\
&&+\frac{1}{2}\nrm{\Dh\left(\tilde{\phi}^{k+1}-2\tilde{\phi}^k+\tilde{\phi}^{k-1} \right)}_2^2 \ .
	\label{(6.48)}
	\end{eqnarray} 
For the nonlinear term, we start with an application of Cauchy's inequality: 
	\begin{eqnarray}
&& \hspace{-0.5in} h^2\,\ciptwo{\tilde{\phi}^{k+1} - \tilde{\phi}^k }{\Dh \left\{ \chi\left(\Phi_N^{k+1},\Phi_N^k \right) - \chi\left(\phi^{k+1},\phi^k\right) \right\} }
	\nonumber  
	\\
&=& s h^2\,\ciptwo{ \frac{\tilde{\psi}^{k+1} + \tilde{\psi}^k }{2} + s \tau^k_2 }{\Dh \left\{ \chi\left(\Phi_N^{k+1},\Phi_N^k \right) - \chi\left(\phi^{k+1},\phi^k\right) \right\} }
	\nonumber
	\\
&\le& \frac{s}{2}  \left( \nrm{\tilde{\psi}^{k+1} }^2_2  + \nrm{\tilde{\psi}^k }^2_2 + 2 s^2 \left\| \tau^k_2 \right\|_2^2 \right)   	\nonumber 
	\\
&&  + \frac{s}{2}\nrm{ \Dh\left\{ \chi\left(\Phi_N^{k+1},\Phi_N^k \right) - \chi\left(\phi^{k+1},\phi^k\right) \right\} }^2_2 \ .
	\label{(6.49)} 
	\end{eqnarray} 
Lem.~\ref{control-nonlinear} can be used to bound the last term appearing above.  In more details, the following uniform (in time) estimates are recalled from our previous lemmas:
	\begin{eqnarray} 
\nrm{ \Phi_N^l }_\infty &\le&  \nrm{ \Phi_N^l }_{L^\infty} \le C_{15} \ ,  \quad \nrm{ \nabla_h \Phi_N^l }_4 \le C \nrm{ \nabla \Phi_N^l}_{L^\infty} +C \le C \ ,  
	\label{(6.49.b)}
	\\
\nrm{ \Delta_h^x \phi^l }_2 &\le&  C \nrm{ \phi^l }_{2,2} \le C C_{9} \ ,  \quad  \nrm{ \Delta_h^y \phi^l }_2 \le  C \nrm{ \phi^l }_{2,2} \le C_{9} \ , 
	\\
\nrm{ \phi^l }_\infty &\le& C_{10} \ ,  \quad \nrm{ \nabla_h \phi^l }_4  \le C_{11} \ ,
	\end{eqnarray}
and the following estimates are valid on the finite time interval $[0,T]$:
	\begin{eqnarray}
\nrm{ \nabla_h \Phi^{l} }_\infty &\le& \nrm{ \nabla \Phi^{l} }_{L^\infty} + C \le C \ ,
	\\
\nrm{\Dh^x\Phi^{l}}_\infty+\nrm{\Dh^y\Phi^{l}}_\infty 
&\le& \nrm{\partial_{xx}\Phi^{l}}_{L^\infty}+\nrm{\partial_{yy}\Phi^{l}}_{L^\infty}  + C 
\le C \ , 
	\label{(6.56.b)}
	\end{eqnarray}
for $l=k, k+1$, where $C$ denotes a generic positive constant that is independent of $h$. 
Applying Lem.~\ref{embedding}, Lem.~\ref{sobolev-2d-final}, and substituting estimates (\ref{(6.49.b)}) -- (\ref{(6.56.b)}) yield 
	\begin{eqnarray}
&& \hspace{-0.75in} \nrm{ \Dh \left\{ \chi\left(\Phi_N^{k+1},\Phi_N^k \right) - \chi\left(\phi^{k+1},\phi^k\right) \right\} }_2  \nonumber 
	\\
&\le&  C_{19} \left( \nrm{ \tilde{\phi}^{k+1} }_2 + \nrm{ \tilde{\phi}^k }_2 + \nrm{ \Dh \tilde{\phi}^{k+1} }_2  + \nrm{ \Dh \tilde{\phi}^k }_2 \right) \ ,  \label{(6.57)}
	\end{eqnarray}
where $C_{19}>0$ is independent of $h$ and $s$, but is dependent upon $T$ and also the exact solution $\Phi$.  Going back to (\ref{(6.49)}) and using the last estimate and Lem.~\ref{special-lemma} (with $\epsilon =1$) we obtain an estimate for the nonlinear term: 
	\begin{eqnarray} 
&&\hspace{-0.7in}  h^2\,\ciptwo{\tilde{\phi}^{k+1} - \tilde{\phi}^k }{\Dh 
\left\{ \chi\left(\Phi_N^{k+1},\Phi_N^k \right) - \chi\left(\phi^{k+1},\phi^k\right) \right\} }
	\nonumber
	\\
&\le&  \frac{s}{2}  \left( \nrm{\tilde{\psi}^{k+1} }^2_2 + \nrm{\tilde{\psi}^k }^2_2 + 2 s^2 \left\| \tau_2 \right\|_2^2 \right)  	\nonumber 
	\\
&&+ 2 s \,  C_{19}^2\left( \nrm{ \tilde{\phi}^{k+1} }^2_2 + \nrm{ \tilde{\phi}^k }^2_2  + \nrm{ \Dh \tilde{\phi}^{k+1} }^2_2  + \nrm{ \Dh \tilde{\phi}^k }^2_2 \right)
	\nonumber 
	\\
&\le&   \frac{s}{2}  \left( \nrm{\tilde{\psi}^{k+1} }^2_2  
  + \nrm{\tilde{\psi}^k }^2_2 + 2 s^2 \left\| \tau_2 \right\|_2^2 \right)  	\nonumber 
	\\
&& +  \frac{8s \,  C_{19}^2}{3}\left( \nrm{ \tilde{\phi}^{k+1} }_2^2 +  \nrm{ \tilde{\phi}^k }_2^2 + \nrm{ \nabla_h \Dh \tilde{\phi}^{k+1} }_2^2 + \nrm{ \nabla_h \Dh \tilde{\phi}^k  }_2^2     \right) \ . 
	\label{(6.58)} 
	\end{eqnarray}

Define a modified energy for the error function via
	\begin{eqnarray} 
F_1 \left( \tilde{\phi}^k \right)  &:=& \frac{\beta}{2} \nrm{\tilde{\psi}^k }_2^2 + \frac{\alpha}{2} \nrm{ \nabla_h \tilde{\phi}^k }_2^2 
+ \frac{1}{2}\nrm{\nabla_h\left( \Dh \tilde{\phi}^k\right)  }_2^2 
-  \nrm{ \Dh \tilde{\phi}^k }_2^2   \nonumber 
\\
  &&
+ \frac12 \nrm{\Dh \left( \tilde{\phi}^k - \tilde{\phi}^{k-1}  \right) }_2^2 \ .  \label{(6.60)}
	\end{eqnarray} 
A combination of (\ref{(6.42)}), (\ref{(6.44)}) -- (\ref{(6.48)}) and (\ref{(6.58)}) results in
	\begin{eqnarray} 
F_1 \left( \tilde{\phi}^{k+1} \right)  -  F_1 \left( \tilde{\phi}^k \right)   &\le&   
s\,  C_{20} \Biggl( \nrm{ \tilde{\phi}^{k+1} }^2 + \nrm{ \tilde{\phi}^k }^2 
 +  \nrm{\tilde{\psi}^{k+1} }_2^2  +  \nrm{\tilde{\psi}^k }_2^2  
   \nonumber
	\\
&&  +  \nrm{ \nabh\left( \Dh \tilde{\phi}^{k+1}\right) }_2^2  
  +  \nrm{ \nabh\left( \Dh \tilde{\phi}^k \right) }_2^2  \Biggr)  \nonumber 
\\
  &&
  +  C s \left( \nrm{\tau_1^k}_2^2 + \nrm{\tau_2^k}_2^2  \right) \ , 
	\label{(6.61)} 
	\end{eqnarray} 
where $C_{20}>0$ is independent of $h$ and $s$.
Summing over $k$ and using the fact that $F_1\left(\tilde{\phi}^1\right) \le C h^4$  yields 
	\begin{eqnarray} 
F_1\left( \tilde{\phi}^\ell \right) &\le&   2 s\, C_{20} \sum_{k=1}^\ell 
\left( \nrm{ \tilde{\phi}^k }_2^2 + \nrm{ \nabh\left( \Dh \tilde{\phi}^k\right)  }_2^2 +  \nrm{ \tilde{\psi}^k }_2^2  \right)   \nonumber 
\\
  &&
+  C \left( \nrm{\tau_1}_{L^2_s \left(0,T;L_h^2(\Omega)\right)}^2  
+  \nrm{\tau_2}_{L^2_s \left(0,T;L_h^2(\Omega)\right)}^2 \right) \nonumber
	\\
&\le&   2 s\, C_{20} \sum_{k=1}^\ell \left( \nrm{ \tilde{\phi}^k }_2^2 + \nrm{ \nabh\left( \Dh \tilde{\phi}^k\right)  }_2^2 +  \nrm{ \tilde{\psi}^k }_2^2  \right)   \nonumber 
\\
  && 
+  C M^2 T (s^2 + h^2)^2  .  \label{(6.62)}
	\end{eqnarray} 
To carry out further analysis, we introduce the positive part $F_1$: 
	\begin{eqnarray} 
F_2\left( \tilde{\phi}^k \right) &:=& \frac{\beta}{2} \nrm{\tilde{\psi}^k }^2  + \frac{\alpha}{2} \nrm{ \nabla_h \tilde{\phi}^k }^2 
+ \frac{1}{2}\nrm{ \nabh\left( \Dh \tilde{\phi}^k\right) }_2^2   \nonumber 
\\
  && 
+ \frac12 \nrm{\Dh \left( \tilde{\phi}^k - \tilde{\phi}^{k-1}  \right) }_2^2  
    =  F_1\left( \tilde{\phi}^k \right)  +  \nrm{ \Dh \tilde{\phi}^k }_2^2  \ ,  
	\label{(6.63)}
	\end{eqnarray} 
so that (\ref{(6.62)}) becomes  
	\begin{eqnarray} 
F_2\left( \tilde{\phi}^\ell \right)  &\le&  2 s\, C_{20} \sum_{k=1}^\ell \left( \nrm{ \tilde{\phi}^k }_2^2 + \nrm{ \nabh \left(\Dh \tilde{\phi}^k\right) }_2^2 +  \nrm{ \tilde{\psi}^k }_2^2  \right)  \nonumber  
	\\
&&+   \nrm{ \Dh \tilde{\phi}^\ell }^2 +  C M^2 T (s^2 + h^2)^2 \ . 
	\label{(6.64)}
	\end{eqnarray} 
To estimate the additional term $\nrm{\Dh \tilde{\phi}^\ell }_2^2$, we need a bound of 
$\nrm{\tilde{\phi}^\ell }_2$	 in terms of $\tilde{\psi}^k$. The following identity is observed: 
\begin{eqnarray} 
  \tilde{\phi}^\ell = \tilde{\phi}^0 
  + s \sum_{k=1}^\ell \frac{\tilde{\phi}^k - \tilde{\phi}^{k-1}}{s}  
  = \tilde{\phi}^0 + s \sum_{k=1}^\ell   
  \left( \frac{\tilde{\psi}^{k} + \tilde{\psi}^{k-1} }{2} + s \tau_2^k  \right) ,  
  \label{(6.65.a)}
\end{eqnarray} 
with error equation (\ref{error-equation-2}) used in the last step. 
In turn, an application of Cauchy inequality shows that 
\begin{eqnarray} 
  \nrm{\tilde{\phi}^\ell }_2^2  
  &\le& 
  2 \left\| \tilde{\phi}^0  \right\|_2^2 
  + 4 s T \sum_{k=1}^\ell \left\| \tilde{\psi}^k  \right\|_2^2 
  + 4 s^3 T \nrm{\tau_2}_{L^2_s \left(0,T;L_h^2(\Omega)\right)}^2  \nonumber 
\\
  &\le& 
  4 s T \sum_{k=1}^\ell \left\| \tilde{\psi}^k  \right\|_2^2 
  + C \left( h^4 + s^2 (s^4 + h^4 ) T \right) ,  \label{(6.65.b)}
\end{eqnarray}
in which the fact that $\left\| \tilde{\phi}^0  \right\|_2 \le C h^2$ (which comes from the 
construction (\ref{consistency-Phi-2}) of the approximate solution and the initial 
numerical data $\phi^0_{i,j} = \Phi^0_{i,j}$), along with the truncation error analysis 
(\ref{discrete-norm-truncation}).   
Therefore, using Lem.~\ref{special-lemma} shows that  
	\begin{eqnarray} 
 \nrm{\Dh \tilde{\phi}^\ell }_2^2  &\le&  \frac{1}{3 \epsilon^2} \nrm{\tilde{\phi}^\ell }_2^2 + \frac{2 \epsilon}{3}  \nrm{ \nabh\left( \Dh \tilde{\phi}^\ell\right) }_2^2  \nonumber 
	\\  
&\le&  \frac{2 s\, T}{3\epsilon^2}  \sum_{k=1}^\ell \nrm{ \tilde{\psi}^k }_2^2   +   \frac{2 \epsilon}{3}  \nrm{ \nabh\left( \Dh \tilde{\phi}^\ell\right) }_2^2  + C (s^4 + h^4 ) \ ,\label{(6.65)}
	\end{eqnarray}
for any $\epsilon  >  0$, with a trivial requirement that $s^2 T \le 1$.  Taking $ \epsilon = \frac{3}{8}$, the substitution of the estimate (\ref{(6.65)}) 
into (\ref{(6.64)}) shows that 
	\begin{eqnarray} 
F_2\left( \tilde{\phi}^\ell \right) - \frac{1}{4}\nrm{ \nabh\left( \Dh \tilde{\phi}^\ell\right) }_2^2  &\le&  s\, C_{21} \sum_{k=1}^\ell \left( \nrm{ \tilde{\phi}^k }_2^2+ \nrm{ \nabh \left(\Dh \tilde{\phi}^k \right) }_2^2 +  \nrm{ \tilde{\psi}^k }_2^2  \right) \nonumber
	\\
&&+  C M^2 T (s^2 + h^2)^2 \ , \label{(6.67)}
	\end{eqnarray} 
where $C_{21}>0$ is independent of $h$ and $s$. Introducing the more refined energy 
	\begin{eqnarray} 
F_3\left( \tilde{\phi}^k \right)  &:=& \frac{\beta}{2} \nrm{ \tilde{\psi}^k }_2^2 + \frac{\alpha}{2} \nrm{ \nabh \tilde{\phi}^k }_2^2 +  \frac{1}{4} \nrm{ \nabh\left( \Dh \tilde{\phi}^k\right)  }_2^2  
    \nonumber 
\\
  && 
+ \frac12 \nrm{\Dh \left( \tilde{\phi}^k - \tilde{\phi}^{k-1}  \right) }_2^2
   = F_2 \left( \tilde{\phi}^k \right)-\frac{1}{4}\nrm{ \nabh\left( \Dh \tilde{\phi}^k\right) }_2^2 \ ,
	\label{(6.68)} 
	\end{eqnarray} 
we obtain, with the aid of the estimate  (\ref{(6.65.b)}), 
	\begin{equation} 
F_3\left( \tilde{\phi}^\ell \right) \le   s \, C_{22} \sum_{k=1}^\ell F_3\left( \tilde{\phi}^k \right) + s^2 T \, C_{21} \sum_{k=1}^\ell \sum_{\ell'=1}^k  \nrm{ \tilde{\psi}^{\ell'} }_2^2 
+ C M^2 T (s^2 + h^2)^2 \ ,  
	\label{(6.69-a)}
	\end{equation}
where $C_{22}>0$ is independent of $h$ and $s$. Meanwhile, motivated by the estimate
	\begin{equation} 
  s^2 T \, C_{21} \sum_{k=1}^\ell \sum_{\ell'=1}^k  \nrm{ \tilde{\psi}^{\ell'} }_2^2 
\le  s^2 T \, C_{21} \sum_{k=1}^\ell \sum_{\ell'=1}^k \nrm{ \tilde{\psi}^{\ell'} }_2^2 \le s T^2 \, C_{21} \sum_{k=1}^\ell  \nrm{ \tilde{\psi}^{k} }_2^2  \ ,  
	\end{equation}
which follows from 
	\begin{equation}
\sum_{\ell'=1}^k  \nrm{ \tilde{\psi}^{\ell'} }_2^2 \le \sum_{\ell'=1}^\ell  \nrm{ \tilde{\psi}^{\ell'} }_2^2   ,  \quad \forall \  k \le \ell \ ,
	\label{(6.69-b)}
	\end{equation}
we arrive at       
	\begin{equation} 
F_3\left( \tilde{\phi}^\ell \right) \le   s \, C_{23} \sum_{k=1}^\ell F_3\left( \tilde{\phi}^k \right) 
+ C M^2 T (s^2 + h^2)^2 \ ,  
	\label{(6.69)}
	\end{equation}
where $C_{23}>0$ is independent of $h$ and $s$. Applying a discrete Grownwall inequality gives 
	\begin{equation} 
F_3\left( \tilde{\phi}^\ell \right) \le  C_{24} \left( s^2 + h^2\right)^2 \ ,   \label{(6.70)}
	\end{equation}
which holds provided $s$ is sufficiently small. Note that $C_{24}$ is a positive constant that is dependent upon $T$ (exponentially) and $\Phi$, but is independent of $h$ and $s$.
	\end{proof}

	\begin{corollary}
	\label{cor-global-error-estimate}
Define $\Tilde{\Tilde{\phi}}_{i,j}^k := \Phi^k\left(p_i,p_j,k\, s\right)-\phi_{i,j}^k$.  Then
	\begin{equation}
\nrm{\Tilde{\Tilde{\phi}}^k}_2 +  \nrm{\nabh \left(\Dh \Tilde{\Tilde{\phi}}^k\right)}_2  
\le C\left( s^2 + h^2 \right) \ ,
	\label{estimate-final}
	\end{equation} 
provided $s$ is sufficiently small, for some $C>0$ that is independent of $h$ and $s$.
	\end{corollary}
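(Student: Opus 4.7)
The strategy is the ``triangle inequality'' step alluded to in the roadmap just before Thm.~\ref{thm-local-truncation-error}: I split the global error into (i) the numerical error relative to the Fourier projection, which is already controlled by Thm.~\ref{thm-projection-error-estimate}, and (ii) a purely analytic projection error $\Phi^k-\Phi_N^k$ evaluated on the grid, which I will bound using the spectral accuracy of $\mathcal{P}_N$. Concretely, on the grid $\{(p_i,p_j)\}$ I would write
\begin{equation*}
\Tilde{\Tilde{\phi}}^k_{i,j} \;=\; \bigl(\Phi^k(p_i,p_j)-\Phi_N^k(p_i,p_j)\bigr) \;+\; \tilde{\phi}^k_{i,j},
\end{equation*}
so that by the triangle inequality
\begin{equation*}
\nrm{\Tilde{\Tilde{\phi}}^k}_2 + \nrm{\nabh(\Dh \Tilde{\Tilde{\phi}}^k)}_2
\;\le\; \nrm{(\Phi^k-\Phi_N^k)\big|_{\rm grid}}_2 + \nrm{\nabh\Dh(\Phi^k-\Phi_N^k)\big|_{\rm grid}}_2 + \nrm{\tilde{\phi}^k}_2 + \nrm{\nabh(\Dh \tilde{\phi}^k)}_2 .
\end{equation*}
The last two terms are $O(s^2+h^2)$ directly by Thm.~\ref{thm-projection-error-estimate}, so only the projection-error terms remain.

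Next I would bound those projection-error terms using the regularity assumption $\Phi\in L^\infty(0,T;H^{8}(\Omega))$ from (\ref{MPFC-regularity}) together with Parseval's identity on $\mathcal{B}^{N/2}$. Since $\mathcal{P}_N$ truncates Fourier modes with $|k|,|l|>N/2$, standard spectral estimates give
\begin{equation*}
\nrm{\Phi^k-\Phi_N^k}_{H^m(\Omega)} \;\le\; C\, N^{m-8}\, \nrm{\Phi}_{L^\infty(0,T;H^8(\Omega))} \;\le\; C\, h^{\,8-m},
\end{equation*}
for any $0\le m\le 8$, where $h=L/N$. Passing from a continuous function to its grid-sampled discrete norm is harmless here: for any smooth $f$ on $\Omega$ the crude bound $\nrm{f|_{\rm grid}}_2\le C\, \nrm{f}_{L^\infty(\Omega)}$, combined with Sobolev embedding $H^2(\Omega)\hookrightarrow L^\infty(\Omega)$ in two dimensions, gives $\nrm{(\Phi^k-\Phi_N^k)|_{\rm grid}}_2\le C h^{6}$. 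For the third-order discrete operator $\nabh\Dh$ I would use consistency of finite differences: for smooth $f$, $|\nabh\Dh f(p_i,p_j)|\le C\nrm{f}_{W^{3,\infty}(\Omega)}$, and then Sobolev embedding $H^{5}(\Omega)\hookrightarrow W^{3,\infty}(\Omega)$ (valid in 2D) yields
\begin{equation*}
\nrm{\nabh\Dh(\Phi^k-\Phi_N^k)\big|_{\rm grid}}_2 \;\le\; C\,\nrm{\Phi^k-\Phi_N^k}_{H^5(\Omega)} \;\le\; C h^{3}.
\end{equation*}

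Both spectral contributions are $o(h^2)$, so they are absorbed into the $C(s^2+h^2)$ bound coming from Thm.~\ref{thm-projection-error-estimate}. Assembling the triangle inequality delivers (\ref{estimate-final}). I do not anticipate any real obstacle: the only subtle point is the passage from continuous to discrete norms for the truncated tail $\Phi-\Phi_N$, and that is handled uniformly by the high regularity in (\ref{MPFC-regularity}), which was exactly the reason for projecting onto $\mathcal{B}^{N/2}$ in the first place (cf.\ the remark following Thm.~\ref{thm-local-truncation-error}).
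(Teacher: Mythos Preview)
Your proposal is correct and follows essentially the same route as the paper: split $\Tilde{\Tilde{\phi}}^k$ via the triangle inequality into $\tilde{\phi}^k$ (handled by Thm.~\ref{thm-projection-error-estimate}) plus the grid-sampled spectral projection error $\Phi^k-\Phi_N^k$, and control the latter by the approximation estimate~(\ref{consistency-Phi-4}) together with the $L^\infty(0,T;H^8)$ regularity in~(\ref{MPFC-regularity}). The paper's proof is terser and does not spell out the passage from continuous to discrete norms, but your added detail (via $H^2\hookrightarrow L^\infty$ and $H^5\hookrightarrow W^{3,\infty}$ in 2D) is a legitimate way to make that step precise.
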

	 
	\begin{proof}
Estimate (\ref{(6.70)}) gives the discrete $H^3$ estimate for $\tilde{\phi}$.  For the projection $\Phi_N$, we have the following approximation estimate:
	\begin{eqnarray} 
\nrm{\Phi_N - \Phi}_{L^\infty \left(0,T; H^r\right) }  \le  C  h^m \nrm{\Phi}_{L^\infty \left(0,T; H^{m+r}\right) } \ ,
	\label{consistency-Phi-4}
	\end{eqnarray}
for $m,\, r \ge 0$. See for example the references \cite{Boyd2001, CQ1982, HGG2007}.  Combining estimate (\ref{(6.70)}) with the approximation result (\ref{consistency-Phi-4}), we can obtain the estimate (\ref{estimate-final}).
	\end{proof}
	
	\begin{rem}
By virtue of (\ref{estimate-final}) and Lems.~\ref{sobolev-2d-final} and \ref{special-lemma}, along with the estimate (\ref{(6.65.b)}), we immediately get an error estimate of the form 	
	\begin{equation}
\nrm{\Tilde{\Tilde{\phi}}^k}_\infty  \le C\left(h^2+s^2 \right).
	\end{equation} 
	\end{rem}

	\section{Conclusions}\label{sec-conclusions}
	
In this paper, we have established the convergence analysis of an unconditionally energy stable second order accurate finite difference scheme for the sixth-order Modified Phase Field Crystal (MPFC) equation. The parabolic Phase Field Crystal (PFC) equation, which is a mass conserving gradient flow, is obtained as a special case of the MPFC equation. The numerical scheme is based on a second order convex splitting of a discrete psuedo-energy and is semi-implicit.

	\appendix
	
	\section{Tools for Cell-Centered Finite Differences}
	\label{app-tools}

In this first appendix, we define the summation-by-parts formulae, discrete norms, and estimates in two space dimensions that are used to define and analyze our finite difference scheme. With some exceptions, the theory will extend straightforwardly to three-dimensions.  Here we use the same notation and results for 2D cell-centered functions as from~\cite[Sec.~2]{wise09}.  The reader is directed there for all of the missing details.  

For simplicity, we assume that $\Omega = (0,L_x)\times(0,L_y)$.  The framework that we describe has a straightforward extension to three space dimensions. Here we use the notation and results for cell-centered functions from~\cite{wise10,wise09}; see also~\cite{hu09,wang11a}.  The reader is directed to those references for more complete details.  We begin with definitions of grid functions and difference operators needed for our discretization of two-dimensional space.  Let $\Omega = (0,L_x)\times(0,L_y)$, with $L_x = m\cdot h$ and $L_y = n\cdot h$, where $m$ and $n$ are positive integers and $h>0$ is the spatial step size. Define $p_r := (r-\hf)\cdot h$, where $r$ takes on integer and half-integer values.   For any positive integer $\ell$, define $E_\ell = \left\{ p_r \ \middle|\ r=\frac{1}{2},\ldots, \ell+\frac{1}{2}\right\}$, $C_\ell = \left\{p_r \ \middle|\ r=1,\ldots, \ell\right\}$,  $C_{\overline{\ell}} = \left\{ p_r\cdot h\ \middle|\ r=0,\ldots, \ell+1\right\}$. Define the function spaces
	\begin{eqnarray}
{\mathcal C}_{m\times n} &=& \left\{\phi: C_m\times C_n \rightarrow \mathbb{R} \right\},\  {\mathcal C}_{\overline{m}\times\overline{n}} = \left\{\phi: C_{\overline{m}}\times C_{\overline{n}}\rightarrow \mathbb{R} \right\}  ,
    \\
{\mathcal C}_{\overline{m}\times n} &=& \left\{\phi: C_{\overline{m}}\times C_n \rightarrow \mathbb{R} \right\},\ {\mathcal C}_{m\times\overline{n}} = \left\{\phi: C_m\times C_{\overline{n}} \rightarrow \mathbb{R} \right\}  ,
	\\
{\mathcal E}^{\rm ew}_{m\times n} &=& \left\{u: E_m\times C_n \rightarrow\mathbb{R} \right\},\ {\mathcal E}^{\rm ns}_{m\times n} = \left\{v: C_m\times E_n \rightarrow\mathbb{R}  \right\}	 ,
	\\
{\mathcal E}^{\rm ew}_{m\times \overline{n}} &=& \left\{u: E_m\times C_{\overline{n}} \rightarrow\mathbb{R} \right\},\ {\mathcal E}^{\rm ns}_{\overline{m}\times n} = \left\{v: C_{\overline{m}}\times E_n \rightarrow\mathbb{R}  \right\}	 .
  \end{eqnarray}
We use the notation $\phi_{i,j} := \phi\left(p_i,p_j\right)$ for \emph{cell-centered} functions, those in the spaces ${\mathcal C}_{m\times n}$, ${\mathcal C}_{\overline{m}\times n}$, ${\mathcal C}_{m\times\overline{n}}$, or ${\mathcal C}_{\overline{m}\times\overline{n}}$. In component form \emph{east-west edge-centered} functions, those in the spaces ${\mathcal E}^{\rm ew}_{m\times n}$ or ${\mathcal E}^{\rm ew}_{m\times \overline{n}}$, are identified via $u_{i+\hf,j}:=u(p_{i+\hf},p_j)$.  In component form \emph{north-south edge-centered} functions, those in the spaces ${\mathcal E}^{\rm ns}_{m\times n}$, or ${\mathcal E}^{\rm ns}_{\overline{m}\times n}$, are identified via $u_{i+\hf,j}:=u(p_{i+\hf},p_j)$.  The functions of ${\mathcal V}_{m\times n}$ are called \emph{vertex-centered} functions.

We need the weighted 2D grid inner-products $\ciptwo{\, \cdot \,}{\, \cdot \,}$, $\eipew{\, \cdot \,}{\, \cdot \,}$, $\eipns{\, \cdot \,}{\, \cdot \,}$ that are defined in~\cite{wise10,wise09}.  In addition to these, we also need the following one-dimensional inner-products:
	\begin{equation}
\cip{f_{\star,j+\hf}}{g_{\star,j+\hf}} = \sum_{i=1}^m f_{i,j+\hf}g_{i,j+\hf} , \quad \cip{f_{i+\hf,\star}}{g_{i+\hf,\star}} = \sum_{j=1}^n f_{i+\hf,j}g_{i+\hf,j}  ,	
	\end{equation}
where the first is defined for $f,\, g\in{\mathcal E}^{\rm ns}_{m\times n}$, and the second for $f,\, g\in{\mathcal E}^{\rm ew}_{m\times n}$.

The reader is referred to~\cite{wise10,wise09} for the precise definitions of the edge-to-center difference operators  $d_x : {\mathcal E}_{m\times n}^{\rm ew}\rightarrow{\mathcal C}_{m\times n}$ and $d_y : {\mathcal E}_{m\times n}^{\rm ns}\rightarrow{\mathcal C}_{m\times n}$; the $x-$dimension center-to-edge average and difference operators, respectively, $A_x,\, D_x: {\mathcal C}_{\overline{m}\times n}\rightarrow{\mathcal E}_{m\times n}^{\rm ew}$;  the $y-$dimension center-to-edge average and difference operators, respectively, $A_y,\, D_y: {\mathcal C}_{m\times \overline{n}}\rightarrow{\mathcal E}_{m\times n}^{\rm ns}$; and the standard 2D discrete Laplacian, $\Dh : {\mathcal C}_{\overline{m}\times\overline{n}}\rightarrow{\mathcal C}_{m\times n}$.

The summation by parts formula we need from~\cite{wise09} are the following:
	\begin{proposition}\label{sbp-2D-edge}
{\em (Summation-By-Parts:)} If $\phi\in{\mathcal C}_{\overline{m}\times n} \cup{\mathcal C}_{\overline{m}\times\overline{n}}$ and $f\in{\mathcal E}_{m\times n}^{\rm ew}$ then
	\begin{eqnarray}
h^2\, \eipew{D_x \phi}{f} &=& -h^2\, \ciptwo{\phi}{d_x f}  \nonumber
	\\
  &&
 -h\, \cip{A_x\phi_{\hf,\star}}{f_{\hf,\star}}+h\, \cip{A_x\phi_{m+\hf,\star}}{f_{m+\hf,\star}},
	\end{eqnarray}
and if $\phi\in{\mathcal C}_{m\times\overline{n}} \cup {\mathcal C}_{\overline{m}\times\overline{n}}$ and $f\in{\mathcal E}_{m\times n}^{\rm ns}$ then
	\begin{eqnarray}
h^2\, \eipns{D_y\phi}{f} &=& -h^2\, \ciptwo{\phi}{d_y f} \nonumber
	\\
&&-h\, \cip{A_y\phi_{\star,\hf}}{f_{\star,\hf}}+h\, \cip{A_y\phi_{\star,n+\hf}}{f_{\star,n+\hf}}.
	\end{eqnarray}
	\end{proposition}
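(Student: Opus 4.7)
The proof is a direct discrete integration-by-parts (Abel summation) in the $x$-index, with the bookkeeping needed to convert ghost-cell values into edge averages $A_x\phi$ at the two boundaries $i=\hf$ and $i=m+\hf$. First I would unfold the edge inner product $h^2\,\eipew{D_x\phi}{f}$ into a double sum over $j\in\{1,\ldots,n\}$ and the edge index $i+\hf$, using the weights built into $\eipew{\cdot\|\cdot}$ as defined in~\cite[Sec.~2]{wise09}: unit weight at interior edges $i=1,\ldots,m-1$, half weight at the two boundary edges $i=0,\,m$. For each fixed $j$ I would then substitute $(D_x\phi)_{i+\hf,j} = h^{-1}(\phi_{i+1,j}-\phi_{i,j})$ and shift the summation index so that the difference is moved from $\phi$ onto $f$, producing $-\phi_{i,j}(f_{i+\hf,j}-f_{i-\hf,j})/h = -\phi_{i,j}\,d_x f_{i,j}$ at interior cells.

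After the index shift, the surviving contributions split into two groups. The interior group, assembled with unit weight in $j$, reproduces exactly $-h^2\,\ciptwo{\phi}{d_x f}$ by the definitions of $d_x$ and of the cell-centered inner product $\ciptwo{\cdot\|\cdot}$. The remaining terms are boundary pieces involving the ghost rows $\phi_{0,j}$ and $\phi_{m+1,j}$ together with the adjacent interior rows $\phi_{1,j}$ and $\phi_{m,j}$, each paired with the boundary edge values $f_{\hf,j}$ and $f_{m+\hf,j}$.

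The key combinatorial step is to show that, once the $\hf$-weighting at the two boundary edges is incorporated, these leftover pieces combine cleanly into
\[
-h\,\cip{A_x\phi_{\hf,\star}}{f_{\hf,\star}} + h\,\cip{A_x\phi_{m+\hf,\star}}{f_{m+\hf,\star}},
\]
where by definition $A_x\phi_{\hf,j} = (\phi_{0,j}+\phi_{1,j})/2$ and $A_x\phi_{m+\hf,j} = (\phi_{m,j}+\phi_{m+1,j})/2$. The factor $\hf$ built into the boundary-edge weighting of $\eipew{\cdot\|\cdot}$ is precisely what supplies the ``missing half'' of $\phi_{1,j}\,f_{\hf,j}$ (respectively $\phi_{m,j}\,f_{m+\hf,j}$) that the Abel boundary term leaves behind, so the pointwise ghost values reassemble into full edge averages. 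The $y$-identity then follows by an identical argument with the roles of $x$ and $y$ swapped; the symmetry of the cell-centered framework makes this a line-for-line transcription.

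The only real source of difficulty is keeping the boundary weights consistent across $\eipew{\cdot\|\cdot}$, the one-dimensional inner product $\cip{\cdot}{\cdot}$, and the average operator $A_x$, so that the ghost values reassemble exactly as the desired edge averages with the correct signs. This is routine once the conventions are fixed, which is why the result is quoted rather than reproved in the paper: Prop.~\ref{sbp-2D-edge} is essentially a restatement of the corresponding identity in~\cite{wise09}.
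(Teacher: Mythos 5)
Your argument is correct: the Abel summation in the $x$-index, combined with the half-weights at the two boundary edges built into $\eipew{\cdot}{\cdot}$, does reassemble the leftover ghost-cell terms into the edge averages $A_x\phi_{\hf,\star}$ and $A_x\phi_{m+\hf,\star}$ exactly as you describe, and the $y$-identity follows by symmetry. The paper itself offers no proof of this proposition --- it is quoted directly from the reference \cite{wise09} --- and your sketch is the standard derivation given there.
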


	\begin{proposition}
\emph{(Discrete Green's Identities:)} Let $\phi,\, \psi\in {\mathcal C}_{\overline{m}\times\overline{n}}$.  Then
	\begin{eqnarray}
&&h^2\, \eipew{D_x\phi}{D_x\psi} + h^2\, \eipns{D_y\phi}{D_y\psi}
	\nonumber
	\\
&=& -h^2\, \ciptwo{\phi}{\Dh\psi}
  - h\, \cip{A_x\phi_{\hf,\star}}{D_x\psi_{  \hf,\star}} + h\, \cip{A_x\phi_{m+\hf,\star}}{D_x \psi_{m+\hf,\star}} 
	\nonumber
	\\
& & \qquad \qquad \qquad 
  - h\, \cip{A_y\phi_{\star,  \hf}}{D_y\psi_{\star,  \hf}} + h\, \cip{A_y \phi_{\star,n+\hf}}{D_y\psi_{\star,n+\hf}} \ , 
	\label{green1stthm-2d}
	\end{eqnarray}
and 
	\begin{eqnarray}
h^2\, \ciptwo{\phi}{\Dh\psi} &=& h^2\, \ciptwo{\Dh\phi}{\psi} \nonumber
	\\
& & + h\, \cip{A_x \phi_{m+\hf,\star}}{D_x \psi_{m+\hf,\star}} 
    - h\, \cip{D_x \phi_{m+\hf,\star}}{A_x \psi_{m+\hf,\star}} \nonumber
	\\
& & - h\, \cip{A_x \phi_{  \hf,\star}}{D_x \psi_{  \hf,\star}} 
    + h\, \cip{D_x \phi_{  \hf,\star}}{A_x \psi_{  \hf,\star}} \nonumber
	\\
& & + h\, \cip{A_y \phi_{\star,n+\hf}}{D_y \psi_{\star,n+\hf}} 
    - h\, \cip{D_y \phi_{\star,n+\hf}}{A_y \psi_{\star,n+\hf}} \nonumber
	\\
& & - h\, \cip{A_y \phi_{\star,  \hf}}{D_y \psi_{\star,  \hf}} 
    + h\, \cip{D_y \phi_{\star,  \hf}}{A_y \psi_{\star,  \hf}} \ .
    	\label{green2ndthm-2d}
	\end{eqnarray} 
	\end{proposition}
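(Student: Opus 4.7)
\medskip

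The plan is to derive both identities by repeated application of the Summation-By-Parts formula (Prop.~\ref{sbp-2D-edge}) combined with the defining decomposition $\Dh = d_x D_x + d_y D_y$ of the discrete Laplacian on cell-centered grid functions. The boundary terms that remain uncancelled will exactly match the edge-averaged inner products appearing on the right-hand sides.

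For the first identity (\ref{green1stthm-2d}), I would apply Prop.~\ref{sbp-2D-edge} in the $x$-direction with the choice $f = D_x\psi \in {\mathcal E}^{\rm ew}_{m\times n}$, which yields
\begin{equation*}
h^2 \eipew{D_x\phi}{D_x\psi} = -h^2\, \ciptwo{\phi}{d_x D_x\psi} - h\, \cip{A_x\phi_{\hf,\star}}{D_x\psi_{\hf,\star}} + h\, \cip{A_x\phi_{m+\hf,\star}}{D_x\psi_{m+\hf,\star}}.
\end{equation*}
Applying the analogous formula in the $y$-direction with $f = D_y\psi \in {\mathcal E}^{\rm ns}_{m\times n}$ and adding the two relations produces $-h^2 \ciptwo{\phi}{(d_xD_x + d_yD_y)\psi} = -h^2\ciptwo{\phi}{\Dh\psi}$ in the interior term, plus the four boundary contributions listed in (\ref{green1stthm-2d}).

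For the second identity (\ref{green2ndthm-2d}), I would exploit the symmetry of the edge-centered inner products $\eipew{\cdot}{\cdot}$ and $\eipns{\cdot}{\cdot}$. Apply SBP in two different ways: once as above with $f = D_x\psi$, and once again with the roles of $\phi,\psi$ interchanged, i.e.\ with $f = D_x\phi$, to get
\begin{equation*}
h^2 \eipew{D_x\psi}{D_x\phi} = -h^2 \ciptwo{\psi}{d_x D_x\phi} - h\cip{A_x\psi_{\hf,\star}}{D_x\phi_{\hf,\star}} + h\cip{A_x\psi_{m+\hf,\star}}{D_x\phi_{m+\hf,\star}}.
\end{equation*}
Since the left-hand sides agree, subtracting gives $h^2\ciptwo{\phi}{d_xD_x\psi} - h^2\ciptwo{d_xD_x\phi}{\psi}$ expressed as a difference of $x$-boundary pairings. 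Performing the identical manipulation in the $y$-direction and summing the two results, together with $\Dh = d_x D_x + d_y D_y$, yields (\ref{green2ndthm-2d}).

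There is no real obstacle here; the proposition is essentially a bookkeeping exercise once SBP is in hand. The only mild subtlety is keeping track of signs and edge locations: the contribution at $i = m+\hf$ enters with a plus sign and the contribution at $i = \hf$ with a minus sign in the first identity, while in the second identity each boundary $x$-side (and each $y$-side) produces a \emph{difference} of two cross-pairings $A_x\phi \cdot D_x\psi - D_x\phi \cdot A_x\psi$, rather than a single term. This asymmetric appearance in (\ref{green2ndthm-2d}) is precisely what one expects from the discrete analogue of $\int_\Omega \phi \Delta\psi - \psi \Delta\phi \, d{\bf x} = \int_{\partial\Omega}(\phi \partial_n \psi - \psi \partial_n\phi)\,dS$, and it is the cancellation/non-cancellation pattern between the two SBP applications that needs to be verified componentwise.
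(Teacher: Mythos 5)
Your derivation is correct: both identities follow exactly as you describe from Prop.~\ref{sbp-2D-edge} together with $\Dh = d_xD_x + d_yD_y$, and the symmetry trick for the second identity produces precisely the cross-pairing boundary terms $A_x\phi\cdot D_x\psi - D_x\phi\cdot A_x\psi$ in (\ref{green2ndthm-2d}). The paper itself omits the proof and defers to \cite{wise09}, but your argument is the standard intended one, so nothing further is needed.
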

 
In this paper we are interested in periodic grid functions.  Specifically, we shall say the cell-centered function $\phi\in {\mathcal C}_{\overline{m}\times\overline{n}}$ is periodic if and only if
	\begin{eqnarray}
\phi_{m+1,  j} = \phi_{1,j}, \quad \phi_{  0,  j} &=& \phi_{m,j}, \quad j = 1,\ldots,n, \label{periodic-bc-1}
	\\
\phi_{  i,n+1} = \phi_{i,1}, \quad \phi_{  i,  0} &=& \phi_{i,n}, \quad i = 0,\ldots,m+1. \label{periodic-bc-2}
	\end{eqnarray}
For such functions, the center-to-edge averages and differences are periodic.  For example, if $\phi\in {\mathcal C}_{\overline{m}\times\overline{n}}$ is periodic, then $A_x\phi_{m+\hf,j} = A_x\phi_{\hf,j}$ and also $D_x\phi_{m+\hf,j} = D_x\phi_{\hf,j}$, for all $j = 0,1, \ldots, n+1$.  We note that the results for periodic functions that are to follow will also hold, in a possibly slightly modified form, when the boundary conditions are taken to be homogeneous Neumann.

We define the following norms for cell-centered functions.  If $\phi\in{\mathcal C}_{m\times n}$, then $\nrm{\phi}_2 := \sqrt{h^2\ciptwo{\phi}{\phi}}$, and we define $\nrm{\nabh\phi}_2$, where $\phi\in {\mathcal C}_{\overline{m}\times\overline{n}}$, to mean
	\begin{equation}
\nrm{\nabh\phi}_2 := \sqrt{h^2\, \eipew{D_x\phi}{D_x\phi} +h^2\, \eipns{D_y\phi}{D_y\phi} \ } \ .
	\end{equation}
We will use the following discrete Sobolev-type norms for grid functions $\phi\in {\mathcal C}_{\overline{m}\times\overline{n}}$: $\nrm{\phi}_{0,2} := \nrm{\phi}_2$ and
	\begin{equation}
\nrm{\phi}_{1,2} := \sqrt{\nrm{\phi}_2^2+\nrm{\nabh\phi}_2^2 \ } \ ,\quad \nrm{\phi}_{2,2} := \sqrt{\nrm{\phi}_2^2+\nrm{\nabh\phi}_2^2+\nrm{\Dh\phi}_2^2 \  } \ . 
	\end{equation}
In addition, we introduce the following discrete $L^4$ and $L^\infty$ norms: for any $\phi\in{\mathcal C}_{m\times n}$ define
        \begin{equation}
\nrm{\phi}_4 := \Big( h^2\, \ciptwo{\phi^4}{{\bf 1}}\Big)^{1/4} \quad \mbox{and}\quad \nrm{\phi}_\infty = \max_{1\le i\le m \atop 1\le j\le n}\left|\phi_{i,j}\right|  . \label{(5.25)}
        \end{equation}
And for $\phi\in {\mathcal C}_{\overline{m}\times\overline{n}}$ define
	\begin{equation}
\nrm{\nabh\phi}_4 := \Big(h^2\, \eipew{\left(D_x\phi\right)^4}{{\bf 1}} +h^2\, \eipns{\left(D_y\phi\right)^4}{{\bf 1}}\Big)^{1/4}\ .
	\end{equation}

Some discrete Sobolev-type inequalities for two-dimensional grid functions are needed in the analysis in later sections. The following results are recalled; the detailed proofs can be found in~\cite{wise09, wang11a}.

	\begin{lemma}\label{bramble}
Suppose that $\phi\in {\mathcal C}_{\overline{m}\times\overline{n}}$.  Then,
	\begin{equation}
\nrm{\phi}_4 \le C_1 \nrm{\phi}_{1,2}\ ,\quad C_1:= \left(2\max\left[\max\left\{\frac{1}{L_x},L_x\right\},\max\left\{\frac{1}{L_y},L_y\right\} \right]\right)^{1/4}\ .
	\end{equation}
	\end{lemma}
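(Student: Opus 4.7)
The plan is to establish a discrete analog of the two-dimensional Ladyzhenskaya/Sobolev embedding $H^1\hookrightarrow L^4$, adapted to periodic cell-centered grid functions. I would proceed in three steps: a one-dimensional pointwise Agmon-type bound, a tensor-product multiplication in the two coordinate directions, and a final combination via weighted Young inequalities.

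First, I would fix a row index $j$ and, writing $\phi_i := \phi_{i,j}$, use periodicity and telescoping to get $\phi_i^2 - \phi_{i_0}^2 = h \sum_{k=i_0}^{i-1}(\phi_{k+1}+\phi_k) D_x\phi_{k+1/2}$. Taking absolute values, extending the sum to all $k$, and averaging $|\phi_{i_0}|^2$ over $i_0\in\{1,\dots,m\}$ produces
\begin{equation*}
\max_i \phi_i^2 \;\le\; \tfrac{1}{L_x}\, h\!\sum_k \phi_k^2 \;+\; 2\Bigl(h\!\sum_k \phi_k^2\Bigr)^{\!1/2}\Bigl(h\!\sum_k |D_x\phi_{k+1/2}|^2\Bigr)^{\!1/2},
\end{equation*}
where discrete Cauchy-Schwarz and the periodic identity $\sum_k\phi_{k+1}^2 = \sum_k \phi_k^2$ absorb the factor $(\phi_{k+1}+\phi_k)$. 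This is the one-dimensional discrete Agmon inequality I will need.

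Next I would apply the 1D bound along each row (obtaining $\phi_{i,j}^2\le f_j$) and each column (obtaining $\phi_{i,j}^2\le g_i$), so that $\phi_{i,j}^4\le f_j g_i$ and hence
\begin{equation*}
\nrm{\phi}_4^4 \;=\; h^2\!\!\sum_{i,j}\phi_{i,j}^4 \;\le\; \Bigl(h\!\sum_j f_j\Bigr)\Bigl(h\!\sum_i g_i\Bigr).
\end{equation*}
A second application of Cauchy-Schwarz in the transverse variable then bounds $h\!\sum_j f_j$ by $\tfrac{1}{L_x}\nrm{\phi}_2^2 + 2\nrm{\phi}_2\nrm{\nabh\phi}_2$, and likewise $h\!\sum_i g_i$ by the analogous quantity with $L_y$.

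Finally, the weighted Young inequality $2ab \le \alpha a^2 + \alpha^{-1}b^2$ with $\alpha = 1/L_x$ (resp.\ $\alpha = 1/L_y$) converts each factor into $2\max\{1/L_x,L_x\}\,\nrm{\phi}_{1,2}^2$ (resp.\ the $L_y$ analog). Multiplying the two bounds, replacing the product by the square of the larger maximum, and taking the fourth root gives $\nrm{\phi}_4 \le C_1\nrm{\phi}_{1,2}$ with $C_1$ of the stated form. The main technical obstacle is bookkeeping the constants: one must pick the Young parameter so that the $\nrm{\phi}_2^2$ and $\nrm{\nabh\phi}_2^2$ contributions combine cleanly into $\nrm{\phi}_{1,2}^2$ with the sharp coefficient; everything else is routine discrete calculus mirroring the continuous 2D Ladyzhenskaya argument.
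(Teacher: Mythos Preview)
The paper does not supply its own proof of this lemma; it is merely recalled, with the remark that ``the detailed proofs can be found in~\cite{wise09, wang11a}.'' So there is no in-paper argument against which to compare your proposal.

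Your strategy---a one-dimensional discrete Agmon bound in each coordinate direction, followed by the tensor-product estimate $\phi_{i,j}^4\le f_jg_i$ and Cauchy--Schwarz in the transverse variable---is the standard discrete Ladyzhenskaya argument and is structurally sound. One remark on the constant you flag as the ``main technical obstacle'': as outlined, applying Young's inequality separately to each factor yields
\[
\nrm{\phi}_4^4 \;\le\; 4\max\!\left\{\tfrac{1}{L_x},L_x\right\}\max\!\left\{\tfrac{1}{L_y},L_y\right\}\nrm{\phi}_{1,2}^4 \;\le\; 4M^2\nrm{\phi}_{1,2}^4,
\]
with $M=\max\bigl[\max\{1/L_x,L_x\},\max\{1/L_y,L_y\}\bigr]$, so the resulting constant is $(4M^2)^{1/4}=\sqrt{2M}$ rather than the stated $(2M)^{1/4}$. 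In fact the stated $C_1$ cannot be sharp as written: testing with the constant function $\phi\equiv 1$ gives $\nrm{\phi}_4/\nrm{\phi}_{1,2}=(L_xL_y)^{-1/4}$, which exceeds $(2M)^{1/4}$ whenever $L_x=L_y<1/2$. So your inability to recover the exact printed constant is not a defect of your argument; the inequality with the larger constant your method produces is what the later analysis actually needs, and the specific form of $C_1$ should be checked against the cited references.
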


	\begin{lemma}
	\label{sobolev-2d-final}
Suppose that $\phi\in {\mathcal C}_{\overline{m}\times\overline{n}}$ is periodic.  Then, for any $i\in \left\{1,2,\ldots ,m\right\}$ and any $j\in \left\{1,2,\ldots ,n\right\}$,
	\begin{equation}
\left|\phi_{i,j} \right|^2 \le C_2\nrm{\phi}^2_{2,2} \ ,\quad C_2:=4\max\left\{\frac{1}{L_xL_y},\frac{L_x}{L_y},\frac{L_y}{L_x},\frac{L_xL_y}{2}\right\} \ .
	\end{equation}
Hence $\nrm{\phi}^2_\infty \le C_2\nrm{\phi}^2_{2,2}$.
	\end{lemma}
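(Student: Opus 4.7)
The statement is a discrete analogue of the two-dimensional Sobolev embedding $H^2(\Omega)\hookrightarrow L^\infty(\Omega)$ for periodic functions, with an explicit constant. My plan is an elementary nested summation proof that mirrors the continuous iterated-Agmon argument, rather than going through the discrete Fourier transform.

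First, I would reduce to the zero-mean case by writing $\phi_{i,j} = \overline{\phi} + \widetilde\phi_{i,j}$ where $\overline{\phi} := \frac{1}{mn}\sum_{i',j'}\phi_{i',j'}$ is the discrete mean. A direct application of the discrete Cauchy--Schwarz inequality gives $|\overline{\phi}|^2\le \frac{1}{L_xL_y}\|\phi\|_2^2$, which accounts for the $\frac{1}{L_xL_y}$ term appearing in $C_2$. Note that the mean value is trivially bounded by $\|\phi\|_{2,2}$, so absorbing this contribution is consistent with the stated inequality.

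Next, I would control the fluctuation $\widetilde\phi$ by an iterated one-dimensional argument. Pick a reference index $j_0$ for which $h\sum_{i'}\widetilde\phi_{i',j_0}^2$ is minimal; by periodicity and the pigeonhole principle this partial sum is bounded by $\frac{1}{L_y}\|\widetilde\phi\|_2^2$. For fixed $i$, a discrete summation-by-parts in $y$ yields
\begin{equation*}
\widetilde\phi_{i,j}^2 - \widetilde\phi_{i,j_0}^2 = 2h\sum_{k}\bigl(A_y\widetilde\phi\bigr)_{i,k+\hf}\,\bigl(D_y\widetilde\phi\bigr)_{i,k+\hf},
\end{equation*}
so applying Cauchy--Schwarz in $y$ and then choosing $i$ via another pigeonhole-type bound in $x$ gives pointwise control by $\|\widetilde\phi\|_2\,\|\partial_y\widetilde\phi\|_2$, producing the $L_y/L_x$ contribution. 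Repeating the argument with the roles of $x$ and $y$ exchanged produces the $L_x/L_y$ contribution. The remaining $\frac{L_xL_y}{2}$ factor enters when the nested one-dimensional step requires pointwise control of the first derivatives along the reference line, which I would bound by a similar Agmon-type estimate applied to $D_x\widetilde\phi$ and $D_y\widetilde\phi$, invoking the discrete mixed-derivative norms and ultimately $\|\Delta_h\phi\|_2$ via the discrete Green's identities already recorded in Proposition on Discrete Green's Identities.

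The main obstacle is bookkeeping the explicit constants through the repeated Cauchy--Schwarz and pigeonhole steps so that the four terms inside the $\max$ combine cleanly as stated; in particular, one must be careful to distribute the $H^1$ and mixed-derivative control so that $\|\phi\|_{2,2}^2 = \|\phi\|_2^2 + \|\nabla_h\phi\|_2^2 + \|\Delta_h\phi\|_2^2$ (rather than a full $H^2$ norm involving all second differences) suffices on the right-hand side. Once this balance is struck, the bound $|\phi_{i,j}|^2 \le C_2 \|\phi\|_{2,2}^2$ holds uniformly in $(i,j)$ and, taking the maximum, immediately yields $\|\phi\|_\infty^2 \le C_2\|\phi\|_{2,2}^2$.
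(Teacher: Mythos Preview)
The paper does not actually prove this lemma; it simply states ``the detailed proofs can be found in~[wise09, wang11a]'' and moves on. So there is no in-paper argument to compare against, and your task is really to reproduce (or replace) the cited proof.

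Your plan is the right one in outline: the iterated one-dimensional telescoping/averaging (Agmon) argument is exactly how the cited references establish the bound, and you correctly anticipate that the crucial step is controlling the mixed difference $D_xD_y\phi$ by $\|\Delta_h\phi\|_2$, which is precisely the content of Lemma~\ref{laplacian-estimate}. That said, a couple of the intermediate moves in your sketch are muddled. After telescoping in $y$ and applying Cauchy--Schwarz you should be averaging over the remaining index, not ``choosing $i$ via another pigeonhole-type bound in $x$''; pigeonholing twice at the same stage loses the structure needed to reach the $\|\phi\|_{2,2}$ norm. Likewise, splitting off the mean first and working with $\widetilde\phi$ is permissible but is not how the constant $C_2=4\max\{\ldots\}$ is obtained in the references: there one telescopes $\phi^2$ directly, averages, and the four terms in the max arise from bounding $\|\phi\|_2^2$, $\|\phi\|_2\|D_x\phi\|_2$, $\|\phi\|_2\|D_y\phi\|_2$, and $\|D_x\phi\|_2\|D_y\phi\|_2 + \|\phi\|_2\|\Delta_h\phi\|_2$ respectively, each carrying its own $L_x,L_y$ prefactor. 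Your honest admission that the bookkeeping is the ``main obstacle'' is accurate---as written, the proposal is a plan rather than a proof, and the explicit constant will not fall out without actually executing the two nested telescoping-and-averaging steps and invoking Lemma~\ref{laplacian-estimate} at the end.
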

	
	\begin{lemma}
	\label{laplacian-estimate}
Suppose that $\phi\in {\mathcal C}_{\overline{m}\times\overline{n}}$ is periodic.  Define	
	\begin{equation}
S:= h^2\sum_{i'=0}^{m}\sum_{j'=0}^n w^m_{i'}w^n_{j'} \left|D_y\left(D_x\phi\right)_{i'+\hf,j'+\hf}\right|^2 ,
	\end{equation}
where
	\begin{equation}\label{w-function}
w^\ell_k := \left\{
	\begin{array}{lll}
1 & \mbox{if} & k\in\left\{1,2,\ldots ,\ell-1\right\}
	\\
\hf & \mbox{if} & k\in\left\{0,\ell\right\}
	\end{array}
\right.  .
	\end{equation}
Then $S = h^2\ciptwo{\Dh^x\phi}{\Dh^y\phi}$, where $\Dh^x := d_x D_x$ and $\Dh^y := d_y D_y$ are the 3-point discrete lapacian operators in the $x$- and $y$-directions, respectively~\cite{wise09}.  And, since $S \ge 0$, we have
	\begin{equation}\label{laplace-estimate-x-y}
h^2\,\ciptwo{\Dh^x\phi}{\Dh^x\phi} \le h^2\,\ciptwo{\Dh\phi}{\Dh\phi}\quad \mbox{and}\quad  h^2\,\ciptwo{\Dh^y\phi}{\Dh^y\phi} \le h^2\,\ciptwo{\Dh\phi}{\Dh\phi}.
	\end{equation}
	\end{lemma}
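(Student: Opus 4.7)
The plan is to first establish the identity $S = h^2\, \ciptwo{\Dh^x \phi}{\Dh^y \phi}$ via two successive summations by parts, and then deduce the inequalities by expanding $\nrm{\Dh\phi}_2^2$ as a square and exploiting the nonnegativity of $S$.

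For the identity, I would write $\Dh^x \phi = d_x u$ with $u := D_x \phi$ east-west edge-centered. Since $\phi$ is periodic, $u$ and all of its further differences inherit the appropriate periodicity, so the boundary contributions in Prop.~\ref{sbp-2D-edge} vanish. Applying summation by parts in $x$ to the pairing $h^2\,\ciptwo{d_x u}{\Dh^y \phi}$ yields $-h^2\,\eipew{u}{D_x(\Dh^y \phi)}$, and since $D_x$ commutes with the three-point $y$-Laplacian (the two operators act on distinct coordinates), this equals $-h^2\,\eipew{u}{\Dh^y u}$, where $\Dh^y$ is now understood as the natural three-point $y$-direction operator applied to an east-west edge-centered function. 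Next, for each fixed east-west edge index $i+\hf$, I would perform a one-dimensional summation by parts in $y$ on the periodic grid of length $n$: a telescoping argument using $u_{i+\hf,j+1} - u_{i+\hf,j} = h\, D_y u_{i+\hf,j+\hf}$ and periodic identification of the endpoints gives
\begin{equation*}
\sum_{j=1}^n u_{i+\hf,j}\, \Dh^y u_{i+\hf,j} = -\sum_{j=1}^n \left|D_y u_{i+\hf,j+\hf}\right|^2.
\end{equation*}
Summing over $i$ then produces $h^2\, \ciptwo{\Dh^x\phi}{\Dh^y\phi} = h^2 \sum_{i=1}^m \sum_{j=1}^n \left|D_y D_x \phi_{i+\hf,j+\hf}\right|^2$. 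This matches $S$ once the half-weights $w^m_{i'}$ and $w^n_{j'}$ are accounted for: for periodic $\phi$ the mixed-derivative vertex values at $i' = 0$ and $i' = m$ (resp.\ $j' = 0$ and $j' = n$) coincide, so the two $\hf$-weighted endpoint copies in the definition of $S$ combine into a single full-weight copy, and the weighted sum reduces to the plain unweighted double sum over $1 \le i \le m$, $1 \le j \le n$.

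For the inequalities, I would use $\Dh = \Dh^x + \Dh^y$ to expand
\begin{equation*}
h^2\, \ciptwo{\Dh\phi}{\Dh\phi} = h^2\, \ciptwo{\Dh^x\phi}{\Dh^x\phi} + 2 S + h^2\, \ciptwo{\Dh^y\phi}{\Dh^y\phi}.
\end{equation*}
Because $S$ is manifestly nonnegative as a weighted sum of squares of real numbers, dropping $2S$ together with either $h^2\,\ciptwo{\Dh^y\phi}{\Dh^y\phi}$ or $h^2\,\ciptwo{\Dh^x\phi}{\Dh^x\phi}$ immediately yields the two inequalities in (\ref{laplace-estimate-x-y}).

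The main obstacle is bookkeeping rather than analysis: verifying that $D_x$ and the three-point $y$-Laplacian commute on periodic grid functions, confirming that the boundary contributions in both applications of summation by parts actually vanish under periodicity, and checking that the $\hf$-weights $w^\ell_k$ in the definition of $S$ precisely reconcile with the periodic identification of the vertex-centered mixed-derivative values. Once this tracking is carried out carefully, the identity and both inequalities follow at once.
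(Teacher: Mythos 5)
Your proof is correct, and since the paper omits the proof of this lemma (deferring to the references it cites), your argument---two summations by parts in $x$ and then $y$ to identify the cross term $h^2\,\ciptwo{\Dh^x\phi}{\Dh^y\phi}$ with the manifestly nonnegative weighted sum $S$, followed by expanding $h^2\,\ciptwo{\Dh\phi}{\Dh\phi} = h^2\,\ciptwo{\Dh^x\phi}{\Dh^x\phi} + 2S + h^2\,\ciptwo{\Dh^y\phi}{\Dh^y\phi}$ and dropping nonnegative terms---is precisely the standard argument those references use. The sign bookkeeping (one minus sign from each summation by parts, cancelling to give a sum of squares) and the observation that periodicity merges the two $\hf$-weighted endpoint copies into one full-weight term are both handled correctly.
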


Consider the space
	\begin{equation}
	\label{mean-zero-space}
H := \left\{\phi\in{\mathcal C}_{m\times n} | \ciptwo{\phi}{{\bf 1}}=0 \right\},
	\end{equation}
and equip this space with the bilinear form
	\begin{equation}
\ciptwo{\phi_1}{\phi_2}_{-1} := \eipew{D_x \psi_1}{D_x \psi_2}  + \eipns{D_y \psi_1}{D_y \psi_2} ,
	\end{equation}
for any $\phi_1,\, \phi_2\in H$, where $\psi_i\in{\mathcal C}_{\overline{m}\times\overline{n}}$ is the unique solution to 
	\begin{equation}
-\Dh \psi_i  = \phi_i,\quad \psi_i\mbox{ periodic},\quad \ciptwo{\psi_i}{{\bf 1}} = 0.
	\end{equation}
	
	\begin{lemma}
	\label{h-l-inner-product}
The bilinear form $\ciptwo{\phi_1}{\phi_2}_{-1}$ is an inner product on the space $H$.  Moreover,
	\begin{equation}
\ciptwo{\phi_1}{\phi_2}_{-1} = -\ciptwo{\phi_1}{\Dh^{-1}\left(\phi_2\right)} = -\ciptwo{\Dh^{-1}\left(\phi_1\right)}{\phi_2}.
	\end{equation}
Thus
	\begin{equation}
\nrm{\phi}_{-1} :=\sqrt{h^2\ciptwo{\phi}{\phi}_{-1}}
	\end{equation}
defines a norm on $H$.
	\end{lemma}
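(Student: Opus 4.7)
The plan is to dispatch three items in order: well-posedness of the representer $\psi_i$ used in the definition of $\ciptwo{\cdot}{\cdot}_{-1}$, the identity expressing $\ciptwo{\cdot}{\cdot}_{-1}$ via $\Dh^{-1}$, and the standard inner-product and norm axioms.

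First, I would check that the $\psi_i$ appearing in the definition is uniquely determined. On periodic cell-centered grid functions $-\Dh$ is symmetric, and its kernel is exactly the constants, since a periodic function whose discrete gradient vanishes on every edge must be constant. By rank--nullity its range is therefore the mean-zero subspace, so restricted to mean-zero periodic functions $-\Dh$ is a linear bijection. Hence for each $\phi_i \in H$ there is a unique mean-zero periodic $\psi_i$ with $-\Dh \psi_i = \phi_i$, which I write as $\psi_i = -\Dh^{-1}\phi_i$.

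Next, I apply the first discrete Green's identity \eqref{green1stthm-2d} with $\phi$ set to $\psi_1$ and $\psi$ set to $\psi_2$.  Because $\psi_1$ and $\psi_2$ are periodic, the four boundary sums cancel pairwise (the $m+\hf$ contributions agree in value with the $\hf$ contributions and enter with opposite signs, and analogously in $y$), leaving
\begin{equation*}
h^2\,\eipew{D_x\psi_1}{D_x\psi_2} + h^2\,\eipns{D_y\psi_1}{D_y\psi_2} = -h^2\,\ciptwo{\psi_1}{\Dh\psi_2} = h^2\,\ciptwo{\psi_1}{\phi_2} = -h^2\,\ciptwo{\Dh^{-1}\phi_1}{\phi_2}.
\end{equation*}
Dividing through by $h^2$ yields the first stated equality.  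The symmetric form follows by swapping $\phi_1$ and $\phi_2$, or by invoking \eqref{green2ndthm-2d}, whose boundary terms likewise vanish under periodicity; as a byproduct, $\Dh^{-1}$ is shown self-adjoint on $H$.

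Bilinearity of $\ciptwo{\cdot}{\cdot}_{-1}$ is inherited from bilinearity of the edge inner products and linearity of $\phi \mapsto -\Dh^{-1}\phi$, and symmetry is precisely the pair of identities just derived.  For positive-definiteness, $\ciptwo{\phi}{\phi}_{-1} = \eipew{D_x\psi}{D_x\psi} + \eipns{D_y\psi}{D_y\psi} \ge 0$, with equality forcing $D_x\psi \equiv 0$ and $D_y\psi \equiv 0$, hence $\psi$ constant on the periodic grid; the mean-zero constraint $\ciptwo{\psi}{{\bf 1}} = 0$ then gives $\psi \equiv 0$ and $\phi = -\Dh\psi = 0$.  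The norm axioms for $\nrm{\cdot}_{-1}$ follow automatically: homogeneity from bilinearity, definiteness from positive-definiteness just established, and the triangle inequality from Cauchy--Schwarz in the inner product $\ciptwo{\cdot}{\cdot}_{-1}$.  The only real obstacle is the careful bookkeeping of the boundary terms in \eqref{green1stthm-2d} and \eqref{green2ndthm-2d}---verifying that the $m+\hf$ sums coincide in value with the $\hf$ sums and enter with opposite signs so that each pair telescopes to zero under periodicity; otherwise the proof is essentially a straightforward discrete transcription of the continuous $H^{-1}$ inner-product construction.
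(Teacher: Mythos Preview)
Your proof is correct. The paper itself does not supply a proof of this lemma; it is simply stated in the appendix among results recalled from earlier references, so there is no ``paper's own proof'' to compare against. Your argument is the standard one: invertibility of $-\Dh$ on mean-zero periodic grid functions, the periodic Green's identity \eqref{green1stthm-2d} with vanishing boundary contributions to obtain the representation via $\Dh^{-1}$, and then the routine verification of the inner-product axioms. One minor remark: what you call ``the first stated equality'' is actually the second one in the lemma ($-\ciptwo{\Dh^{-1}\phi_1}{\phi_2}$); the other follows by symmetry exactly as you note.
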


	\begin{lemma}
	\label{embedding}
Suppose that $\phi\in {\mathcal C}_{\overline{m}\times\overline{n}}$ is periodic and set $\bar{\phi} = \frac{1}{m \cdot n} \ciptwo{\phi}{{\bf 1}}$.  Then
	\begin{equation}
\nrm{\phi - \bar{\phi}}_2 \le C_3\nrm{\nabh\phi}_2, 
	\end{equation}
where $C_3>0$ is a constant that only depends upon $L_x$ and $L_y$.  Furthermore,
	\begin{equation}
\nrm{\phi-\bar{\phi}}_4 \le C_4\nrm{\nabh\phi}_2 ,\quad \nrm{\nabh\phi}_4 \le C_4\nrm{\Dh\phi}_2 .
	\end{equation}
where $C_4 := C_1\sqrt{C_3^2+1}$.
	\end{lemma}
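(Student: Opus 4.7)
The plan is to establish the three inequalities in sequence, leveraging Lemma \ref{bramble} and a discrete Poincaré estimate that I will prove first via spectral decomposition of the discrete periodic Laplacian.

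For the Poincaré inequality $\nrm{\phi-\bar\phi}_2 \le C_3\nrm{\nabh\phi}_2$, I would expand $\phi - \bar\phi$ in the discrete Fourier basis on the periodic $m\times n$ grid. The basis functions $e^{2\pi {\rm i}(kx/L_x + ly/L_y)}$ diagonalize both the discrete Laplacian $\Dh$ and the discrete gradient, with eigenvalues $\lambda_{k,l}^h = -\frac{4}{h^2}\left(\sin^2(\pi k h/L_x) + \sin^2(\pi l h/L_y)\right)$ and gradient symbols whose magnitudes are $\frac{2}{h}|\sin(\pi k h / L_x)|$ and $\frac{2}{h}|\sin(\pi l h / L_y)|$. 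Subtraction of $\bar\phi$ removes the $(k,l)=(0,0)$ mode. Using $|\sin(\pi t)| \ge 2|t|$ for $|t| \le 1/2$ (valid since $|k|\le m/2$, $|l|\le n/2$), I obtain $|\lambda_{k,l}^h| \ge c(L_x,L_y)$ and the gradient symbols dominate $2\pi |k|/L_x$, $2\pi |l|/L_y$ respectively. Parseval's identity then gives the Poincaré bound with $C_3$ depending only on $L_x, L_y$ and independent of $h$.

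For the first Sobolev embedding, I apply Lemma \ref{bramble} to the periodic cell-centered function $\phi-\bar\phi$:
\begin{equation}
\nrm{\phi-\bar\phi}_4 \le C_1\nrm{\phi-\bar\phi}_{1,2} = C_1\sqrt{\nrm{\phi-\bar\phi}_2^2 + \nrm{\nabh(\phi-\bar\phi)}_2^2}.
\end{equation}
Since $\bar\phi$ is constant, $\nabh(\phi-\bar\phi) = \nabh\phi$. Substituting the Poincaré estimate established above yields
\begin{equation}
\nrm{\phi-\bar\phi}_4 \le C_1\sqrt{C_3^2+1}\,\nrm{\nabh\phi}_2 = C_4\nrm{\nabh\phi}_2,
\end{equation}
exactly matching the claimed constant.

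For the third inequality $\nrm{\nabh\phi}_4 \le C_4\nrm{\Dh\phi}_2$, I observe that $D_x\phi$ and $D_y\phi$ are themselves periodic edge-centered grid functions (periodicity of $\phi$ passes to the differences) and have zero discrete mean by telescoping. The Fourier-based Poincaré argument carries over verbatim to these edge-centered functions, and the analogue of Lemma \ref{bramble} holds in the same form for edge-centered grid functions on the periodic grid (summation-by-parts and mean-value estimates used in the proof of Lemma \ref{bramble} adapt without modification). Applying the just-proved second inequality to each of $D_x\phi$ and $D_y\phi$ separately gives $\nrm{D_x\phi}_4 \le C_4\nrm{\nabh(D_x\phi)}_2$ and analogously for $D_y\phi$. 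The term $\nrm{\nabh(D_x\phi)}_2^2$ decomposes into $\nrm{\Dh^x\phi}_2^2$ plus a mixed-derivative piece $\nrm{D_yD_x\phi}_2^2$; Lemma \ref{laplacian-estimate} bounds both by $\nrm{\Dh\phi}_2^2$ (the pure-direction terms directly, the mixed-derivative piece via the identity $S = h^2\ciptwo{\Dh^x\phi}{\Dh^y\phi}$ and Cauchy–Schwarz). Summing the fourth powers of the two components, $\nrm{\nabh\phi}_4^4 = \nrm{D_x\phi}_4^4 + \nrm{D_y\phi}_4^4$, and tracking constants carefully yields the bound with the same $C_4$.

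The main obstacle is the careful bookkeeping in the third inequality: verifying that the constant $C_4$ is preserved when passing through the edge-centered Sobolev embedding and controlling the mixed-derivative terms $D_yD_x\phi$ solely by $\nrm{\Dh\phi}_2$ via Lemma \ref{laplacian-estimate}, rather than by a larger constant coming from a direct $\ell^2$ estimate. Everything else reduces to standard Fourier-side estimates and the already-proved first-step inequalities.
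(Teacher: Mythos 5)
The paper never proves this lemma---it is simply recalled, with the proofs deferred to the references it cites---but your argument is correct and is essentially the standard one from those references: a discrete Poincar\'{e} inequality for the mean-zero part via the Fourier symbol of $\nabh$, then Lem.~\ref{bramble} applied to $\phi-\bar\phi$ (which is precisely where the constant $C_4=C_1\sqrt{C_3^2+1}$ comes from), then a bootstrap of the same two steps to the periodic, mean-zero edge functions $D_x\phi$ and $D_y\phi$. Two small points to tighten: you implicitly invoke an edge-centered analogue of Lem.~\ref{bramble}, which holds but is nowhere stated in this paper; and to land on exactly $C_4$ in the third inequality you should use the exact identity $\nrm{\Dh\phi}_2^2=\nrm{\Dh^x\phi}_2^2+\nrm{\Dh^y\phi}_2^2+2S$ furnished by Lem.~\ref{laplacian-estimate}, so that the pure and mixed second-difference pieces sum to $\nrm{\Dh\phi}_2^2$ on the nose, rather than bounding each piece separately by $\nrm{\Dh\phi}_2^2$ via Cauchy--Schwarz, which would inflate the constant.
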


	\section{Consistency Analysis of the Second Order Numerical Scheme} 
	\label{app-consistency}

In this appendix we give a detailed derivation the local truncation error estimate (\ref{truncation-error}).  We establish the results for vertex-centered grid functions, rather than cell-centered functions, as the indexing becomes simpler. 

	\subsection{Proof of Estimate (\ref{truncation-error})} 
	\label{sub-app-truncation-error}

The following three results will be used to establish (\ref{truncation-error}).
	\begin{proposition}
	\label{Prop-A.0}
For $f \in H^3 (0,T)$, we have 
	\begin{equation}
\nrm{\tau^t f }_{L_s^2 (0,T)}  \le C s^m \nrm{f}_{H^{m+1} (0,T)}  ,  \quad 
   \mbox{with} \quad \tau^t f ^k = \frac{f^{k+1} - f^k}{s} - f' (t^{k+1/2}) ,   
	\label{est-1d-time-1}
	\end{equation}
for $0 \le m \le 2$, where $C$ only depends on $T$, $\nrm{\ \cdot \ }_{L_s^2 (0,T)}$ is a discrete $L^2$ norm (in time) given by $\nrm{g}_{L_s^2 (0,T)} = \sqrt{s \sum_{k=0}^{[T/s]-1} \left(g^k\right)^2}$.  
   \end{proposition}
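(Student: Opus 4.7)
The plan is to view the functional $L_s: f \mapsto \frac{f(s)-f(0)}{s} - f'(s/2)$ as a midpoint-rule quadrature error and exploit the fact that $L_s$ annihilates polynomials of degree up to two; this symmetry is the source of the sharp $O(s^2)$ rate in the $m=2$ case. I will treat the three values $m=0,1,2$ by the same strategy, using Taylor expansions of increasing depth around the midpoint $t^{k+1/2}$.

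For the critical $m=2$ case, the first step is to rewrite the pointwise truncation error via the fundamental theorem of calculus as
\begin{equation*}
\tau^t f^k = \frac{1}{s}\int_{t^k}^{t^{k+1}} \bigl[ f'(\tau) - f'(t^{k+1/2}) \bigr]\, d\tau .
\end{equation*}
Since $f\in H^3(0,T) \hookrightarrow C^2([0,T])$, I may apply Taylor's theorem with integral remainder to $f'$ around $t^{k+1/2}$:
\begin{equation*}
f'(\tau) - f'(t^{k+1/2}) = f''(t^{k+1/2})(\tau - t^{k+1/2}) + \int_{t^{k+1/2}}^{\tau} (\tau - \sigma)\, f'''(\sigma)\, d\sigma .
\end{equation*}
Substituting and observing the key symmetry identity $\int_{t^k}^{t^{k+1}} (\tau - t^{k+1/2})\, d\tau = 0$, the leading $f''$ term vanishes and only the remainder survives. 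Then, bounding $|\tau - \sigma| \le s$ in the double integral and applying Fubini followed by Cauchy–Schwarz yields
\begin{equation*}
|\tau^t f^k|^2 \le C\, s^3 \int_{t^k}^{t^{k+1}} |f'''(\sigma)|^2\, d\sigma .
\end{equation*}
Multiplying by $s$ and summing over $k$ telescopes the piecewise $L^2$ integrals into $\nrm{f'''}_{L^2(0,T)}^2$, producing the desired estimate $\nrm{\tau^t f}_{L_s^2(0,T)}^2 \le C s^4 \nrm{f}_{H^3(0,T)}^2$.

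For $m=1$ the same argument is carried out with a shallower Taylor expansion (one integral remainder involving $f''$ only), and the symmetry step is not needed; for $m=0$ one simply bounds both summands of $\tau^t f^k$ directly, using Cauchy–Schwarz on the integral representation above and the fact that $|f'(t^{k+1/2})|$ is controlled pointwise via $H^1 \hookrightarrow C^0$. In each case, the $s$-dependence tracks the order of the Taylor remainder after the $s$-factor from summation is accounted for.

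The only substantive obstacle is the $m=2$ case, and specifically the observation that the first-order Taylor term integrates to zero across the symmetric interval about $t^{k+1/2}$: without this cancellation one would obtain only $O(s)$, not $O(s^2)$. The remaining manipulations are routine Bramble–Hilbert / Peano-kernel bookkeeping, and Sobolev embeddings in one variable are mild enough to accommodate the endpoint evaluations without difficulty.
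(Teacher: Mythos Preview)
Your approach via the integral form of Taylor's theorem about the midpoint is precisely what the paper indicates (the paper omits all details, stating only that the proof ``is based on the integral form of the Taylor expansion in time''). Your handling of the cases $m=1$ and $m=2$ is correct, and the symmetry cancellation of the $f''$ term that you isolate is indeed the source of the sharp $O(s^2)$ rate.

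There is, however, a genuine gap in your $m=0$ argument. To control $|f'(t^{k+1/2})|$ pointwise via $H^1 \hookrightarrow C^0$ you need $f'\in H^1$, i.e., $f\in H^2$; this yields a bound by $\nrm{f}_{H^2}$, not $\nrm{f}_{H^1}$. In fact the $m=0$ bound as stated (with $C$ depending only on $T$) is false: take $f'=g$ a smooth bump of height $1$ and width $\epsilon\ll s$ centered at a single midpoint $t^{1/2}$. Then $\nrm{f}_{H^1}^2\sim\epsilon$, while $\tau^t f^0 = s^{-1}\!\int_0^s g - g(s/2)\approx -1$, so $\nrm{\tau^t f}_{L_s^2}^2\ge s\,|\tau^t f^0|^2\sim s$, which violates the claimed inequality as $\epsilon\to 0$ with $s$ fixed. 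The paper's single use of the $m=0$ case is in a setting where the function in question carries the extra time regularity needed, so the imprecision is harmless in context; but you should not claim to have proved the $m=0$ inequality as written, because it cannot be proved.
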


	\begin{proposition}
	\label{Prop-A.1}
For $f \in H^2 (0,T)$, we have 
	\begin{eqnarray}
  &&
  \nrm{D_{t/2}^2 f }_{L_s^2 (0,T)}  := 
  \sqrt{s \sum_{k=0}^{[T/s]-1} \left( D_{t/2}^2 f^{k+1/2} \right)^2}
 \le C \nrm{f}_{H^2 (0,T)}  ,  \label{est-1d-time-2-1} 
\\
  &&  
  \mbox{with}  \quad   
 D_{t/2}^2 f^{k+1/2}  = \frac{4 \left( f^{k+1} - 2 f (\  \cdot\ , t^{k+1/2} ) + f^k \right)}{s^2} , 
   \nonumber 
\\
  &&
  \nrm{D_t^2 f }_{L_s^2 (0,T)}  := 
  \sqrt{s \sum_{k=0}^{[T/s]-1} \left( D_t^2 f^k \right)^2}
 \le C \nrm{f}_{H^2 (0,T)}  ,  \label{est-1d-time-2-2} 
\\
  &&  
  \mbox{with}  \quad  
 D_t^2 f^k  = \frac{f^{k+1} - 2 f^k + f^{k-1} }{s^2} ,  \nonumber 
	\end{eqnarray}
where $C$ only depends on $T$. 
   \end{proposition}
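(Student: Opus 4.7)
Both bounds rest on a single basic identity: for $f \in H^2$ of a neighborhood of $[t-\eta,\, t+\eta]$,
\[
f(t + \eta) - 2 f(t) + f(t - \eta) = \int_{-\eta}^{\eta} (\eta - |u|)\, f''(t + u)\, du.
\]
This follows from the one-dimensional embedding $H^2 \hookrightarrow C^1$ together with the fundamental theorem of calculus applied to $f'$ (or equivalently, two integrations by parts on $\int_{-\eta}^\eta (\eta - |u|) f''$ after splitting the integral at $u = 0$). Once this identity is in hand, the rest is Cauchy--Schwarz plus telescoping.

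For (\ref{est-1d-time-2-1}), I would apply the identity at $t = t^{k+1/2}$ with $\eta = s/2$. Cauchy--Schwarz against the weight $(s/2 - |u|)$, whose squared $L^2((-s/2,s/2))$ norm equals $s^3/12$, gives
\[
|D_{t/2}^2 f^{k+1/2}|^2 \le \frac{16}{s^4}\cdot\frac{s^3}{12}\int_{t^k}^{t^{k+1}}|f''(\tau)|^2\, d\tau = \frac{4}{3s}\int_{t^k}^{t^{k+1}} |f''(\tau)|^2\, d\tau.
\]
Multiplying by $s$ and summing over $k = 0, \ldots, [T/s]-1$, the right-hand side telescopes to $\tfrac{4}{3}\|f''\|_{L^2(0,T)}^2 \le C\|f\|_{H^2(0,T)}^2$, which is (\ref{est-1d-time-2-1}).

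For (\ref{est-1d-time-2-2}), I would apply the same identity at $t = t^k$ with $\eta = s$. The weight $(s - |u|)$ now has squared $L^2$ norm $2s^3/3$, yielding
\[
|D_t^2 f^k|^2 \le \frac{2}{3s}\int_{t^{k-1}}^{t^{k+1}} |f''(\tau)|^2\, d\tau.
\]
Summing gives the same $C\|f\|_{H^2(0,T)}^2$ bound, the only difference being a harmless factor of two coming from the fact that each subinterval $[t^{k-1},t^k]$ is covered by the $k$-windows of at most two neighboring indices.

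I do not anticipate any real obstacle; the estimate is a quantitative restatement of the fact that second divided differences of an $H^2$ function lie in $\ell^2_s$ with constant controlled by $\|f''\|_{L^2}$. The only mild bookkeeping is to ensure that the evaluation points $t^{k-1},\, t^k,\, t^{k+1/2},\, t^{k+1}$ all stay inside $[0,T]$ for the admissible range of $k$, which is built into the definition of the discrete $L^2_s$ norm used in the statement.
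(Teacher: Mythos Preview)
Your proposal is correct and follows essentially the same approach the paper indicates: the authors state only that the proof ``is based on the integral form of the Taylor expansion in time'' and skip the details, and the Peano-kernel identity $f(t+\eta)-2f(t)+f(t-\eta)=\int_{-\eta}^{\eta}(\eta-|u|)f''(t+u)\,du$ you use is precisely that integral remainder. Your Cauchy--Schwarz and telescoping steps are the natural completion, and your remark about the endpoint bookkeeping (the $k=0$ term of \eqref{est-1d-time-2-2} formally needs $f^{-1}$) correctly flags a minor sloppiness already present in the statement rather than a defect in your argument.
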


The proof of Proposition~\ref{Prop-A.0}-\ref{Prop-A.1} is based on the integral form of the Taylor expansion in time. The details are skipped for the sake of brevity. 
%Unfortunately, this simple methodology cannot be applied to the case of multiple space dimensions. 
In the spatial discretization, the following proposition gives a corresponding $O(h^2)$ truncation error bound. 
%Its proof will be given in App.~\ref{sub-app-proof-prop-a1}. 

	\begin{proposition}
	\label{Prop-A.2}
If $f \in {\cal B}^{N/2}$ has a regularity $f \in H^{8}_{per} (\Omega)$, we have 
	\begin{equation} 
\nrm{\Delta^k f - \Delta_h^k f}_{L_h^2 (\Omega)}  
\le C h^2 \nrm{f}_{H^{2 + 2k} (\Omega)} ,  \quad 
  \mbox{for}  \quad k= 1, 2, 3, 
	\label{est-2d-1}
	\end{equation} 
where $C$ only depends on $L_0$ and $\nrm{g}_{L_h^2 (\Omega)} = \sqrt{h^2 \sum_{i,j=0}^{N-1} g_{i,j}^2}$.
	\end{proposition}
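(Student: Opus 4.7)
The proposition is a Fourier-space comparison result: for $f$ in the trigonometric polynomial space $\mathcal{B}^{N/2}$, both the continuous Laplacian $\Delta$ and the standard five-point discrete Laplacian $\Delta_h$ act diagonally in the Fourier basis, so the plan is to reduce the estimate to a mode-by-mode comparison of symbols and then apply a discrete Parseval identity. Writing $f(x,y)=\sum_{|k|,|l|\le N/2}\widehat f_{k,l}\,e^{\frac{2\pi i}{L}(kx+ly)}$, each Fourier mode is an eigenfunction of both operators, with eigenvalues
\begin{equation*}
\mu_{k,l}:=-\frac{4\pi^{2}}{L^{2}}(k^{2}+l^{2}),\qquad
\nu_{k,l}:=-\frac{4}{h^{2}}\!\left[\sin^{2}\!\left(\tfrac{\pi k h}{L}\right)+\sin^{2}\!\left(\tfrac{\pi l h}{L}\right)\right].
\end{equation*}
Since $\Delta$ preserves $\mathcal{B}^{N/2}$ and $\Delta_h$ preserves the discrete Fourier basis on the $N\times N$ grid, iterating gives the symbol $\mu_{k,l}^{m}$ for $\Delta^{m}f$ and $\nu_{k,l}^{m}$ for $\Delta_h^{m}f$ at the grid points. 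A discrete Parseval identity (valid without aliasing precisely because $f\in\mathcal{B}^{N/2}$) then yields
\begin{equation*}
\nrm{\Delta^{m}f-\Delta_h^{m}f}_{L_h^{2}(\Omega)}^{2}
=L^{2}\sum_{k,l}\bigl|\mu_{k,l}^{m}-\nu_{k,l}^{m}\bigr|^{2}\,|\widehat f_{k,l}|^{2}.
\end{equation*}

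Next I would estimate the symbol difference. Because $|k|,|l|\le N/2$ implies $|\pi k h/L|,|\pi l h/L|\le \pi/2$, the Taylor expansion $\sin\theta=\theta-\theta^{3}/6+O(\theta^{5})$ gives $\bigl|\theta^{2}-\sin^{2}\theta\bigr|\le C\theta^{4}$ on this range, so
\begin{equation*}
\bigl|\mu_{k,l}-\nu_{k,l}\bigr|\le C\,\frac{h^{2}(k^{4}+l^{4})}{L^{4}}.
\end{equation*}
Combined with the trivial uniform bounds $|\mu_{k,l}|,|\nu_{k,l}|\le C(k^{2}+l^{2})/L^{2}$ (the discrete bound uses $\sin^{2}\theta\le\theta^{2}$), the telescoping identity
\begin{equation*}
\mu^{m}-\nu^{m}=(\mu-\nu)\sum_{j=0}^{m-1}\mu^{j}\nu^{m-1-j}
\end{equation*}
then yields, for $m=1,2,3$,
\begin{equation*}
\bigl|\mu_{k,l}^{m}-\nu_{k,l}^{m}\bigr|
\le C\,\frac{h^{2}(k^{2}+l^{2})^{m+1}}{L^{2m+2}}.
\end{equation*}

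Inserting this into the Parseval expression and using the Fourier characterization of $H^{2m+2}$,
\begin{equation*}
\nrm{f}_{H^{2m+2}(\Omega)}^{2}\sim\sum_{k,l}\bigl(1+(k^{2}+l^{2})/L^{2}\bigr)^{2m+2}|\widehat f_{k,l}|^{2},
\end{equation*}
gives
\begin{equation*}
\nrm{\Delta^{m}f-\Delta_h^{m}f}_{L_h^{2}(\Omega)}^{2}
\le C\,h^{4}\sum_{k,l}\frac{(k^{2}+l^{2})^{2m+2}}{L^{4m+2}}\,|\widehat f_{k,l}|^{2}
\le C\,h^{4}\,\nrm{f}_{H^{2m+2}(\Omega)}^{2},
\end{equation*}
and taking square roots concludes the argument. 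The only genuinely delicate step is the telescoping bound: one must verify that the uniform symbol bounds grow as $(k^{2}+l^{2})/L^{2}$ despite the $\sin^{2}$ saturation, and keep careful track of the power $(k^{2}+l^{2})^{2m+2}$ needed from $f$, which is exactly $H^{4},H^{6},H^{8}$ for $m=1,2,3$ — matching the regularity $f\in H^{8}_{per}$ used in the hypothesis. The verification of the discrete Parseval identity itself on the slightly asymmetric index range $-N/2+1\le k,l\le N/2$ is routine but should be done cleanly.
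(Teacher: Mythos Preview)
Your proposal is correct and follows essentially the same approach the paper sketches: a Fourier expansion of both $\Delta^m f$ and $\Delta_h^m f$ at the discrete grid level, a mode-by-mode comparison of the symbols (the paper's ``discrete Fourier coefficients''), and an appeal to the absence of aliasing afforded by $f\in\mathcal{B}^{N/2}$ to invoke a discrete Parseval identity. The paper omits all details (``skipped for brevity'' and a reference to \cite{wang11a}), whereas you supply them explicitly via the symbol bound $|\mu_{k,l}-\nu_{k,l}|\le Ch^{2}(k^{4}+l^{4})/L^{4}$ and the telescoping factorization for $\mu^{m}-\nu^{m}$; these are exactly the ingredients the cited methodology uses.
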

	
The key point of this proposition is that the projection approximation solution 
$\Phi_N \in {\cal B}^{N/2}$ so that an aliasing error is avoided in its centered difference 
approximation. This consistency analysis can be carried out by a detailed Fourier 
expansion of both $\Delta_h^k \Phi_N$ and $\Delta \Phi_N$ at the discrete level, 
and the comparison of the corresponding discrete Fourier coefficients leads to the 
above estimates, following a similar methodology as in \cite{wang11a}. 
The details are skipped for brevity.

Observe that the ${\mathcal O}\left(s^2\right)$ correction in the definition of (\ref{consistency-Phi-3}) is added so that a higher order consistency between $\Psi_N$ at $t^{k+1/2}$ and $(\Phi_N^{k+1} - \Phi_N^k)/s$ can be derived. Looking at the time derivative of the projection operator, we observe that
	\begin{equation} 
\frac{\partial^k}{\partial t^k}  \Phi_N (\x, t) = \frac{\partial^k}{\partial t^k}  {\cal P}_N \Phi (\x,t)  = {\cal P}_N  \frac{\partial^k  \Phi (\x,t) }{\partial t^k} \ . 
	\label{consistency-Phi-5}
	\end{equation} 
In other words, $\partial_t^k \Phi_N$ is the truncation of $\partial_t^k \Phi$ for any $k \ge 0$, since projection and differentiation commute.
This in turn implies an accurate approximation of the corresponding temporal derivative, at any fixed time:  
	\begin{equation} 
\nrm{\partial_t^k \left( \Phi_N - \Phi \right)}_{H^r} \le  C  h^m \nrm{\partial_t^k \Phi}_{H^{m+r}} \ , 
	\label{consistency-Phi-6}
	\end{equation}
for $m,\, r \ge 0$, and  $0 \le k \le 2$.

Since the exact solution of the MPFC equation has the regularity (\ref{MPFC-regularity}), the approximation estimates (\ref{consistency-Phi-4})-(\ref{consistency-Phi-6}) imply the same regularity for the projection solution $\Phi_N$, $\Psi_N$, by taking $m=0$: 
	\begin{equation}
\left\|  \Phi_N  \right\|_{H^r}  \le  C  \left\|  \Phi \right\|_{H^r} ,  \quad 
 \left\|  \partial_t^k \left( \Phi_N - \Phi \right) \right\|_{H^r}
  \le  C  \left\|  \partial_t^k \Phi \right\|_{H^r} ,
	\label{MPFC-regularity-projection}
	\end{equation} 
at any fixed time. 	
	
We define the following quantities: 
 \begin{equation}
 \begin{array}{rclrclrcl} 
F_1^{k+1/2} &\!\!\!=\!\!\!& \frac{\Psi_N^{k+1} - \Psi_N^k}{s} \ ,  & F_{1e}^{k+1/2} &\!\!\!=\!\!\!& \left( \partial_t^2 \Phi_N \right) (\  \cdot\ , t^{k+1/2} ) \ , 
	\\
F_2^{k+1/2} &\!\!\!=\!\!\!&  \frac{\Phi_N^{k+1} - \Phi_N^k}{s} \ , & F_{2e}^{k+1/2} &\!\!\!=\!\!\!& 
\left( \partial_t \Phi_N \right) (\  \cdot\ , t^{k+1/2} )  \ , 
	\\
F_3^{k+1/2} &\!\!\!=\!\!\!& \Dh \left(  ( \Phi_N^3 )^{k+1/2}  \right) \ , & F_{3e}^{k+1/2} &\!\!\!=\!\!\!& \Delta \left(  (\Phi_N)^3  \right) (\  \cdot\ , t^{k+1/2} )\ , 
        \\
F_4^{k+1/2} &\!\!\!=\!\!\!& \Dh \left(  \Phi_N^{k+1/2}  \right) \ , & F_{4e}^{k+1/2} &\!\!\!=\!\!\!& \Delta \left(  \Phi_N  \right) (\  \cdot\ , t^{k+1/2} )\ ,
  	\\
F_5^{k+1/2} &\!\!\!=\!\!\!& \Dh^2 \left( \frac32 \Phi_N^k 
- \frac12 \Phi_N^{k-1} \right) \ , & F_{4e}^{k+1/2} &\!\!\!=\!\!\!& 
  \left( \Delta^2 \Phi_N \right) (\  \cdot\ , t^{k+1/2} ) \ , 
	\\
F_6^{k+1/2} &\!\!\!=\!\!\!& \Dh^3 \Phi_N^{k+1/2} \ , & F_{6e}^{k+1/2} &\!\!\!=\!\!\!& 
\left( \Delta^3 \Phi_N \right) (\  \cdot\ , t^{k+1/2} ) \ ,  
           \\
F_7^{k+1/2} &\!\!\!=\!\!\!& \frac{ \Psi_N^{k+1} + \Psi_N^k}{2}\ . & & &    
	\end{array}
	\label{truncation-decomp-1}
	\end{equation}
Moreover, the corresponding values for the exact solution are denoted by 
\begin{eqnarray}
  &&
  F_{1en}^{k+1/2} = \partial_t^2 \Phi (\  \cdot\ , t^{k+1/2} ) ,   \quad 
  F_{2en}^{k+1/2} = \partial_t \Phi (\  \cdot\ , t^{k+1/2} ) ,   \nonumber 
\\
  && 
  F_{3en}^{k+1/2} = \left( \Delta \Phi^3 \right) (\  \cdot\ , t^{k+1/2} ) ,  \quad 
  F_{4en}^{k+1/2} = \left( \Delta \Phi \right) (\  \cdot\ , t^{k+1/2} ) , \nonumber 
\\
  &&   
  F_{5en}^{k+1/2} = \left( \Delta^2 \Phi \right) (\  \cdot\ , t^{k+1/2} ) ,  \quad  
  F_{6en}^{k+1/2} = \left( \Delta^3 \Phi \right) (\  \cdot\ , t^{k+1/2} ) .
\end{eqnarray}
Note that all these quantities are defined on the numerical grid (in space) point-wise. 

  First we look at the first order time derivative term, $F_2$, $F_{2e}$ and $F_{2en}$. 
A direct application of Proposition~\ref{Prop-A.0}
indicates that (by taking $m=2$): 
       \begin{eqnarray} 
\nrm{ F_2 - F_{2e} }_{L_s^2 (0,T)}  \le C s^2 \nrm{ \Phi_N}_{H^{3} (0,T)} 
   \le C s^2 \nrm{ \Phi}_{H^{3} (0,T)} ,
	\label{truncation-est-2nd-2-1} 
        \end{eqnarray} 
for each fixed grid point $(i,j)$, in which the second part of (\ref{MPFC-regularity-projection}) 
was used in the second step. Meanwhile, the approximation estimate 
(\ref{consistency-Phi-6}) yields (with $k=1$, $m=2$):  
\begin{eqnarray} 
  \left\|  F_{2e}^{k+1/2} - F_{2en}^{k+1/2}  \right\| 
  \le C h^2 \left\|  \partial_t \Phi  \right\|_{H^2} .
  \label{truncation-est-2nd-2-2} 
        \end{eqnarray} 
Therefore, a careful calculation shows that a combination of the above two estimates 
results in 
        \begin{eqnarray} 
\nrm{ F_2 - F_{2en} }_{L_s^2 \left(0,T; L_h^2(\Omega)\right)}  
  \le C ( s^2 + h^2 )  \left( \nrm{ \Phi}_{H^{3} (0,T; L^2)} 
  +  \left\|  \Phi  \right\|_{W^{1,\infty} (0,T; H^2)}  \right) . 
	\label{truncation-est-2nd-2-3} 
        \end{eqnarray} 
        
  Similar analysis can be applied to the second order time derivative terms. 
The construction (\ref{consistency-Phi-3}) for the approximate solution $\Psi_N$ gives 
\begin{eqnarray} 
  F_1^{k+1/2} = \frac{\partial_t \Phi_N^{k+1} - \partial_t \Phi_N^k}{s} 
  - \frac{s^2}{12} \cdot \frac{\partial_t^3 \Phi_N^{k+1} - \partial_t^3 \Phi_N^k}{s}  
  := F_{11}^{k+1/2} - \frac{s^2}{12} F_{12}^{k+1/2} ,  \label{truncation-est-2nd-1-1} 
\end{eqnarray}   
in which $F_{11}$ and $F_{12}$ are finite difference (in time) approximation to 
$\partial_t^2 \Phi_N$, $\partial_t^4 \Phi_N$, respectively. In more detail, if we denote 
$F_{11e}^{k+1/2} = \partial_t^2 \Phi_N (\  \cdot\ , t^{k+1/2} )$, 
$F_{12e}^{k+1/2} = \partial_t^4 \Phi_N (\  \cdot\ , t^{k+1/2} )$, the following estimates 
can be derived by using Proposition~\ref{Prop-A.0} (with $m=2$ and $m=0$): 
       \begin{eqnarray} 
       &&
    \nrm{ F_{11} - F_{11e} }_{L_s^2 (0,T)}  \le C s^2 \nrm{ \Phi_N}_{H^{4} (0,T)} 
   \le C s^2 \nrm{\Phi}_{H^{4} (0,T)} ,   \label{truncation-est-2nd-1-2} 
\\
    &&
    \nrm{ F_{12} - F_{12e} }_{L_s^2 (0,T)}  \le C \nrm{ \Phi_N}_{H^{4} (0,T)} 
   \le C \nrm{\Phi}_{H^{4} (0,T)} ,   \label{truncation-est-2nd-1-3}       
        \end{eqnarray} 
for each fixed grid point $(i,j)$. Again, the approximation estimate 
(\ref{consistency-Phi-6}) gives (with $k=2$, $m=2$):  
\begin{eqnarray} 
  \left\|  F_{1e}^{k+1/2} - F_{1en}^{k+1/2}  \right\| 
  \le C h^2 \left\|  \partial_t^2 \Phi  \right\|_{H^2} .
  \label{truncation-est-2nd-1-4} 
        \end{eqnarray} 
A combination of (\ref{truncation-est-2nd-1-1})-(\ref{truncation-est-2nd-1-4}) leads to 
        \begin{eqnarray} 
\nrm{ F_1 - F_{1en} }_{L_s^2 \left(0,T; L_h^2(\Omega)\right)}  
  \le C ( s^2 + h^2 )  \left( \nrm{ \Phi}_{H^4 (0,T; L^2)} 
  +  \left\|  \Phi  \right\|_{W^{2,\infty} (0,T; H^2)}  \right) . 
	\label{truncation-est-2nd-1-5} 
        \end{eqnarray} 

For the convex diffusion term $F_4$, $F_{4e}$ and $F_{4en}$, we start from an 
application of Prop.~\ref{Prop-A.2} 
(recall that $\Phi_N^{k+1/2} = \frac{\Phi_N^{k+1} + \Phi_N^k}{2}$): 
\begin{eqnarray} 
  \nrm{F_4^{k+1/2} -  \Delta \left(  \Phi_N^{k+1/2}  \right) }_{L_h^2 (\Omega)}  
  &\le& C h^2  \nrm{\Phi_N^{k+1/2}}_{H^{4}(\Omega)}   \nonumber 
\\
  &\le& C h^2  \nrm{\Phi_N}_{L^{\infty} (0,T; H^4)} .   
  	\label{truncation-est-2nd-4-1} 
        \end{eqnarray} 	
Meanwhile, a comparison between $\Phi_N^{k+1/2}$ and $ \Phi_N (\  \cdot\ , t^{k+1/2} )$ 
shows that 
\begin{eqnarray} 
  \Phi_N^{k+1/2} - \Phi_N (\  \cdot\ , t^{k+1/2} ) 
  = \frac12 s^2 D_{t/2}^2 \Phi_N^{k+1/2} . \label{truncation-est-2nd-4-2} 
\end{eqnarray} 
On the other hand, an application of Prop.~\ref{Prop-A.1} gives  
\begin{eqnarray}
  \nrm{D_{t/2}^2 \Delta \Phi_N}_{L_s^2 (0,T)}   
 \le C \nrm{\Delta \Phi_N}_{H^2 (0,T)}  ,  \label{truncation-est-2nd-4-3}  
\end{eqnarray}
at each fixed grid $(i,j)$. As a result of 
(\ref{truncation-est-2nd-4-1})-(\ref{truncation-est-2nd-4-3}), we get 
\begin{eqnarray} 
  \nrm{ F_4 - F_{4e} }_{L_s^2 \left(0,T; L_h^2(\Omega)\right)}  
  &\le& C ( s^2 + h^2 )  \left( \nrm{ \Phi_N}_{L^\infty (0,T; H^4)} 
  +  \left\|  \Phi_N  \right\|_{H^2 (0,T; H^2)}  \right)   \nonumber 
\\
   &\le& C ( s^2 + h^2 )  \left( \nrm{ \Phi}_{L^\infty (0,T; H^4)} 
  +  \left\|  \Phi  \right\|_{H^2 (0,T; H^2)}  \right) .  
	\label{truncation-est-2nd-4-4} 
        \end{eqnarray} 
The approximation estimate of $F_{4e}$ to $F_{4en}$ is straightforward, from 
(\ref{consistency-Phi-4}) (with $m=2$): 
\begin{eqnarray} 
  \nrm{ F_{4e} - F_{4en}}_{L_s^2 \left(0,T; L_h^2(\Omega)\right)}  
   \le C h^2  \nrm{ \Phi}_{L^\infty (0,T; H^4)} .  
	\label{truncation-est-2nd-4-5} 
\end{eqnarray} 
Consequently, we arrive at 
\begin{eqnarray} 
  \nrm{ F_4 - F_{4en} }_{L_s^2 \left(0,T; L_h^2(\Omega)\right)}  
  \le C ( s^2 + h^2 )  \left( \nrm{ \Phi}_{L^\infty (0,T; H^4)} 
  +  \left\|  \Phi  \right\|_{H^2 (0,T; H^2)}  \right) .  
	\label{truncation-est-2nd-4-6} 
\end{eqnarray} 

The other convex diffusion terms $F_6$, $F_{6e}$ and $F_{6en}$ can be analyzed 
in the same way. The details are skipped for simplicity. 
\begin{eqnarray} 
  \nrm{ F_6 - F_{6en} }_{L_s^2 \left(0,T; L_h^2(\Omega)\right)}  
  \le C ( s^2 + h^2 )  \left( \nrm{ \Phi}_{L^\infty (0,T; H^8)} 
  +  \left\|  \Phi  \right\|_{H^2 (0,T; H^6)}  \right) .  
	\label{truncation-est-2nd-6} 
\end{eqnarray}  

The analysis for the concave diffusion terms $F_5$, $F_{5e}$ and $F_{5en}$ is 
similar to that of the convex diffusion term; yet more details are involved. 
An application of Prop.~\ref{Prop-A.2} gives  
\begin{eqnarray} 
  \nrm{F_5^{k+1/2} -  \Delta^2 \left(  \frac32 \Phi_N^k - \frac12 \Phi_N^{k-1}  
  \right) }_{L_h^2 (\Omega)}  
  &\le& C h^2  \nrm{\left(  \frac32 \Phi_N^k - \frac12 \Phi_N^{k-1}  
  \right)}_{H^{6}(\Omega)}   \nonumber 
\\
  &\le& C h^2  \nrm{\Phi_N}_{L^{\infty} (0,T; H^6)} .   
  	\label{truncation-est-2nd-5-1} 
        \end{eqnarray} 	
Meanwhile, a comparison between $\frac32 \Phi_N^k - \frac12 \Phi_N^{k-1}$ 
and $ \Phi_N (\  \cdot\ , t^{k+1/2} )$ reveals that 
\begin{eqnarray} 
  \left(  \frac32 \Phi_N^k - \frac12 \Phi_N^{k-1} \right) - \Phi_N (\  \cdot\ , t^{k+1/2} ) 
  = \frac12 s^2 D_{t/2}^2 \Phi_N^{k+1/2} 
  - \frac12 s^2 D_t^2 \Phi_N^k  . \label{truncation-est-2nd-5-2} 
\end{eqnarray} 
Similarly, applications of Prop.~\ref{Prop-A.1} imply  
\begin{eqnarray}
  && 
  \nrm{D_{t/2}^2 \Delta^2 \Phi_N}_{L_s^2 (0,T)}   
 \le C \nrm{\Delta^2 \Phi_N}_{H^2 (0,T)}  ,  \nonumber 
\\
  &&
  \nrm{D_t^2 \Delta^2 \Phi_N}_{L_s^2 (0,T)}   
 \le C \nrm{\Delta^2 \Phi_N}_{H^2 (0,T)}  , 
 \label{truncation-est-2nd-5-3}  
\end{eqnarray}
at each fixed grid $(i,j)$. Then we obtain 
\begin{eqnarray} 
  \nrm{ F_5 - F_{5e} }_{L_s^2 \left(0,T; L_h^2(\Omega)\right)}  
  &\le& C ( s^2 + h^2 )  \left( \nrm{ \Phi_N}_{L^\infty (0,T; H^6)} 
  +  \left\|  \Phi_N  \right\|_{H^2 (0,T; H^4)}  \right)   \nonumber 
\\
   &\le& C ( s^2 + h^2 )  \left( \nrm{ \Phi}_{L^\infty (0,T; H^6)} 
  +  \left\|  \Phi  \right\|_{H^2 (0,T; H^4)}  \right) .  
	\label{truncation-est-2nd-5-4} 
        \end{eqnarray} 
The approximation estimate of $F_{5e}$ to $F_{5en}$ can be derived in the 
same manner: 
\begin{eqnarray} 
  \nrm{ F_{5e} - F_{5en}}_{L_s^2 \left(0,T; L_h^2(\Omega)\right)}  
   \le C h^2  \nrm{ \Phi}_{L^\infty (0,T; H^6)} .  
	\label{truncation-est-2nd-5-5} 
\end{eqnarray} 
That gives the consistency estimate for the concave diffusion: 
\begin{eqnarray} 
  \nrm{ F_5 - F_{5en} }_{L_s^2 \left(0,T; L_h^2(\Omega)\right)}  
  \le C ( s^2 + h^2 )  \left( \nrm{ \Phi}_{L^\infty (0,T; H^6)} 
  +  \left\|  \Phi  \right\|_{H^2 (0,T; H^4)}  \right) .  
	\label{truncation-est-2nd-5-6} 
\end{eqnarray}

Next we look at the nonlinear term.  A direct application of Prop.~\ref{Prop-A.2} indicates that 
	\begin{eqnarray} 
\nrm{F_3^{k+1/2} - \Delta \left(  ( \Phi_N^3 )^{k+1/2}  \right) }_{L_h^2 (\Omega)}  
&\le& C h^2 \nrm{ ( \Phi_N^3 )^{k+1/2} }_{H^{4}(\Omega) }  \nonumber 
\\
  &\le& 
   C h^2  \left(  \left\|  \Phi_N^{k+1}  \right\|_{H^4 (\Omega)}^3 
   +   \left\|  \Phi_N^k  \right\|_{H^4 (\Omega)}^3 \right)  , 
  	\label{truncation-est-2nd-3-1}
	\end{eqnarray} 
in which a product expansion and a Sobolev imbedding are used in the second step. 
Subsequently, we need to compare $(\Phi_N^3)^{k+1/2}$ and 
$(\Phi_N^3 ) (\  \cdot\ , t^{k+1/2} )$ and derive an estimate. A careful 
calculation reveals that 
\begin{eqnarray} 
  (\Phi_N^3)^{k+1/2} - (\Phi_N^3 ) (\  \cdot\ , t^{k+1/2} ) 
  &=& 
  \frac12 \left( (\Phi_N^{k+1})^2 + (\Phi_N^k)^2 \right) 
  \cdot \frac18 s^2 \left( D_{t/2}^2 \Phi_N \right)^{k+1/2}  \nonumber 
\\
  && 
  + \frac18 s^2 \left( D_{t/2}^2 \Phi_N^2 \right)^{k+1/2}  
  \cdot \Phi_N (\  \cdot\ , t^{k+1/2} )  .  \label{truncation-est-2nd-3-2}
\end{eqnarray}  
Meanwhile, by the following observation of a nonlinear expansion: 
\begin{equation} 
  \Delta (f g h) = f g \Delta h + f h \Delta g + gh \Delta f 
  + 2 f \nabla g \nabla h  + 2 g \nabla f \nabla h   
  + 2 h \nabla f \nabla g ,   \label{truncation-est-2nd-3-3}
\end{equation}  
we obtain 
\begin{eqnarray}  
  &&
  \left\|  (\Phi_N^3)^{k+1/2} - (\Phi_N^3 ) (\  \cdot\ , t^{k+1/2} )  \right\|_{L_h^2 (\Omega)} 
  \nonumber 
\\
  &\le& 
  C \tilde{C} \left( \tilde{C} + 1 \right) s^2 
  \left(  \left\| \left( D_{t/2}^2 \Phi_N \right)^{k+1/2}   \right\|_{H^2} 
  +   \left\| \left( D_{t/2}^2 \Phi_N^2 \right)^{k+1/2}   \right\|_{H^2}  \right) ,  \nonumber 
\\
  && \mbox{with} \quad 
  \tilde{C} = \left\| \Phi_N  \right\|_{L^\infty (0,T; W^{2,\infty} (\Omega) )}   
  \le C \left\| \Phi_N  \right\|_{L^\infty (0,T; H^4 (\Omega) )} . 
  \label{truncation-est-2nd-3-4}
\end{eqnarray}  
Subsequently, applications of Prop.~\ref{Prop-A.1} imply  
\begin{eqnarray}
  && 
  \nrm{D_{t/2}^2 \Phi_N}_{L_s^2 (0,T; H^2)}   
 \le C \nrm{\Phi_N}_{H^2 (0,T; H^2)}  ,  \label{truncation-est-2nd-3-5}
\\
  &&
  \nrm{D_{t/2}^2 (\Phi_N^2) }_{L_s^2 (0,T; H^2)}   
 \le C \nrm{\Phi_N^2}_{H^2 (0,T; H^2)}  . 
 \label{truncation-est-2nd-3-6}  
\end{eqnarray}
Note that the second estimate is involved with a nonlinear term $\Phi_N^2$. 
A detailed expansion in its first and second order time derivatives shows that
\begin{eqnarray} 
  \partial_t (\Phi_N^2) = 2 \Phi_N \partial_t \Phi_N ,  \quad  
  \partial_t^2 (\Phi_N^2) = 2 \Phi_N \partial_t^2 \Phi_N  
  + 2 ( \partial_t \Phi_N )^2 ,  \label{truncation-est-2nd-3-7}  
\end{eqnarray}
which in turn leads to
\begin{eqnarray} 
  \nrm{\Phi_N^2}_{H^2 (0,T)}  
  &\le& 
  C \left(  \nrm{\Phi_N}_{L^\infty (0,T)}  \cdot  \nrm{\Phi_N}_{H^2 (0,T)}
  +  \nrm{\Phi_N}_{W^{1,4} (0,T)}^2 \right)  \nonumber 
\\
  &\le& 
  C  \nrm{\Phi_N}_{H^2 (0,T)}^2 ,   \label{truncation-est-2nd-3-8}  
\end{eqnarray}
at each fixed grid point $(i,j)$, with a 1-D Sobolev imbedding applied at the last step.
Going back to (\ref{truncation-est-2nd-3-6}) gives 
\begin{eqnarray} 
  \nrm{D_{t/2}^2 (\Phi_N^2) }_{L_s^2 (0,T; H^2)}   
 \le C \nrm{\Phi_N}_{H^2 (0,T; H^2)}^2  . 
 \label{truncation-est-2nd-3-9}  
\end{eqnarray}
Therefore, a substitution of (\ref{truncation-est-2nd-3-5}) and 
(\ref{truncation-est-2nd-3-9}) into (\ref{truncation-est-2nd-3-4}) yields 
\begin{eqnarray}  
  &&
  \left\|  \Delta \left( (\Phi_N^3)^{k+1/2} 
  - (\Phi_N^3 ) (\  \cdot\ , t^{k+1/2} )  \right) \right\|_{L_s^2 (0, T; L_h^2 (\Omega) )} 
  \nonumber 
\\
  &\le& 
  C \left( \left\| \Phi_N  \right\|_{L^\infty (0,T; H^4 (\Omega) )}^2 
  \cdot \nrm{\Phi_N}_{H^2 (0,T; H^2)}^2 + 1  \right) s^2 . 
  \label{truncation-est-2nd-3-10}
\end{eqnarray}  
For the comparison between $F_{3e}$ and $F_{3en}$, we cannot apply 
(\ref{consistency-Phi-4}) directly, since $\Phi_N^3$ is not in ${\cal B}^{N/2}$. 
We observe the difference between $\Phi_N^3$ and $\Phi^3$ is given by 
\begin{eqnarray} 
  \Phi_N^3 - \Phi^3  =  \left( \Phi_N - \Phi  \right) 
   \left( \Phi_N^2 + \Phi_N \Phi + \Phi^2  \right)  .  
   \label{truncation-est-2nd-3-11}
\end{eqnarray}  
As a result, taking a Laplacian operator to the above terms, applying the nonlinear
expansion (\ref{truncation-est-2nd-3-3}), we arrive at 
\begin{eqnarray} 
  &&
  \left\|  F_{3e}^{k+1/2} - F_{3en}^{k+1/2}  \right\|_{L_h^2 (\Omega)}  \nonumber 
\\
  &\le& 
  C  \left(  \left\| \Phi_N  \right\|_{L^\infty (0,T; W^{2,\infty} (\Omega) )}^2 
  + \left\| \Phi  \right\|_{L^\infty (0,T; W^{2,\infty} (\Omega) )}^2  \right) 
  \cdot  \left\| \Phi_N - \Phi \right\|_{L^\infty (0,T; H^2 (\Omega) )}   \nonumber 
\\
  &\le& 
  C  h^2 \left\| \Phi \right\|_{L^\infty (0,T; H^4 (\Omega) )}^3 . 
   \label{truncation-est-2nd-3-12}
\end{eqnarray} 
Furthermore, a combination of (\ref{truncation-est-2nd-3-1}), 
(\ref{truncation-est-2nd-3-10}) and (\ref{truncation-est-2nd-3-12}) leads to the 
consistency estimate of the nonlinear term 
\begin{eqnarray} 
  &&
  \left\|  F_3  - F_{3en} \right\|_{L_s^2 (0,T; L_h^2 (\Omega) )}  \nonumber  
\\
  &\le& 
  C  \left( s^2 + h^2 \right) 
  \left( \left\| \Phi \right\|_{L^\infty (0,T; H^4 (\Omega) )}^3 
  + \left\| \Phi_N  \right\|_{L^\infty (0,T; H^4 (\Omega) )}^2 
  \cdot \nrm{\Phi_N}_{H^2 (0,T; H^2)}^2 \right) . 
   \label{truncation-est-2nd-3-13}
\end{eqnarray}

Therefore, the local truncation error estimate for $\tau_1$ is obtained, 
%(\ref{truncation-error}) 
by a combination of (\ref{truncation-est-2nd-2-3}), (\ref{truncation-est-2nd-1-5}), 
(\ref{truncation-est-2nd-4-6}), (\ref{truncation-est-2nd-6}), (\ref{truncation-est-2nd-5-6}), 
(\ref{truncation-est-2nd-3-13})
and a detailed comparison between the truncation equation 
(\ref{truncation-equation-1}), (\ref{truncation-equation-2}) and the original PDE: 
	\begin{eqnarray} 
& & \beta \partial_t^2 \Phi + \partial_t \Phi - \Delta \left( \Phi^3 \right) -  
(1 - \epsilon )  \Delta \Phi - 2   \Delta^2  \Phi - \Delta^3 \Phi   		\nonumber
	\\
& & \hspace{0.2in} = \beta F_{1en} + F_{2en} - F_{3en}  - ( 1 - \epsilon) F_{4en} 
  - F_{5en} - F_{6en} = 0  .  
	\end{eqnarray}
In addition, the constant estimate (\ref{truncation-error}) for $M$ is also satisfied, 
by a careful check. 

The estimate for $\tau_2$ is very similar. We denote the following quantity 
\begin{eqnarray} 
  F_{7e}^{k+1/2} = \left( \partial_t \Phi_N + \frac{s^2}{24} \partial_t^3  \Phi_N\right) 
  (\  \cdot\ , t^{k+1/2} )  .  \label{truncation-est-2nd-7-1}
\end{eqnarray}
A detailed Taylor formula in time gives the following estimate: 
\begin{eqnarray} 
  &&
  F_2^{k+1/2} - F_{7e}^{k+1/2} = \tau_{21}^{k+1/2} ,  
  \mbox{with}  \, \, \, \nonumber 
\\
  &&
  \left\|  \tau_{21}  \right\|_{L_s^2 (0,T)} 
  \le C s^3  \left\|  \Phi_N  \right\|_{H^4 (0,T)}  
   \le C s^3  \left\|  \Phi  \right\|_{H^4 (0,T)}  ,  \label{truncation-est-2nd-7-2}
\end{eqnarray}
at each fixed grid point $(i,j)$. Meanwhile, it is clear that $F_7$ has the following 
decomposition: 
\begin{eqnarray} 
   F_7^{k+1/2}  
   &=& \frac{ \Psi_N^{k+1} + \Psi_N^k}{2} 
   = \frac{ \partial_t \Phi_N^{k+1} + \partial_t \Phi_N^k}{2}   
   - \frac{s^2}{12}  \cdot   \frac{ \partial_t^3 \Phi_N^{k+1} + \partial_t^3 \Phi_N^k}{2}  
   \nonumber 
\\
  &:=& F_{7,1}^{k+1/2} + F_{7,2}^{k+1/2} . 
   \label{truncation-est-2nd-7-3}
\end{eqnarray}
To facilitate the analysis below, we define two more quantities: 
\begin{eqnarray} 
  &&
  F_{7e,1}^{k+1/2} = \left( \partial_t \Phi_N + \frac{s^2}{8} \partial_t^3  \Phi_N\right) 
  (\  \cdot\ , t^{k+1/2} ) ,  \nonumber 
\\
  &&
  F_{7e,2}^{k+1/2} = - \frac{s^2}{12} \partial_t^3  \Phi_N  (\  \cdot\ , t^{k+1/2} )  
   .  \label{truncation-est-2nd-7-4}
\end{eqnarray}
A detailed Taylor formula in time gives the following estimate: 
\begin{eqnarray} 
  &&
  F_{7,1}^{k+1/2} - F_{7e,1}^{k+1/2} = \tau_{22}^{k+1/2} ,   \quad 
  F_{7,2}^{k+1/2} - F_{7e,2}^{k+1/2} = \tau_{23}^{k+1/2}  ,  \quad 
  \mbox{with}  \, \, \, \nonumber 
\\
  &&
  \left\|  \tau_{22}  \right\|_{L_s^2 (0,T)} 
  \le C s^3  \left\|  \Phi_N  \right\|_{H^4 (0,T)}  
   \le C s^3  \left\|  \Phi  \right\|_{H^4 (0,T)}  ,  \label{truncation-est-2nd-7-5}
\\
  &&
  \left\|  \tau_{23}  \right\|_{L_s^2 (0,T)} 
  \le C s^3  \left\|  \Phi_N  \right\|_{H^4 (0,T)}  
   \le C s^3  \left\|  \Phi  \right\|_{H^4 (0,T)}  ,  \label{truncation-est-2nd-7-6}  
\end{eqnarray}
at each fixed grid point $(i,j)$. Consequently, a combination of 
(\ref{truncation-est-2nd-7-2})-(\ref{truncation-est-2nd-7-6}) shows that 
\begin{eqnarray} 
  F_2^{k+1/2} - F_{7}^{k+1/2} = \tau_2^{k+1/2} ,  
  \mbox{with}  \, \, \, 
  \left\|  \tau_{2}  \right\|_{L_s^2 (0,T)}  
   \le C s^3  \left\|  \Phi  \right\|_{H^4 (0,T)}  .  \label{truncation-est-2nd-7-7}
\end{eqnarray}
This in turn implies that 
\begin{eqnarray} 
  \left\| F_2 - F_{7} \right\|_{L_s^2 (0,T; L_h^2)} 
  \le  C s^3  \left\|  \Phi  \right\|_{H^4 (0,T; L^2)}  , \label{truncation-est-2nd-7-8}
\end{eqnarray}
which is exactly (\ref{truncation-equation-2}). Also, the constant $M$ associated with 
$\tau_2$ satisfies (\ref{truncation-error}). The consistency analysis is finished.

	\bibliographystyle{plain}
    \bibliography{mpfc_2nd.bib}
    
	\end{document}